\documentclass[12pt]{amsart} 

\makeatletter
\newcommand\mathcircled[1]{%
	\mathpalette\@mathcircled{#1}%
}
\newcommand\@mathcircled[2]{%
	\tikz[baseline=(math.base)] \node[draw,ellipse,inner sep=1pt] (math) {$\m@th#1#2$};%
}

\usepackage{multicol}
\usepackage{latexsym}
\usepackage{psfrag}
\usepackage{amsmath}
\usepackage{amssymb}
\usepackage{epsfig}
\usepackage{amsfonts}
\usepackage{amscd}
\usepackage{mathrsfs}
\usepackage{graphicx}
\usepackage{enumerate}
\usepackage[autostyle=false, style=english]{csquotes}
\MakeOuterQuote{"}
\usepackage{ragged2e}
\usepackage[all]{xy}
\usepackage{mathtools}

\usepackage{bbold}

\usepackage{mathbbol}

\newlength\ubwidth

\usepackage[dvipsnames]{xcolor}

\usepackage{placeins}
\usepackage{tikz}
\usetikzlibrary{shapes, positioning}
\usetikzlibrary{matrix,shapes.geometric,calc,backgrounds}
\usetikzlibrary{arrows.meta} 
\usetikzlibrary{shapes.geometric, arrows, positioning, decorations.pathreplacing} 

\newtheorem{theorem}{Theorem}

\newtheorem{prop}[theorem]{Proposition}

\newtheorem{defn}[theorem]{Definition}

\usepackage{amsmath,amssymb,amsthm}
\usepackage[dvipsnames]{xcolor}
\usepackage[colorlinks=true,citecolor=RoyalBlue,linkcolor=red,breaklinks=true]{hyperref}

\usepackage{graphicx}
\usepackage{mathdots} 
\usepackage[margin=2.5cm]{geometry}
\usepackage{ytableau}

\usepackage{cancel}

\usepackage{blkarray}
\usepackage{booktabs}

\def\doublestroke#1{\pdfliteral{1 Tr .3 w}#1\pdfliteral{0 Tr 0 w}}

\begin{document}

\title[Generating CMPP partitions for $k=1$]
{Combinatorial construction of known positive series for 
partition classes defined by Capparelli, Meurman, Primc, and Primc 
in the $k$=1 Case}

\author[Kur\c{s}ung\"{o}z]{Ka\u{g}an Kur\c{s}ung\"{o}z}
\address{Ka\u{g}an Kur\c{s}ung\"{o}z, Faculty of Engineering and Natural Sciences, 
    Sabanc{\i} University, Tuzla, Istanbul 34956, Turkey}
\email{kursungoz@sabanciuniv.edu}

\subjclass[2010]{05A17, 05A15, 11P84}

\keywords{CMPP partitions, partition generating function, Andrews-Gordon series}
      
\date{May 2026}

\dedicatory{Dedicated to Krishnaswami Alladi for his 70th birthday.}

\begin{abstract}
Recently, Capparelli, Meurman, A. Primc and M. Primc 
introduced a class of colored partitions 
which has since been called CMPP partitions. 
This generalized earlier work by M. Primc and \v{S}iki\'{c}, and by Trup\v{c}evi\'{c}.
One main reason why CMPP partitions are significant 
is the authors’ conjecture that the generating functions 
are infinite products in all cases. 
CMPP partitions are true extensions of the partition classes 
in the Rogers-Ramanujan-Gordon identities 
which are defined by difference conditions. 
As such, a natural question is to look for generating functions 
similar to the series side of Andrews-Gordon identities. 
Russell found such bivariate series for one case.  
These evidently positive series overlap with the 
positive series found earlier by Griffin, Ono and Warnaar in the edge cases.  
Russell used symbolic computation in the proofs. 
We will combinatorially interpret 
Russell’s bivariate series 
extending one case of the series due to Griffin, Ono and Warnaar
in a base partition and moves setting, 
and supply some missing cases, as well.
\end{abstract}

\maketitle

An (ordinary) integer partition $\lambda$ is an expression of a 
non-negative integer by an unordered sum of positive integers~\cite{A_thebluebook}.  
\begin{align*}
 n = \lambda_1 + \lambda_2 + \cdots + \lambda_m
\end{align*}
The parts $\lambda_i$'s may be written 
in non-increasing or non-decreasing order, 
and a limited number of zeros may be allowed 
to appear as parts.  
The number $n$ being partitioned is called the \emph{weight}, 
denoted by $\left\vert \lambda \right\vert$
and the number of parts $m$ is called the \emph{length} of the partition.  
For example, 4 has the following five partitions. 
\begin{align*}
  4, \quad
  1+3, \quad 
  2+2, \quad 
  1+1+2, \quad 
  1+1+1+1.
\end{align*}
Another notation for partitions is the frequency notation.  
A given partition is assigned a sequence 
$(f_1, f_2, \ldots )$ where each $f_i$ is 
the number of occurrences of the part $i$ in the partition.  
For example, $1+3$ has $f_1 = f_3 = 1$ 
and $f_i = 0$ for $i=2$ or $i \geq 4$.  
The weight of the partition in this notation is 
$\sum_{i \geq 1} i f_i$.  
\begin{align*}
  1+3 \quad \leftrightarrow \quad (1,0,1,0,0,0,\ldots)
\end{align*}
Multiple copies of parts may be considered.  
Then, parts with the same magnitude 
can be distinguished by assigning them colors, or indices.  

The abbreviated name CMPP partitions is coined 
after the paper by Capparelli, Meurman, A. Primc and M. Primc~\cite{CMPP_original}.  
The four authors elaborated on ideas by M. Primc and \v{S}iki\'{c}~\cite{PS16, PS19}, 
and Trup\v{c}evi\'{c}~\cite{T18}.  
It is convenient to describe CMPP partitions on a visualization~\cite{CMPP_original}.  

Take $\ell$ copies of the positive integers
arranged as below.  
\begin{align*}
  \begin{array}{cccccccc}
    1 & & 3 & & 5 & & 7 & \\
    & 2 & & 4 & & 6 & & 8 \\[5pt]
    & & & \vdots & & & & \\[5pt]
    1 & & 3 & & 5 & & 7 & \\
    & 2 & & 4 & & 6 & & 8 \\
    1 & & 3 & & 5 & & 7 & \\
    & 2 & & 4 & & 6 & & 8
  \end{array} \cdots
\end{align*}
Different apperances of the same number are considered as different parts.
One can associate colors or secondary indices with
the different copies of $\mathbb{Z}_+$.

Then, replace each part by its frequency, and incorporate 
non-negative integers $k_0$, $k_1$, \ldots, $k_\ell$ 
as \emph{initial conditions}.  
\begin{align}
\label{CMPPdiag}
  \begin{array}{cccccccccc}
    k_\ell & & f_1 & & f_3 & & f_5 & & f_7 & \\
    & \cdot & & f_2 & & f_4 & & f_6 & & f_8 \\[5pt]
    & & & & & \vdots & & & & \\[5pt]
    k_2 & & f_1 & & f_3 & & f_5 & & f_7 & \\
    & \cdot & & f_2 & & f_4 & & f_6 & & f_8 \\
    k_1 & & f_1 & & f_3 & & f_5 & & f_7 & \\
    & k_0 & & f_2 & & f_4 & & f_6 & & f_8
  \end{array} \cdots
\end{align}
The initial conditions do not change the weight of the partition.  
An important attribute of CMPP partitions is the integer 
\begin{align*}
 k = k_0 + k_1 + \cdots + k_\ell.  
\end{align*}

Next, consider sums along \emph{downward paths} 
on \eqref{CMPPdiag}.  
An example with $\ell = 3$ is displayed below.  
We take the frequencies vertically aligned with $k_0$ as zeros.  
\begin{center}
  \includegraphics[scale=0.15]{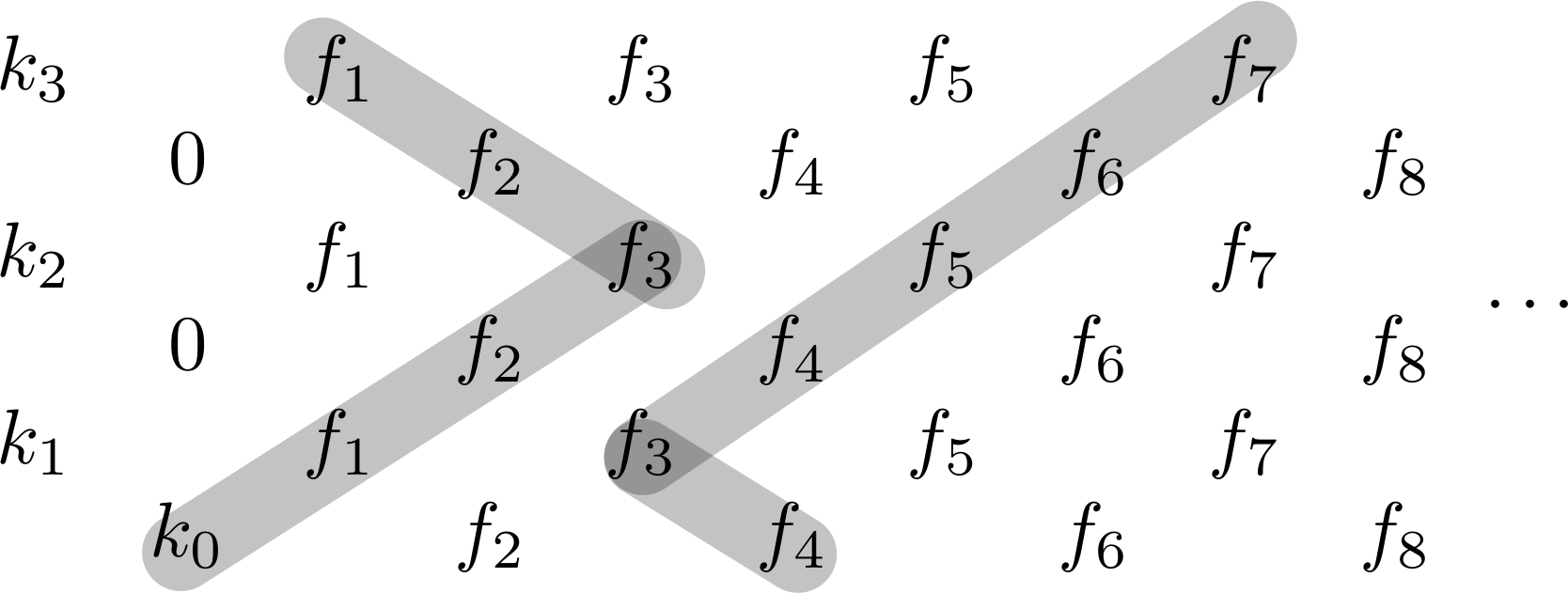}
\end{center}

For chosen and fixed non-negative $k_0$, $k_1$, \ldots $k_\ell$, 
with positive $k = k_0 + k_1 + \cdots + k_\ell$; 
a CMPP partition, 
namely an array of frequencies, 
is \emph{admissible} if each downward sum is at most $k$.  
We will denote admissible CMPP partitions by 
$\displaystyle \doublestroke{\lambda}$, 
$\displaystyle \doublestroke{\mu}$, etc.  

CMPP partitions~\cite{CMPP_original} are discovered in the context of 
affine Lie algebras and their representations~\cite{LW}. 
From an integer partition perspective, 
they are significant primarily due to their alleged generating functions.  
Capparelli, Meurman, A. Primc and M.Primc conjectured that the generating functions 
of CMPP partitions are infinite products.  
In fact, the aforementioned authors have several conjectures 
involving infinite products in the context of 
affine Lie algebras and their representations.  
Some of these infinite products are described in~\cite{CMPP_original}, 
and explicitly stated in~\cite{KRTW}.  
Please see~\cite{Russell_starting_point} for a more extensive literature review, 
and~\cite{CMPP_original, DK, JKS, PS16, PS19, PT25, Warnaar25} for further reading, 
proofs in some cases of, connections with, and extensions of these conjectures.  
The author claims no mastery what so ever 
in the theory of affine Lie algebras and their representations, 
and refers the reader to the contemporary experts in the field 
in addition to reading the above listed citations.

Recently, Russell established 
evidently positive multiple series generating functions 
for CMPP partitions for $k = 1$~\cite{Russell_starting_point}.  
The edge cases of these series were already known 
due to Griffin, Ono, and Warnaar~\cite{GOW}.  
In~\cite{GOW}, evidently positive series are given 
which were later found to be connected to CMPP partitions~\cite{CMPP_original}.  
In particular, the series in~\cite{GOW} cover the cases of
$[k, 0, \ldots, 0]-$ and $[0, \ldots, 0, k]-$admissible CMPP partitions 
for arbitrary positive $k$ with the following notation.  
For chosen and fixed $i = $ 1, 2, \ldots, $\ell$, 
set $k_i = 1$ and the other $k_\cdot$'s $ = 0$.  
Call the number of $[k_0, k_1, \ldots, k_\ell]-$admissible CMPP partitions 
of $n$ with $j$ parts $F(i,j,n)$.  
Then, set 
\begin{align*}
  P_i(z) := P_i(z; q) := \sum_{ j,n \geq 0 } F(i,j,n) \; q^n \; z^j.  
\end{align*}

We recall the standard notation.  
The $q$-Pochhammer symbol for parameter $a$ and base $q$ is defined as 
\begin{align*}
 (a; q)_n = \prod_{j = 1}^n \left( 1 - a q^{j-1} \right), 
 \quad 
 (a; q)_\infty = \lim_{n \to \infty} (a; q_n), 
\end{align*}
and the infinite product converges absolutely 
for $\left\vert q \right\vert < 1$~\cite{GR}.  

\begin{theorem}[\cite{Russell_starting_point} and partly~\cite{GOW}]
\label{thmRussellMain}
  For arbitrary but fixed $i=$ 0, 1, 2, \ldots, $\ell$, 
  set $k_i = 1$ and the rest of the $k_\cdot$'s $=0$.  
  Then, 
  \begin{align}
  \label{eqRussellMain} 
    P_i(z) = \sum_{ n_1, n_2, \ldots, n_\ell \geq 0} 
      \frac{ q^{ N_1^2 + N_2^2 + \cdots 
          + N_\ell^2 + L_{\ell, i}(N_1, N_2, \ldots, N_\ell) } \; z^{N_1} }
        { (q; q)_{n_1} (q; q)_{n_2} \ldots (q; q)_{n_\ell} }, 
  \end{align}
  where $N_s = n_s + n_{s+1} + \cdots + n_\ell$ 
  for $s = $ 1, 2, \ldots, $\ell$, 
  and the linear forms $L_{\ell, i}(N_1, N_2, \ldots, N_\ell)$ follow the pattern below.  
  {\allowdisplaybreaks \begin{align*}
    L_{\ell, 0}(N_1, N_2, \ldots, N_\ell) & = N_1 + N_2 + \cdots + N_\ell \\ 
    L_{\ell, \ell}(N_1, N_2, \ldots, N_\ell) & = N_2 + N_3 + \cdots + N_\ell \\ 
    L_{\ell, 1}(N_1, N_2, \ldots, N_\ell) & = N_3 + N_4 + \cdots + N_\ell \\ 
    L_{\ell, \ell-1}(N_1, N_2, \ldots, N_\ell) & = N_4 + N_5 + \cdots + N_\ell \\ 
    & \vdots
  \end{align*}}
\end{theorem}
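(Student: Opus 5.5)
The series in \eqref{eqRussellMain} has the familiar Andrews--Gordon shape, so we will prove it with a base partition and moves bijection in the style of the author's earlier combinatorial treatments of Rogers--Ramanujan--Gordon series. First, though, we recast admissibility for $k=1$ concretely. With $k=1$, every downward path sum on \eqref{CMPPdiag} is at most $1$, so each frequency is $0$ or $1$. The admissible partitions are then exactly the sets of colored parts (a part $m$ in row $r$, $1\le r\le \ell$) in which no two parts lie on a common downward path. The initial condition $k_i=1$ forbids any part lying on a downward path through the $i$-th initial entry. The first step is to write this out as explicit pairwise difference conditions. For parts $m<m'$ in rows $r,r'$, we expect a condition of the form $m'-m\ge 2$, or $m'-m=1$ with a prescribed relation between $r$ and $r'$ (the row of the larger part must lie on the "other side" of the smaller one along the zig-zag), together with a lower bound on the smallest allowed part in each row that depends on $i$.

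Next, for fixed $N_1\ge N_2\ge\cdots\ge N_\ell$, with $n_s=N_s-N_{s+1}$ and $N_1$ the number of parts, we build a \emph{base partition}. This is the unique admissible partition of minimal weight in which $n_s$ parts carry a "color level" $s$, meaning they are packed as tightly as admissibility allows using the $s$ lowest feasible row-slots. The key computation is to show that the base partition has weight
\begin{align*}
N_1^2+\cdots+N_\ell^2+L_{\ell,i}(N_1,\ldots,N_\ell).
\end{align*}
We will do this by induction on $\ell$, peeling off the level-$\ell$ parts: they contribute $N_\ell^2$ plus a linear term, and the remaining configuration is a base partition for $\ell-1$ rows with shifted initial condition. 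This explains the alternating pattern $i=0,\ell,1,\ell-1,\ldots$ in $L_{\ell,i}$, since each peeling reflects the diagram and moves the initial condition by one position. We will check the small cases $\ell=1$ (the Rogers--Ramanujan series) and $\ell=2$ by hand to pin down the linear forms.

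Then we define \emph{forward moves} on the $n_s$ parts of level $s$. The largest part of that level is moved forward $\mu^{(s)}_1$ times, the next one $\mu^{(s)}_2\le\mu^{(s)}_1$ times, and so on. Each move increases the weight by exactly one and preserves admissibility. When a moving part meets a part of a different level, the two "jump over" each other via a local recoloring that keeps the total weight increment at one. This gives the factor $1/(q;q)_{n_s}$, the generating function of partitions into at most $n_s$ parts, for each level independently. We then prove that the map (base partition, $\mu^{(1)},\ldots,\mu^{(\ell)}$) $\mapsto$ admissible partition is a bijection. The inverse uses backward moves: repeatedly identify the smallest part that can be retracted, and determine the level decomposition of an arbitrary admissible partition greedily, for instance by reading maximal chains of consecutive, tightly packed parts.

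We expect the main obstacle to be the crossing rules when parts of different levels pass each other, near the small parts where the initial condition $k_i$ acts. These rules must be defined so that every move adds exactly $1$, stays admissible, and is uniquely reversible. Our first attempt will be to define the level of a part intrinsically, as the length of the maximal "tight cluster" containing it, in analogy with the Andrews--Gordon base-partition proofs. We will then verify case by case that a cluster of size $s$ can always be dissolved or re-formed by a single unit move, treating separately the clusters abutting the smallest part, where $i$ enters.
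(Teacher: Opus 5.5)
\textbf{There is a genuine gap.} Your overall architecture is the paper's: a minimal base partition for each $(n_1,\dots,n_\ell)$ of weight $N_1^2+\cdots+N_\ell^2+L_{\ell,i}$, weight-one moves on the level-$s$ parts recorded by a partition into at most $n_s$ parts, and backward moves for the inverse. What is missing is the ingredient everything rests on: an intrinsic, move-invariant level attached to each \emph{single} part of an arbitrary admissible partition. The exponent of $z$ is $N_1=n_1+\cdots+n_\ell$, so each part is one object of one level.

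Your candidate, the length of the maximal tight cluster, already fails on one-part partitions. For $i=0$ the coefficient of $z$ in \eqref{eqRussellMain} is $\sum_{s=1}^{\ell}q^{2s}/(1-q)$. Correspondingly, a lone part $m$ in the $c$-th copy is admissible exactly when $m\ge 2c$. So the singletons of weight $m$ must be spread one each over levels $1,\dots,\min(\ell,\lfloor m/2\rfloor)$; naturally a lone part in copy $s$ has level $s$. Your definition puts every singleton at level one, which contradicts the factor $q^2/(1-q)$ for level one.

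The paper's replacement is the \emph{relative height}. For $h=0,1,\dots,\ell-1$ in turn, a part receives relative height $h$ if exactly $h$ entries lie directly below it and above both the right leg of its current predecessor (possibly $k_i$, $i\ge 1$) and the left leg of its current successor. The diagram is then folded along $n\pm(h+1)$ to hide that part. Moves are built to preserve these labels: the part goes to whichever of the two diagonally adjacent copies of $n+1$ keeps the relative heights of $n$ and its neighbours. When $n$ meets a level $n+2h+2$ of larger relative height, the two labels are swapped rather than the parts jumping over each other. This is what makes the counts $n_s$ invariant, the base partition unique, and the backward-move counts non-increasing. Without some such invariant, none of these steps of your plan can be carried out.

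Two further points.

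First, your expected pairwise conditions are wrong. In the $2\ell$-row array, copy $c$ occupies row $2c-1$ (odd parts) and row $2c-2$ (even parts). Parts $m<m'$ in rows $y,y'$ are then compatible if and only if $m'-m>|y-y'|$, so the required gap grows with vertical distance. For instance, when $\ell\ge 2$, the part $2$ in copy $1$ and the part $4$ in copy $2$ already lie on a common downward path. The conditions are therefore not of the form ``$m'-m\ge 2$, or $m'-m=1$ with a side condition'', and Gordon-style tight clusters of consecutive parts are not the right picture.

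Second, the weight computation by peeling off the top level and reflecting the diagram is only a heuristic. The paper computes it directly. In a base partition, smaller relative heights come first, and parts of relative height $h$ are level and spaced $2h+2$ apart. The smallest part of each relative height is read off from the right arm and right leg of $k_i$. The relative height rises by one per unit step until the part reaches the top copy or passes the end of that right leg, and by one per two units afterwards. This is exactly what produces the pattern $L_{\ell,0},L_{\ell,\ell},L_{\ell,1},L_{\ell,\ell-1},\dots$.
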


The lineup of the indices of the initial conditions 
is different from Russell's~\cite{Russell_starting_point}, 
and the reason for it will be apparent in the proof of Theorem \ref{thmRussellMain} 
in Section \ref{secProofs}.  

Russell first found $q$-contiguous equations satisfied by the series 
\eqref{eqRussellMain}, 
then proved Theorem \ref{thmRussellMain} by showing that the multiple series 
on the right hand side of \eqref{eqRussellMain} satisfies those functional equations.  

CMPP partitions are significant from a partition theoretic perspective 
also because they are natural extensions of Rogers-Ramanujan-Gordon identities 
in at least two ways.  
At this point, we need to make a connection to 
Rogers-Ramanujan-Gordon identities~\cite{A_RRG, RRG}, 
and Andrews-Gordon identities~\cite{AG}.  

\begin{theorem}[Rogers-Ramanujan-Gordon Identities~\cite{RRG}]
\label{thmRRG}
  Choose and fix positive integers $\ell$ and $a$ 
  such that $a \leq \ell+1$.  
  Let $A_{\ell+1,a}(n)$ be the number of (ordinary) partitions of $n$ 
  in which no part is congruent to 0 or $\pm a$ $(\mathrm{mod} (2\ell+3))$.  
  Let $B_{\ell+1,a}(n)$ be the number of (ordinary) partitions of $n$ 
  in which $f_j + f_{j+1} \leq \ell$ for all positive $j$ and $f_1 < a$.  
  Then, 
  \begin{align*}
    A_{\ell+1,a}(n) = B_{\ell+1,a}(n)
  \end{align*}
  for any non-negative integer $n$.  
\end{theorem}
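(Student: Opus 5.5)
We would prove Theorem \ref{thmRRG} through the Andrews--Gordon series, which is the bridge the paper is setting up. The plan has two halves.

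First, identify the product side. The generating function of $A_{\ell+1,a}(n)$ is
\begin{align*}
  \sum_{n\ge 0} A_{\ell+1,a}(n)\, q^n
  = \prod_{\substack{m \ge 1 \\ m \not\equiv 0, \pm a \ (\mathrm{mod}\ 2\ell+3)}} \frac{1}{1-q^m}.
\end{align*}
By the Jacobi triple product with modulus $2\ell+3$, this equals
\begin{align*}
  \frac{1}{(q;q)_\infty}\sum_{n\in\mathbb{Z}} (-1)^n q^{\frac{(2\ell+3)n^2+(2\ell+3-2a)n}{2}}.
\end{align*}

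Second, treat the partition side with Andrews' $q$-difference method. For $1\le a\le \ell+1$, let $J_{\ell+1,a}(x;q)$ be the generating function, with $x$ marking the number of parts, for partitions with $f_j+f_{j+1}\le \ell$ for all $j$ and $f_1<a$. Conditioning on $f_1$ and on whether the part $2$ is constrained, and then shifting every part down by one (so that $q$ picks up $xq$), gives the recurrences
\begin{align*}
  J_{\ell+1,a}(x) - J_{\ell+1,a-1}(x) = (xq)^{a-1} J_{\ell+1,\ell+2-a}(xq),
\end{align*}
with $J_{\ell+1,0}=0$. Together with $J_{\ell+1,a}(0)=1$ and analyticity in $x$, these determine the family uniquely.

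We then exhibit one explicit family satisfying the same recurrences. The natural candidate is Andrews' function
\begin{align*}
  H_{\ell+1,a}(x) = \sum_{n\ge 0} \frac{x^{(\ell+1)n} q^{(\ell+1)n^2+n-(a-1)n}\,(1-x^{a}q^{2na})\,(xq^{n+1};q)_\infty\,(-1)^n q^{-\binom{n}{2}}}{(q;q)_n\,(xq^n;q)_\infty}\Big/(xq;q)_\infty\text{-normalized},
\end{align*}
or, equivalently, the Andrews--Gordon multisum
\begin{align*}
  \sum_{n_1,\dots,n_\ell\ge0}\frac{x^{N_1+\cdots+N_\ell} q^{N_1^2+\cdots+N_\ell^2+N_a+\cdots+N_\ell}}{(q;q)_{n_1}\cdots(q;q)_{n_\ell}},
\end{align*}
where $N_s$ is as in Theorem \ref{thmRussellMain}. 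Checking the recurrences termwise is routine algebra for the alternating-series form. Uniqueness then gives $J=H$.

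Finally, set $x=1$. The alternating series collapses to the theta series above divided by $(q;q)_\infty$, and comparing coefficients yields $A_{\ell+1,a}(n)=B_{\ell+1,a}(n)$.

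The main obstacle is getting the exact recurrence and the correct candidate function to match, with the right exponent shifts in $a$. I would first verify the case $\ell=1$, which is Rogers--Ramanujan, to calibrate the shifts. Alternatively, one could bypass the recurrence and build the multisum combinatorially by base partitions and moves, in the spirit the paper intends: the minimal configuration with $n_s$ "$s$-clusters" contributes $q^{N_1^2+\cdots+N_\ell^2+N_a+\cdots+N_\ell}$, and independent forward moves of the clusters contribute $1/(q;q)_{n_s}$. This would then be combined with the product side via the Andrews--Gordon identity.
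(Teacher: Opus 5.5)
The paper gives no proof of Theorem \ref{thmRRG}; it is quoted from \cite{RRG} as background. Your outline is the analytic proof of \cite{A_RRG}, and most of it is right: the product side (Jacobi triple product, with the exponent you wrote), the recurrence $J_{\ell+1,a}(x)-J_{\ell+1,a-1}(x)=(xq)^{a-1}J_{\ell+1,\ell+2-a}(xq)$ with $J_{\ell+1,0}=0$, and the uniqueness claim are all correct.

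\textbf{The gap.} It lies in the step that carries the analytic content: your displayed $H_{\ell+1,a}$ is not a solution of the recurrence. Compared with the function that works, it has four errors:
\begin{itemize}
\item a spurious factor $(xq^{n+1};q)_\infty$ in the numerator;
\item $q^{-\binom{n}{2}}$ instead of $q^{\binom{n}{2}}$;
\item $q^{n-(a-1)n}$ instead of $q^{n-an}$;
\item you identify $J$ with $H(x)$ rather than $H(xq)$.
\end{itemize}
This is not cosmetic. Up to your normalizing factor, the $n=0$ term of your $H$ is $(1-x^a)(xq;q)_\infty/(x;q)_\infty=1+x+\cdots+x^{a-1}$. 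At $x=1$, every $n\ge1$ term is divisible by a positive power of $q$. So for $a\ge2$ your $x=1$ specialization has constant term $a\neq 1=B_{\ell+1,a}(0)$. Hence \emph{checking the recurrences termwise} would fail, and the proof as proposed does not go through.

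\textbf{The correct solution.} What you need is
\begin{align*}
  H_{\ell+1,a}(x)=\sum_{n\ge 0}\frac{(-1)^n x^{(\ell+1)n}q^{(\ell+1)n^2+\binom{n}{2}+n-an}\left(1-x^{a}q^{2na}\right)}{(q;q)_n\,(xq^n;q)_\infty},
  \qquad
  J_{\ell+1,a}(x)=H_{\ell+1,a}(xq),
\end{align*}
together with the identity
\begin{align*}
  H_{\ell+1,a}(x)-H_{\ell+1,a-1}(x)=x^{a-1}H_{\ell+1,\ell+2-a}(xq).
\end{align*}
This identity is not termwise; it is proved as follows.
\begin{itemize}
\item Factor $(-1)^nx^{(\ell+1)n}q^{(\ell+1)n^2+\binom{n}{2}+n}/\bigl((q;q)_n(xq^n;q)_\infty\bigr)$ out of the $n$-th term of the left side. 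What remains is $q^{-an}(1-q^n)+x^{a-1}q^{(a-1)n}(1-xq^n)$.
\item In the first piece, cancel $1-q^n$ against $(q;q)_n$ and shift $n\to n+1$.
\item In the second piece, cancel $1-xq^n$ against $(xq^n;q)_\infty$.
\item The two pieces are then the two halves of $x^{a-1}H_{\ell+1,\ell+2-a}(xq)$ coming from the factor $1-x^{\ell+2-a}q^{(\ell+2-a)(2n+1)}$.
\end{itemize}
Replacing $x$ by $xq$ gives your recurrence, and $H_{\ell+1,0}=0$ and $J_{\ell+1,a}(0)=1$ are immediate. At $x=1$ one has $(q;q)_n(q^{n+1};q)_\infty=(q;q)_\infty$. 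The two halves of $1-q^{a(2n+1)}$ then give the $n\ge0$ and $n\le-1$ terms of your theta series.

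\textbf{Your fallbacks.} Neither closes the gap by itself.
\begin{itemize}
\item That the Andrews--Gordon multisum satisfies the same recurrences is a theorem in its own right (the $q$-difference proof mentioned after Theorem \ref{thmAG}). Even granting it, the multisum at $x=1$ is not visibly a theta quotient, so you still need the alternating form above to reach the product.
\item The base-partition-and-moves route only gives Theorem \ref{thmAG}, i.e.\ that the $B_{\ell+1,a}$-partitions are generated by the multisum. You would still need an independent proof that the multisum at $z=1$ equals the product.
\end{itemize}
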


CMPP partitions are Rogers-Ramanujan-Gordon partitions for $\ell = 1$, 
as noted in~\cite{KRTW}.  

\begin{theorem}[Andrews-Gordon Identities~\cite{AG}]
\label{thmAG}
  Choose and fix positive integers $\ell$ and $a$ 
  such that $1 \leq a \leq \ell+1$.  
  Let $b_{\ell+1,a}(m, n)$ be the number of partitions 
  enumerated by $B_{\ell+1,a}(n)$ 
  in which the number of parts is exactly $m$.  
  Then, 
  \begin{align}
  \label{eqAG}
    \sum_{ m, n \geq 0 } b_{\ell+1,a}(m, n) \; z^m \; q^n
    = \sum_{ n_1, n_2, \ldots, n_{\ell} \geq 0} 
      \frac{ q^{ N_1^2 + N_2^2 + \cdots 
          + N_\ell^2 + N_a + N_{a+1} + \cdots + N_{\ell} } 
            \; z^{N_1 + N_2 + \cdots + N_\ell} }
        { (q; q)_{n_1} (q; q)_{n_2} \ldots (q; q)_{n_\ell} }, 
  \end{align}
  where $N_s = n_s + n_{s+1} + \cdots + n_\ell$ 
  for $s = $ 1, 2, \ldots, $\ell$.  
\end{theorem}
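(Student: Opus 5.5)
We would prove Theorem~\ref{thmAG} combinatorially, in the same base partition and moves framework used later for Theorem~\ref{thmRussellMain}. The goal is a weight- and length-preserving bijection between two sets. The first set is the partitions enumerated by $b_{\ell+1,a}(m,n)$. The second set consists of triples made of a vector $(n_1,\ldots,n_\ell)$ of non-negative integers with $N_1+\cdots+N_\ell=m$, together with, for each $s$, an ordinary partition into at most $n_s$ parts. This second set is clearly generated by the right hand side of \eqref{eqAG}, since $1/(q;q)_{n_s}$ counts partitions into at most $n_s$ parts.

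First, we will fix $(n_1,\ldots,n_\ell)$. We construct the base partition as the unique partition of minimal weight satisfying $f_j+f_{j+1}\le \ell$ and $f_1<a$ that consists of $n_s$ \emph{$s$-clusters} for each $s$. Here an $s$-cluster is a block of $s$ parts supported on two consecutive integers $j,j+1$, which saturates the local difference condition. Concretely, we place the $\ell$-clusters at the bottom, then the $(\ell-1)$-clusters, and so on. Adjacent clusters are spaced so that the sum of frequencies at any two consecutive integers never exceeds $\ell$. We will verify by induction on $\ell$ that the weight of this base partition is $N_1^2+\cdots+N_\ell^2+N_a+\cdots+N_\ell$ and that its number of parts is $\sum_s s\,n_s=N_1+\cdots+N_\ell$. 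For the induction, peeling off the largest clusters reduces to the case $\ell-1$ with shifted parts, and the shift accounts for the squares. The term $N_a+\cdots+N_\ell$ comes from the shift needed to make $f_1<a$: the clusters of size at least $a$ cannot start at $1$.

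Next, we define \emph{forward moves}. Moving the $t$-th largest $s$-cluster forward increases the weight by exactly $1$, and it preserves the cluster type and the conditions $f_j+f_{j+1}\le\ell$, $f_1<a$. Encoding the total number of moves applied to the $s$-clusters as a partition into at most $n_s$ parts, with the largest cluster moving at least as far as the next one, yields the factor $1/(q;q)_{n_s}$. To show the map is a bijection, we will describe the inverse: given an admissible partition, we greedily extract the clusters from the largest part downward. We define the cluster sizes canonically so that the vector $(n_1,\ldots,n_\ell)$ is recovered uniquely. Then we move the clusters back in order until the base partition is reached, recording the numbers of moves.

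The main obstacle will be the interaction of clusters of different sizes. When a smaller cluster is moved past a larger one, the naive shift violates $f_j+f_{j+1}\le\ell$. The move then has to be redefined as an adjustment that swaps the relative positions of the two clusters while changing the total weight by exactly one and keeping both cluster sizes intact. We would first try the Kur\c{s}ung\"{o}z-style rule: when an $s$-cluster would collide with an $r$-cluster with $r>s$, it "jumps over" it, and the parts in the two adjacent columns are redistributed by an explicit local rearrangement. We would then check case by case, over the relative frequency patterns of the two columns involved, that this rearrangement is invertible and respects the ordering of the moves. The condition $f_1<a$ only needs separate care for the backward moves near $1$, where it is the source of the linear term $N_a+\cdots+N_\ell$.
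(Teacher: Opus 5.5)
The paper does not prove Theorem \ref{thmAG}. It quotes the result from \cite{AG} and only remarks that Andrews' proof is analytic, via $q$-difference equations. Your combinatorial route is therefore genuinely different. It is the same base-partition-and-moves template the paper uses for CMPP partitions, and it is viable in principle. As written, however, it has a genuine gap: the two ingredients that carry all the content are only announced.

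The first missing ingredient is a canonical labelling. You never say how an arbitrary admissible partition is split into clusters. Without that, neither $(n_1,\ldots,n_\ell)$ nor the inverse map is defined, and ``the minimal partition consisting of $n_s$ $s$-clusters'' is not yet well defined. Your definition of an $s$-cluster is also off. For $s<\ell$, $s$ parts on $\{j,j+1\}$ do not saturate $f_j+f_{j+1}\le\ell$. Even saturating clusters overlap: for $\ell=2$, both $\{1,2\}$ and $\{2,3\}$ qualify in $1+2+3$. You need an explicit rule playing the role that the relative-height algorithm of Subsection \ref{subsecRelHghtDeterm}, made well defined by Proposition \ref{propAfterRoundH}, plays in the paper. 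For Gordon partitions the natural choice is a Gordon-type marking:
\begin{itemize}
\item process the parts from smallest to largest;
\item give each occurrence of $j$ the least mark not already used on $j-1$ or $j$.
\end{itemize}
Then every mark is at most $\ell$, the number $N_r$ of parts marked $r$ is non-increasing in $r$, and $n_r=N_r-N_{r+1}$. You must then show that your moves preserve this labelling.

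The second missing ingredient is the collision rule, which you leave as something you ``would first try''. It is the heart of the bijection, and it is not a local repair. Take $\ell=2$, $a=3$, $(n_1,n_2)=(1,1)$. The base partition is $1+1+3$, and the weight-$7$ partitions with this vector are exactly $1+1+5$, $1+2+4$ and $1+3+3$. With naive shifts whenever nothing collides, two moves of the $1$-cluster give $1+1+5$, and one move of each cluster gives $1+2+4$ in either order. Bijectivity then forces the second move of the $2$-cluster to be $1+2+3\mapsto 1+3+3$: the $2$-cluster jumps from $\{1,2\}$ to $\{3,3\}$ while the $1$-cluster drops from $3$ to $1$. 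Which kinds of jumps occur (larger over smaller, smaller over larger, or both) depends on the order in which the different cluster sizes are moved, and you do not fix that order. To turn the plan into a proof you must:
\begin{itemize}
\item fix that order;
\item define the jump of an $s$-cluster across an $r$-cluster for every intermediate shape $j^{s-u}(j+1)^{u}$ and every relative position, including near $1$ when $a\le\ell$;
\item show that each move changes the weight by exactly $1$, preserves $f_j+f_{j+1}\le\ell$, $f_1<a$ and the labelling, and is invertible;
\item prove the analogue of Proposition \ref{propPossibleMoves} together with the reversibility used in the proof of Theorem \ref{thmRussellMain}.
\end{itemize}
The last item means showing that the prescribed non-increasing numbers of moves can always be performed, and that greedy backward moves stop exactly at your base partition with non-increasing move counts. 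None of this is supplied, so the proposal is currently an outline rather than a proof.
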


Andrews also proved Theorem \ref{thmAG} by showing that 
the multiple series on the right hand side of \eqref{eqAG} satisfy 
certain $q$-contiguous equations.  
Russell's series and Andrews' series are the same for $z = 1$.  
But, the functional equations they satisfy are different.  
Moreover, Andrews' proof of Theorem \ref{thmAG} is 
traceable by hand, while Russell's proof requires 
symbolic computational assistance.  

Russell found evidently positive multiple series 
generating functions for some CMPP partition variants, 
as well~\cite{Russell_starting_point}.  
These variants also appear in~\cite{CMPP_original}.  
We have some missing cases, 
and present the generating functions in two different theorems.  
The first variant is obtained by deleting 
the top row of odd numbers in \eqref{CMPPdiag}. 

\begin{align}
\label{CMPPdiagAlt1}
  \begin{array}{cccccccccc}
    k_\ell & & \times & & \times & & \times & & \times & \\
    & \cdot & & f_2 & & f_4 & & f_6 & & f_8 \\[5pt]
    & & & & & \vdots & & & & \\[5pt]
    k_2 & & f_1 & & f_3 & & f_5 & & f_7 & \\
    & \cdot & & f_2 & & f_4 & & f_6 & & f_8 \\
    k_1 & & f_1 & & f_3 & & f_5 & & f_7 & \\
    & k_0 & & f_2 & & f_4 & & f_6 & & f_8
  \end{array} \cdots
\end{align}

We call the number of $[k_0, k_1, \ldots, k_\ell]-$admissible CMPP partitions 
on diagram \eqref{CMPPdiagAlt1}
of $n$ with $j$ parts $F^{\ast}(i,j,n)$.  
We set 
\begin{align*}
  P^{\ast}_i(z) := P^{\ast}_i(z; q) := \sum_{ j,n \geq 0 } F^{\ast}(i,j,n) \; q^n \; z^j.  
\end{align*}

\begin{theorem}
\label{thmBSLT1}[partly~\cite{Russell_starting_point}]
  For arbitrary but fixed $i=$ 0, 1, 2, \ldots, $\ell$, 
  set $k_i = 1$ and the rest of the $k_\cdot$'s $=0$.  
  Then, 
  \begin{align}
  \label{eqRussellAlt1} 
    P^{\ast}_i(z) = \sum_{ n_1, n_2, \ldots, n_\ell \geq 0} 
      \frac{ q^{ N_1^2 + N_2^2 + \cdots 
          + N_\ell^2 + L^{\ast}_{\ell, i}(N_1, N_2, \ldots, N_\ell) } \; z^{N_1} }
        { (q; q)_{n_1} (q; q)_{n_2} \ldots (q; q)_{n_{\ell-1}} (q^2; q^2)_{n_\ell} }, 
  \end{align}
  where $N_s = n_s + n_{s+1} + \cdots + n_\ell$ 
  for $s = $ 1, 2, \ldots, $\ell$, 
  and the linear forms $L^{\ast}_{\ell, i}(N_1, N_2, \ldots, N_\ell)$ 
  follow the pattern below.  
  {\allowdisplaybreaks \begin{align*}
    L^{\ast}_{\ell, 0}(N_1, N_2, \ldots, N_\ell) 
      & = N_1 + N_2 + \cdots + N_\ell \\ 
    L^{\ast}_{\ell, \ell}(N_1, N_2, \ldots, N_\ell) 
      & = N_2 + N_3 + \cdots + N_\ell \; + \; N_1 \\ 
    L^{\ast}_{\ell, 1}(N_1, N_2, \ldots, N_\ell) 
      & = N_3 + N_4 + \cdots + N_\ell \\ 
    L^{\ast}_{\ell, \ell-1}(N_1, N_2, \ldots, N_\ell) 
      & = N_4 + N_5 + \cdots + N_\ell \; + \; N_2 \\ 
    & \vdots 
  \end{align*}}
\end{theorem}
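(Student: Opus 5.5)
We prove Theorem \ref{thmBSLT1} with a base partition and moves construction, the same combinatorial approach as for Theorem \ref{thmRussellMain}. We read the summand of \eqref{eqRussellAlt1} as the generating function of pairs consisting of a \emph{base partition} and a collection of \emph{moves}. Fix $(n_1,\dots,n_\ell)$. For each $s$, the factor $1/(q;q)_{n_s}$ with $s<\ell$ counts $n_s$ non-increasing displacements, that is, partitions into at most $n_s$ parts. The factor $1/(q^2;q^2)_{n_\ell}$ counts $n_\ell$ displacements by even amounts. The numerator $q^{N_1^2+\cdots+N_\ell^2+L^{\ast}_{\ell,i}}z^{N_1}$ should be the weight of a unique minimal admissible array on diagram \eqref{CMPPdiagAlt1} with $N_1$ parts.

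First, we describe the base partitions. On the diagram, a $k=1$ admissible array is a sequence of parts that climb upward along diagonals. We group the parts into ``blocks'' of heights $1,\dots,\ell$: $n_s$ blocks of height $s$, where a block of height $s$ is a maximal downward-path chain of $s$ parts occupying consecutive rows at consecutive integers. Since only $N_1=n_1+\cdots+n_\ell$ appears in $z$, each block should count as one part. This means a height-$s$ block is a single part sitting on the $s$-th level of the diagram, with level $s$ corresponding to the $s$-th copy. We then pack the blocks as tightly as the admissibility condition allows, tallest blocks first, smallest magnitudes first. A direct sum over the packing should give $\sum_s N_s^2$. The initial condition $k_i=1$ forbids the first few positions, and this shift produces $L^{\ast}_{\ell,i}$. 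The alternating order $0,\ell,1,\ell-1,\dots$ reflects which end of the diagram the initial $1$ blocks. In the starred variant, the top row of odd numbers is deleted. So a top-level block can only sit on even positions, which explains $(q^2;q^2)_{n_\ell}$. It also explains the extra $+N_1$, $+N_2,\dots$ in $L^{\ast}_{\ell,\ell}$, $L^{\ast}_{\ell,\ell-1},\dots$: when $k_\ell$ (or the corresponding neighbouring initial condition) is the nonzero one, the first odd slot is lost, and every block shifts by one.

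Second, we define the moves. A height-$s$ block is moved forward one unit at a time, with the largest displacement assigned to the block farthest out. When a block would collide with a neighbouring block of different height, we use an \emph{adjustment} step: the two blocks exchange shape locally, for instance a height-$s$ block passing a height-$(s+1)$ block re-forms the pair, and the weight increases by exactly one. Height-$\ell$ blocks live only on even diagonals of the top level, so they move in steps of $2$. We must check three things: each move preserves admissibility (every downward sum stays at most $1$), each move increases the weight by exactly $1$ (or by $2$ at the top level), and moves on different heights commute after the adjustments.

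Third, we prove bijectivity. We need an inverse map that reads off the $n_s$ from an arbitrary admissible array and undoes the moves. We decompose the array greedily into maximal chains and then slide the blocks backward, largest first, until the base configuration is reached. The main obstacle is this step together with the collision and adjustment rules. We must show that the backward procedure is well defined and that the decomposition into blocks is unique in the presence of the initial condition and the missing top odd row. We would first establish it for $\ell=1,2$ by hand, where the top-level rule reduces to known results such as Theorem \ref{thmAG}-type Rogers--Ramanujan partitions with even parts. We would then argue by induction on $\ell$, treating the top level as a layer attached to a $(\ell-1)$-configuration whose admissibility constraints are unchanged.
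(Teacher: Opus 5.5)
Your proposal has a genuine gap at its first step: the statistic that is supposed to produce $n_1,\dots,n_\ell$ is not the right one.

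\textbf{The blocks and the level statistic.} Your blocks cannot exist for $k=1$. Every downward sum is at most $1$, so no downward path contains two parts, and a \emph{downward-path chain of $s$ parts} is impossible for $s\ge 2$. That picture belongs to the Andrews--Gordon/Bressoud setting, where the $z$-exponent is $N_1+\cdots+N_\ell$. Your fallback is to take $n_s$ to be the number of parts in the $s$-th copy, i.e.\ of absolute height $s-1$. This gives the wrong refinement, so no packing-plus-moves argument built on it can reproduce the summands of \eqref{eqRussellAlt1}.

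Concretely, take $\ell=2$, $k_2=1$ on diagram \eqref{CMPPdiagAlt1}, and a single part. The downward path $k_2,\cdot,f_1,f_2$ (the last two in the first copy) forbids $1$ and $2$ in the first copy, while nothing in the second copy is forbidden. So by levels you get $zq^3/(1-q)$ for the first copy and $zq^2/(1-q^2)$ for the second copy (parts $2,4,6,\dots$). The summands for $(n_1,n_2)=(1,0)$ and $(0,1)$ are instead $zq^2/(1-q)$ and $zq^4/(1-q^2)$; only the totals agree. In particular, the minimal array with one part on level $2$ has weight $2$, not $4$. Similarly, for $k_0=1$ the array $4+6$ in the second copy is admissible of weight $10$, below the $q^{12}$ that starts the $(0,2)$ summand.

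\textbf{What the paper uses instead.} The missing idea is the paper's \emph{relative height}, produced by the leg-drawing and folding algorithm of Subsection \ref{subsecRelHghtDeterm}. Roughly, it counts only those cells beneath a part that lie strictly above the right leg of its (possibly folded-in) predecessor and the left leg of its successor. In the example, the $2$ of the second copy has relative height $0$, because the cell under it lies on the right leg of $k_2$. The parts $4,6,\dots$ there have relative height $1$, which matches the summands exactly.

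With this statistic the paper obtains uniqueness of the heights (Proposition \ref{propAfterRoundH}). It also obtains possibility and reversibility of moves, including the level-swap exceptional case (Proposition \ref{propPossibleMoves}), and the unique base partition (Proposition \ref{propBasePtns}). For the starred diagram it then checks three things:
\begin{enumerate}
\item Moves of parts with relative height below $\ell-1$ never need a deleted odd top-row cell.
\item Parts with relative height $\ell-1$ (not merely parts on the top level) move by $\pm 2$, which gives $(q^2;q^2)_{n_\ell}$.
\item The extra $N_1, N_2,\dots$ in $L^{\ast}_{\ell,\ell}, L^{\ast}_{\ell,\ell-1},\dots$ come from raising by one exactly those unstarred base-partition parts that would land on the deleted row. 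Which parts these are is read off from the relative height at which absolute height $\ell-1$ is first reached.
\end{enumerate}

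\textbf{Remaining steps.} Your other steps are only asserted: the sum $\sum_s N_s^2$, the collision adjustments, commutation of moves, and the inverse map. The proposed induction on $\ell$ also cannot treat the top copy as an independent layer. Downward paths run through all copies, and the pattern $0,\ell,1,\ell-1,\dots$ of initial conditions itself changes with $\ell$.
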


The other variant is obtained by taking 
a horizontal reflection of \eqref{CMPPdiag} 
\begin{align}
\label{CMPPdiagAlt2Prep}
  \begin{array}{cccccccccc}
    & k_0 & & f_2 & & f_4 & & f_6 & & f_8 \\
    k_1 & & f_1 & & f_3 & & f_5 & & f_7 & \\
    & \cdot & & f_2 & & f_4 & & f_6 & & f_8 \\
    k_2 & & f_1 & & f_3 & & f_5 & & f_7 & \\
    & & & & & \vdots & & & & \\
    & \cdot & & f_2 & & f_4 & & f_6 & & f_8 \\
    k_\ell & & f_1 & & f_3 & & f_5 & & f_7 & 
  \end{array} \cdots
\end{align}
and deleting the top row of even numbers.  
\begin{align}
\label{CMPPdiagAlt2}
  \begin{array}{cccccccccc}
    & k_0 & & \times & & \times & & \times & & \times \\
    k_1 & & f_1 & & f_3 & & f_5 & & f_7 & \\
    & \cdot & & f_2 & & f_4 & & f_6 & & f_8 \\
    k_2 & & f_1 & & f_3 & & f_5 & & f_7 & \\
    & & & & & \vdots & & & & \\
    & \cdot & & f_2 & & f_4 & & f_6 & & f_8 \\
    k_\ell & & f_1 & & f_3 & & f_5 & & f_7 & 
  \end{array} \cdots
\end{align}

We set the notation again.  
We call the number of $[k_0, k_1, \ldots, k_\ell]-$admissible CMPP partitions 
on diagram \eqref{CMPPdiagAlt2}
of $n$ with $j$ parts $F^{\ast\ast}(i,j,n)$.  
We set 
\begin{align*}
  P^{\ast\ast}_i(z) := P^{\ast\ast}_i(z; q) 
  := \sum_{ j,n \geq 0 } F^{\ast\ast}(i,j,n) \; q^n \; z^j.  
\end{align*}

\begin{theorem}
\label{thmBSLT2}[partly~\cite{Russell_starting_point}]
  For arbitrary but fixed $i=$ 0, 1, 2, \ldots, $\ell$, 
  set $k_i = 1$ and the rest of the $k_\cdot$'s $=0$.  
  Then, 
  \begin{align}
  \label{eqRussellAlt2} 
    P^{\ast\ast}_i(z) = \sum_{ n_1, n_2, \ldots, n_\ell \geq 0} 
      \frac{ q^{ N_1^2 + N_2^2 + \cdots 
          + N_\ell^2 + L^{\ast\ast}_{\ell, i}(N_1, N_2, \ldots, N_\ell) } \; z^{N_1} }
        { (q; q)_{n_1} (q; q)_{n_2} \ldots (q; q)_{n_{\ell-1}} (q^2; q^2)_{n_\ell} }, 
  \end{align}
  where $N_s = n_s + n_{s+1} + \cdots + n_\ell$ 
  for $s = $ 1, 2, \ldots, $\ell$, 
  and the linear forms $L^{\ast\ast}_{\ell, i}(N_1, N_2, \ldots, N_\ell)$ 
  follow the pattern below.  
  {\allowdisplaybreaks \begin{align*}
    L^{\ast\ast}_{\ell, 0}(N_1, N_2, \ldots, N_\ell) 
      & = N_1 + N_2 + \cdots + N_\ell \; + \; N_1 \\ 
    L^{\ast\ast}_{\ell, \ell}(N_1, N_2, \ldots, N_\ell) 
      & = N_2 + N_3 + \cdots + N_\ell \\ 
    L^{\ast\ast}_{\ell, 1}(N_1, N_2, \ldots, N_\ell) 
      & = N_3 + N_4 + \cdots + N_\ell \; + \; N_2 \\ 
    L^{\ast\ast}_{\ell, \ell-1}(N_1, N_2, \ldots, N_\ell) 
      & = N_4 + N_5 + \cdots + N_\ell \\ 
    & \vdots
  \end{align*}}
\end{theorem}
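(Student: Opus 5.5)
We prove Theorem \ref{thmBSLT2} combinatorially, in the same base partition and moves framework used for Theorem \ref{thmRussellMain} and Theorem \ref{thmBSLT1}. The plan is to build a bijection between $[k_0,\ldots,k_\ell]$-admissible arrays on diagram \eqref{CMPPdiagAlt2} and pairs consisting of a base partition and a tuple of partitions counted by the denominators.

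\textbf{Step 1: reduce the admissibility condition.} With $k=1$, every downward sum is at most $1$. On \eqref{CMPPdiagAlt2} this means the positions that may carry a part form a rigid staircase: between consecutive nonzero frequencies, the downward paths force a minimal gap depending on the rows occupied. Reading positions from left to right, each part is an occurrence of $m$ in a row indexed by $1,\ldots,\ell$ (odd rows for odd $m$, the intervening rows for even $m$). We record for each part its \emph{height}, meaning how many rows it can be traded across. Grouping parts into maximal clusters, we aim to show the following: a partition whose $N_1$ parts split into $n_s$ clusters of \enquote{size} $s$ has a unique minimal-weight representative, the \emph{base partition}. The weight of that base partition is
\[
N_1^2+\cdots+N_\ell^2+L^{\ast\ast}_{\ell,i}(N_1,\ldots,N_\ell).
\]
The quadratic part should come, as in the Andrews--Gordon combinatorics, from the clusters stacked as tightly as possible. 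The linear form accounts for the initial condition $k_i=1$, which blocks the leftmost positions. The extra $+N_1$ (resp. $+N_2$, \ldots) appearing in $L^{\ast\ast}_{\ell,0}$, $L^{\ast\ast}_{\ell,1}$, \ldots should arise because the top even row is deleted and $k_0$ sits above it. This shifts the first available slot for clusters of the corresponding size by one. We check this step by computing the minimal configuration case by case in $i$, following the alternating lineup $0,\ell,1,\ell-1,\ldots$. This lineup is exactly the order in which the blocked initial positions grow, as for Theorem \ref{thmRussellMain}.

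\textbf{Step 2: define moves.} For $s<\ell$, a cluster of size $s$ is moved forward one unit of weight at a time. Each move must preserve admissibility, and when clusters collide we define a crossing rule in which the moving cluster jumps over the other with net weight change $1$. This is the standard Kurşungöz-style argument. It yields $1/(q;q)_{n_s}$ for the $n_s$ clusters of size $s$, since their displacements form a partition into at most $n_s$ parts.

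For clusters of the maximal size $\ell$, the top row of \eqref{CMPPdiagAlt2} is missing. A full-height cluster spans the whole column pattern, so shifting it by one unit breaks the parity alignment of the odd and even rows. It therefore can only move by two units at a time, which yields $1/(q^2;q^2)_{n_\ell}$. We verify this by showing that an intermediate odd shift would create a downward path of sum $2$.

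\textbf{Step 3: invert the moves.} We prove the process is a bijection by giving the inverse. Given an admissible array, we scan from the left and pull clusters back, largest displacement first, until the base partition is reached. The number of parts is preserved throughout, giving the factor $z^{N_1}$.

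\textbf{Expected main obstacle.} The hard part is the collision and crossing rules in Step 2 near the left boundary. There the initial condition $k_i$ and the deleted row interact, and we must show moves are always defined and never change the cluster type. I would first settle $\ell=2$ by hand for all $i$, then extract the general rule. As a fallback, the reflection \eqref{CMPPdiagAlt2Prep} of \eqref{CMPPdiag} followed by deletion can be related to diagram \eqref{CMPPdiagAlt1} by reindexing $i\mapsto$ its partner. This relation would let us transfer the boundary analysis from the proof of Theorem \ref{thmBSLT1}, adjusting only the linear form by the extra $N_s$ term.
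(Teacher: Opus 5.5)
Your outline has the same overall architecture as the paper's argument: base partitions, moves, two-step moves at the top height, and a unit shift forced by the deleted row. But the object everything rests on is never constructed, so Steps 1--3 have nothing to act on. You need a statistic that sorts the $N_1$ parts into classes of sizes $n_1,\dots,n_\ell$, is invariant under moves, and has a unique minimal configuration for each $(n_1,\dots,n_\ell)$. ``Height, meaning how many rows it can be traded across'' and ``maximal clusters'' do not define one. Worse, the Andrews--Gordon cluster picture contradicts the statement. The exponent of $z$ is $N_1=n_1+\cdots+n_\ell$, the total number of parts, so each individual part must carry a type; $n_s$ clusters of $s$ parts each would produce $z^{N_1+\cdots+N_\ell}$ as in \eqref{eqAG}. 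The natural per-part candidate, the row (absolute height), fails too. It changes under moves, and it is not what $n_s$ counts: in the example of Subsection \ref{subsecRelHghtDeterm}, the part $16$ has absolute height $2$ but relative height $0$. The missing idea is the relative height produced by the folding algorithm, a non-local quantity depending on neighbouring parts. In the paper:
\begin{itemize}
\item moves are the unit steps preserving admissibility and the relative heights of the moved part and its neighbours;
\item collisions are resolved by swapping relative heights between level parts at distance $2h+2$, which is what your unspecified ``crossing rule'' has to be;
\item existence of moves is Proposition \ref{propPossibleMoves};
\item uniqueness and weight of the minimal configuration is Proposition \ref{propBasePtns}.
\end{itemize}
Invoking a standard clusters-and-moves argument supplies none of this, and you explicitly defer the crossing rule and the minimal configurations to experiments with $\ell=2$.

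The diagram-specific steps also need different justifications.
\begin{itemize}
\item \textbf{The factor $(q^2;q^2)_{n_\ell}$.} It does not come from an odd shift creating a downward path of sum $2$; a single part far from the initial condition stays admissible after a unit shift. It comes from the fact that relative height never exceeds absolute height, so a part of relative height $\ell-1$ has absolute height $\ell-1$. On \eqref{CMPPdiagAlt2} the even positions of absolute height $\ell-1$ are exactly the deleted top row. Such parts are therefore odd, and a unit step either lands on a deleted position or lowers the relative height. You must also check, as in the proof of Theorem \ref{thmBSLT1}, that parts of smaller relative height are never forced onto the deleted row and that the deletion blocks none of their moves.
\item \textbf{The linear form.} The paper works on the full reflected diagram \eqref{CMPPdiagAlt2Prep}, where $\widetilde P_i=P_i$ but heights are counted toward the $k_\ell$ row, so the boundary tables change. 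It then raises by one each base-partition part that would sit on the deleted even row: all $N_1$ parts for $k_0$, the $N_2$ parts of relative height at least $1$ for $k_1$, none for $k_\ell$ or $k_{\ell-1}$, and so on. Your ``first available slot shifts by one'' does not say which parts move or why the result is still minimal.
\item \textbf{The fallback.} It cannot be uniform in $i$. Matching the row structures of \eqref{CMPPdiagAlt1} and \eqref{CMPPdiagAlt2} requires a parity swap, i.e.\ shifting all parts by an odd amount, which changes the exponent by a multiple of $N_1$. Yet for $\ell\ge 4$ the form $L^{\ast\ast}_{\ell,\ell-1}=N_4+\cdots+N_\ell$ differs from every $L^{\ast}_{\ell,j}$ in the coefficients of $N_2,\dots,N_\ell$. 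Comparing coefficients of $z^1$ confirms that $P^{\ast\ast}_{\ell-1}(z)\neq P^{\ast}_j(zq^c)$ for all $j$ and $c$.
\end{itemize}
The boundary analysis genuinely has to be redone on the reflected diagram, which is what the paper's ``mutatis mutandis'' refers to.
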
 

The even moduli generalization of Theorem \ref{thmRRG} 
is found by Bressoud~\cite{Bressoud79}.  
The multiple series in both Theorems \ref{thmBSLT1} and \ref{thmBSLT2} 
are either the series in Bressoud's~\cite{Bressoud80}, 
the series in~\cite{KLRS}, 
or their lookalikes.  

\begin{theorem}[\cite{Bressoud80}]
  Let $b^{\ast}_{\ell+1,a}(m, n)$ be the number of partitions 
  enumerated by $b_{\ell+1,a}(m, n)$ with the additional 
  condition that if $f_j + f_{j+1} = \ell$, 
  then $j f_j + (j+1)f_{j+1} \equiv a-1 (\mathrm{mod} \; 2 )$.  
  Then, 
  \begin{align*}
    \sum_{ m, n \geq 0 } b^{\ast}_{\ell+1,a}(m, n) \; z^m \; q^n
    = \sum_{ n_1, n_2, \ldots, n_{\ell} \geq 0} 
      \frac{ q^{ N_1^2 + N_2^2 + \cdots 
          + N_\ell^2 + N_a + N_{a+1} + \cdots + N_{\ell} } 
            \; z^{N_1 + N_2 + \cdots + N_\ell} }
        { (q; q)_{n_1} (q; q)_{n_2} \ldots (q; q)_{n_{\ell-1}} (q^2; q^2)_{n_\ell} }, 
  \end{align*}
  where $N_s = n_s + n_{s+1} + \cdots + n_\ell$ 
  for $s = $ 1, 2, \ldots, $\ell$.  
\end{theorem}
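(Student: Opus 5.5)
We plan to prove Bressoud's identity combinatorially, in the base partition and moves framework that the paper will later use for the CMPP series. We fix $(n_1, \ldots, n_\ell)$ and read the summand as follows. The factor $q^{N_1^2 + \cdots + N_\ell^2 + N_a + \cdots + N_\ell} z^{N_1 + \cdots + N_\ell}$ is the generating function of a unique \emph{base partition} $\beta$. The factor $1/\bigl((q;q)_{n_1}\cdots(q;q)_{n_{\ell-1}}(q^2;q^2)_{n_\ell}\bigr)$ records forward moves applied to the parts of $\beta$.

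The base partition is the one built in the Andrews--Gordon setting. It has $n_s$ clusters of "height" $s$, for $s = 1, \ldots, \ell$: a height-$s$ cluster is $s$ parts packed as tightly as the rule $f_j + f_{j+1} \leq \ell$ allows. The clusters are placed left to right with the larger heights towards the smaller parts. We then check the following.
\begin{enumerate}[(i)]
\item $\beta$ has $N_1 + \cdots + N_\ell$ parts and weight $N_1^2 + \cdots + N_\ell^2 + N_a + \cdots + N_\ell$. The shift $N_a + \cdots + N_\ell$ comes from pushing everything up so that $f_1 < a$.
\item $\beta$ satisfies the parity condition: whenever $f_j + f_{j+1} = \ell$ occurs in $\beta$, which happens exactly at the height-$\ell$ clusters, we have $j f_j + (j+1) f_{j+1} \equiv a-1 \pmod 2$. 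If this fails, we adjust the base configuration of the height-$\ell$ clusters, namely their $f_j/f_{j+1}$ split, to fix it.
\end{enumerate}

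The moves are as follows. For $s < \ell$, the height-$s$ clusters are moved forward by one unit at a time, with the $n_s$ clusters moving weakly-ordered amounts $\mu_1 \geq \cdots \geq \mu_{n_s}$. This gives $1/(q;q)_{n_s}$, and it follows the standard Kur\c{s}ung\"{o}z-type proof of Andrews--Gordon: the $t$-th largest cluster is moved $\mu_t$ times, and when a cluster would collide with a larger-height cluster, the two pass through each other by an explicit rearrangement that preserves weight increments. For height $\ell$, a single unit move of a cluster $\{j^{f_j}, (j+1)^{f_{j+1}}\}$ changes $j f_j + (j+1) f_{j+1}$ by $\ell$. A unit move that keeps the sum equal to $\ell$ therefore flips parity whenever the weight changes by $1$. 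So we must define a height-$\ell$ move as a \emph{double} step of weight $2$: for example, $(f_j, f_{j+1}) = (u, \ell-u) \mapsto (u-2, \ell-u+2)$, with wraparound to the next pair of consecutive integers. This preserves $j f_j + (j+1) f_{j+1} \bmod 2$. Moving the $n_\ell$ height-$\ell$ clusters by even amounts $2\mu_1 \geq \cdots \geq 2\mu_{n_\ell}$ gives $1/(q^2;q^2)_{n_\ell}$.

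Finally, we show that the map $(\beta; \text{move sequences}) \mapsto$ partition is a bijection onto the partitions counted by $b^{\ast}_{\ell+1,a}(m,n)$. For the inverse, given such a partition, we repeatedly locate the cluster of largest height and largest parts and move it backwards as far as possible. This recovers the $\mu$'s and the $n_s$, since $n_s$ is determined by the cluster decomposition obtained from the maximal-height greedy splitting.

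The main obstacle is the interaction of the height-$\ell$ double moves with smaller clusters. When a height-$\ell$ cluster must pass a lower-height cluster, the rearrangement has to preserve both the difference condition and the parity condition at every window with $f_j + f_{j+1} = \ell$. It also must not create new saturated windows of the wrong parity from adjacent smaller clusters. To handle this, we would first treat $\ell = 1$ and $\ell = 2$ explicitly; for $\ell = 1$ the statement reduces to partitions into distinct parts of prescribed parity pattern, namely the $(q^2;q^2)$ denominator. We would then organize the general case by tracking the parity of $\sum$ weights within each saturated window. The goal is to show that passing a lower cluster through shifts a window by exactly two units, so that the parity is invariant.
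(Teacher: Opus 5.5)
The paper gives no proof of this statement. It is quoted from Bressoud~\cite{Bressoud80}, where it is proved analytically, and is used only to recognize the series in Theorems~\ref{thmBSLT1} and~\ref{thmBSLT2}. Your idea of moving the saturated level in weight-$2$ steps to produce $(q^2;q^2)_{n_\ell}$ is the device the paper uses there, so the approach is sensible. But for Bressoud's partitions your argument has to stand on its own, and as written it has a genuine gap: the step you call the main obstacle is the whole combinatorial content, and it is only announced.

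You never define how an arbitrary partition counted by $b^{\ast}_{\ell+1,a}(m,n)$ splits into clusters; ``maximal-height greedy splitting'' is not a definition. Yet $n_s$, the moves and the inverse all depend on this, and the split is genuinely ambiguous once saturated windows overlap. Take $\ell=3$, $a=1$ and $j\geq 2$ even. Your double step sends the admissible configuration $\{j,(j+1)^2\},\{(j+3)^2\}$ to $(j+1)+(j+1)+(j+2)+(j+3)+(j+3)$. There both windows $(j+1,j+2)$ and $(j+2,j+3)$ are saturated, so it reads equally as ``triple, pair'' or ``pair, triple''. The next double step of the triple is blocked by the pair, and more than one admissible partition of the required weight with one triple and one pair is available; for $j=2$, both $3+3+5+5+6$ and $2+2+6+6+6$ qualify. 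You therefore need three things, none of which is given:
\begin{enumerate}[(a)]
\item an explicit weight-$2$ passing rule;
\item a proof that it stays inside $b^{\ast}_{\ell+1,a}$ and preserves all cluster counts;
\item a proof that the backward moves invert it.
\end{enumerate}

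Your claim (ii), that saturated windows occur exactly at the height-$\ell$ clusters, is false. For $\ell=2$, $a=2$, $(n_1,n_2)=(1,1)$, the base partition is forced to be $1+2+3$, since this is the only partition of $6$ into $3$ parts counted by $b^{\ast}_{3,2}$. Both windows $(1,2)$ and $(2,3)$ are saturated, so one of them involves the singleton. Hence parity must be controlled along runs of saturated windows that straddle clusters of different heights, in the base partitions and through every unit move and passing move. ``If this fails, we adjust the split'' is not an argument. One usable invariant: if $(j,j+1)$ and $(j+1,j+2)$ are both saturated, then $f_j=f_{j+2}$, so the two sums differ by $2f_j$ and parity is constant along a run. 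You would still have to show that every move preserves this.

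Finally, the order in which heights are processed is never fixed. Since lower clusters sit to the right, forward-moving clusters collide with \emph{lower} clusters, not larger ones as you wrote. The inverse is also imprecise: moving the cluster with the largest parts back first records only $\mu_1-\mu_2$ before it is blocked by the next cluster of the same height. The clean inverse starts from the smallest cluster, as in the paper's proof of Theorem~\ref{thmRussellMain}.
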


The goal of this paper is to give a combinatorial construction of 
the multiple series \eqref{eqRussellMain}, \eqref{eqRussellAlt1}, and \eqref{eqRussellAlt2}; 
hence to give an alternative proof of Russell's Theorems, 
i.e. Theorems \ref{thmRussellMain}, \ref{thmBSLT1}, and \ref{thmBSLT2}; 
and supply similar results in some missing cases.  
As noted above, the edge cases of Theorem \ref{thmRussellMain} 
were known before~\cite{GOW}.  
The necessary additional terminology and notation is 
introduced in Section \ref{secPrelim}, 
and the construction is done gradually in Sections \ref{secMoves} and \ref{secProofs}.  
We conclude with some more connections and potential directions 
for future research in Section \ref{secFutureWork}.

\section{Preliminaries}
\label{secPrelim}

On a CMPP diagram \eqref{CMPPdiag}, 
for a chosen and fixed part, 
consider the parts greater than this designated part, 
which are lined up in the diagonal direction containing the designated part.  
We call this the \emph{right arm} of the designated part.  
\emph{Left arm}, \emph{right leg}, \emph{left leg} are defined similarly.  
\begin{center}
  \includegraphics[scale=0.1]{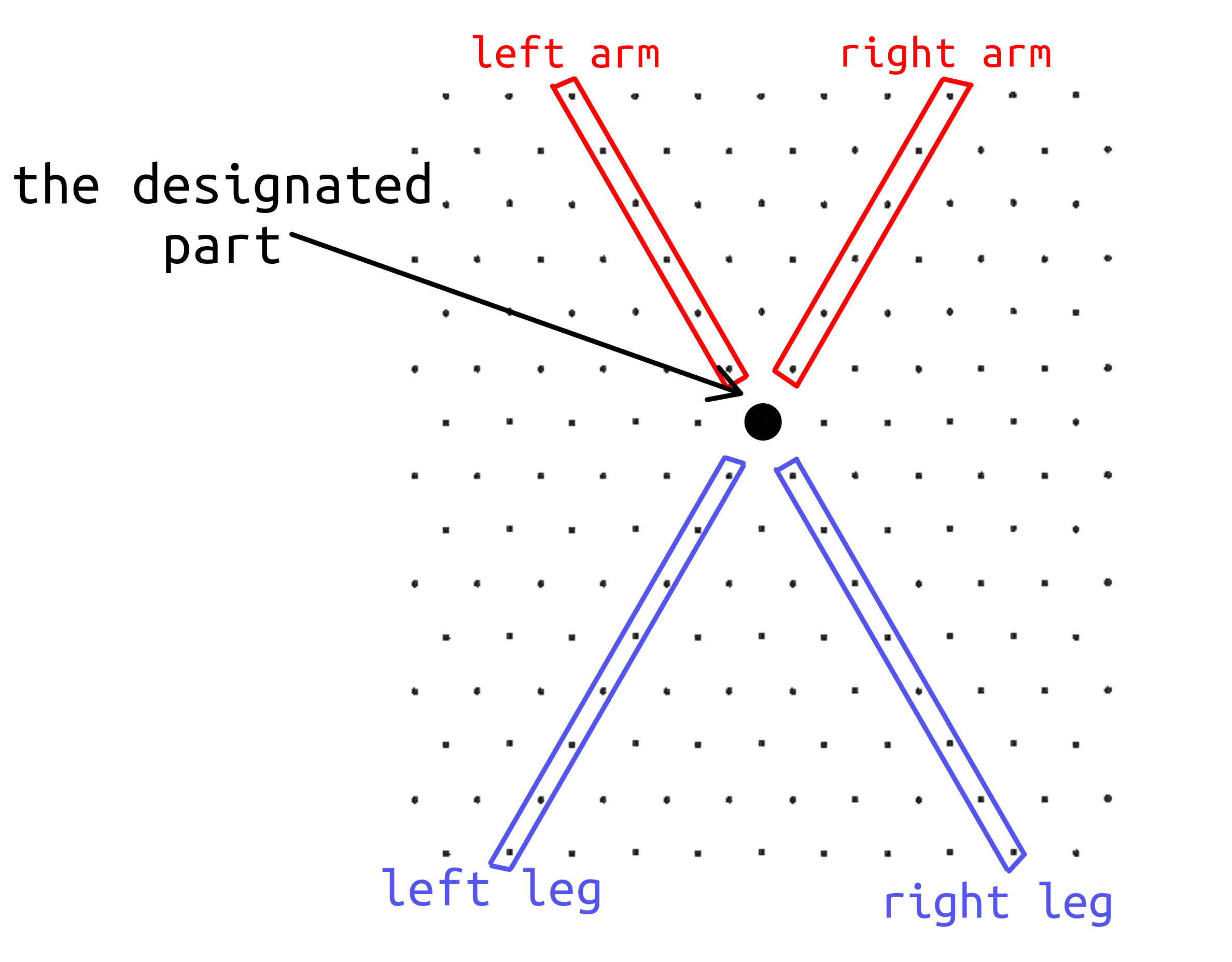}
\end{center}
Observe that these limbs may be empty or degenerate, 
and left arms or left legs may end with an initial condition.  
The initial conditions also will be assigned right arms and right legs.  

We also need to describe the positions of parts 
relative to the designated part, as in the following figure.  
\begin{center}
  \includegraphics[scale=0.1]{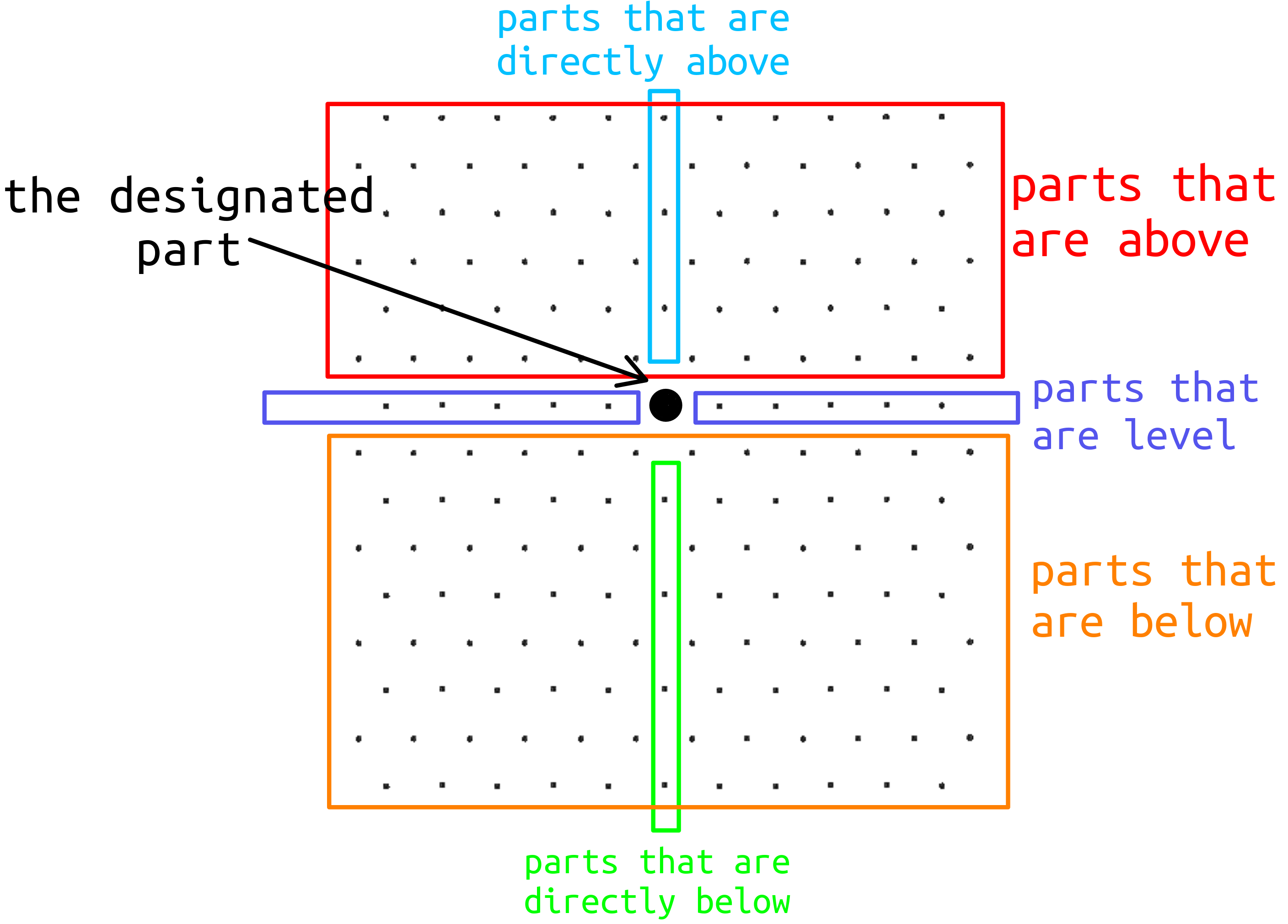}
\end{center}

\begin{defn}
\label{defAbsHeight}  
  The \emph{absolute height} of a designated part 
  is the number of parts directly below it.  
\end{defn}
Notice that the absolute height is an integer between 0 and $\ell-1$.  
The absolute heights of the appearing parts are shown in red in the following example. 
\begin{center}
  \includegraphics[scale=0.2]{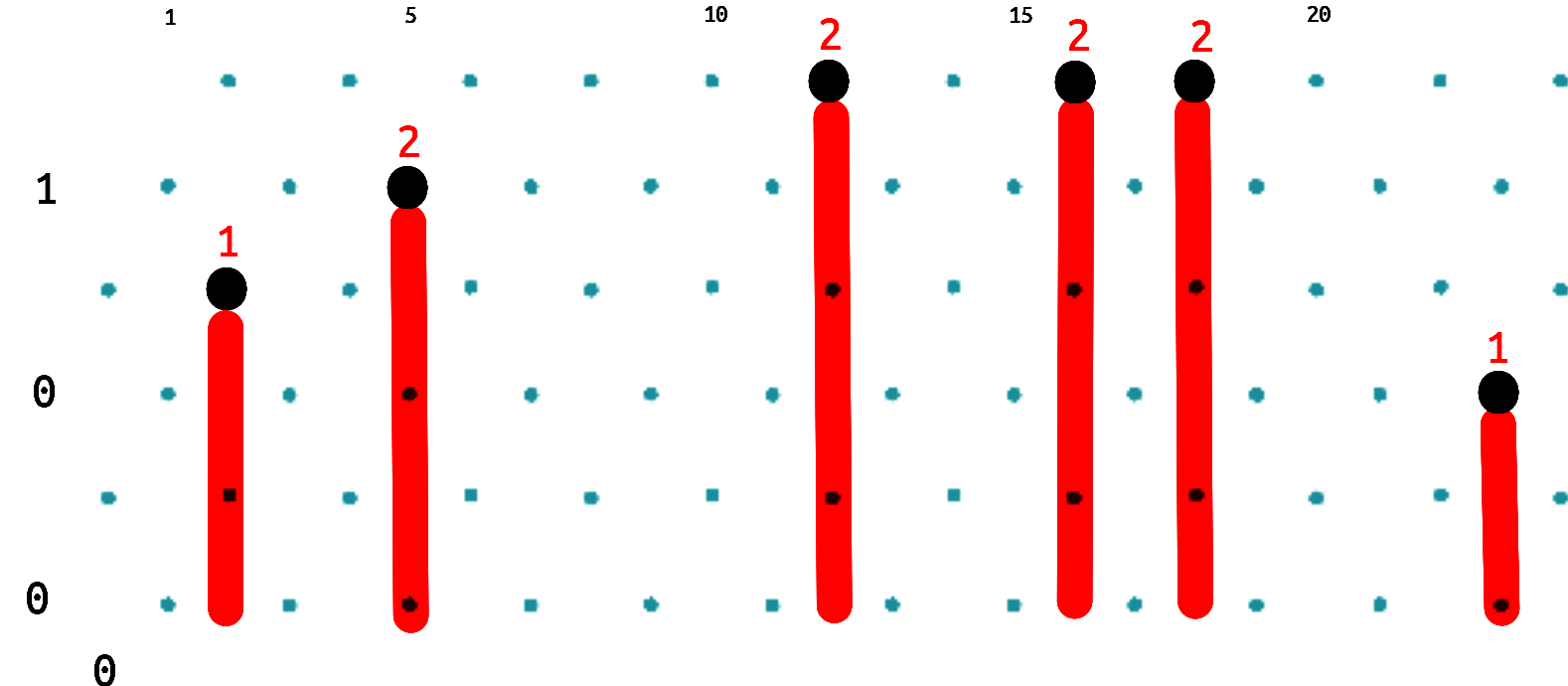}
\end{center}
Absolute heights can also be used to distinguish copies of the same integer.  
For example, the CMPP partition in the figure above contains parts 
\begin{align*}
  2_1, \quad 5_2, \quad 12_2, \quad 16_2, \quad 18_2, \quad 23_1.  
\end{align*}

\subsection{Algorithm for Determining the Relative Heights}
\label{subsecRelHghtDeterm}

Given $\displaystyle{\doublestroke{\lambda}}$, 
an admissible CMPP partition with $k = 1$, 

{\bf (H)} For $h = 0, 1, \ldots, \ell-1$, in this order, 

{\bf (P)} For the parts with no relative heights assigned, 
i.e. the parts still appearing in the possibly folded diagram 
(the folding operation is described in (F))
from smallest to largest, 
do the following: 

{\bf (DL)} Draw the right leg of the preceding part, 
which could be a positive initial condition $k_j = 1$ for $j > 0$;  
and the left leg of the succeding part.  

{\bf (S)} If the number of parts directly below the part at hand 
and above both drawn legs is strictly larger than the current $h$, 
then either skip to the succeeding part if there is one, 
or skip to the next $h$ and start over with the smallest part 
if there are no succeeding parts.  

{\bf (F)} If the number of parts directly below the part $n$ at hand 
and above both drawn legs is equal to the current $h$, 
then set the relative height of $n$ as $h$.  
Imagine two vertical lines, one passing through parts equal to $n-(h+1)$, 
and the other through parts equal to $n+(h+1)$.  
Fold the diagram identifying these vertical lines, 
and concealing parts that are in $[n+(h+1), n-(h+1))]$.  
At this point, $n$ is temporarily out of the picture, 
and its preceding and succeeding parts are now neighbors.  
Continue with the succeding part, if there is any, 
or increment the $h$ by 1 and start over with the smallest part if there are none.  

When all parts are assigned a relative height, stop.  

Let's work on an example.  
Consider the CMPP partition with 
$\ell = 3$, $k_0$ $= k_1$ $= k_2$ $= 0$, $k_3 = 1$, 
and parts $2_1$, $5_2$, $12_2$, $16_2$, $18_2$, $23_1$.  
The indices of parts indicate absolute heights.  
\begin{center}
  \includegraphics[scale=0.2]{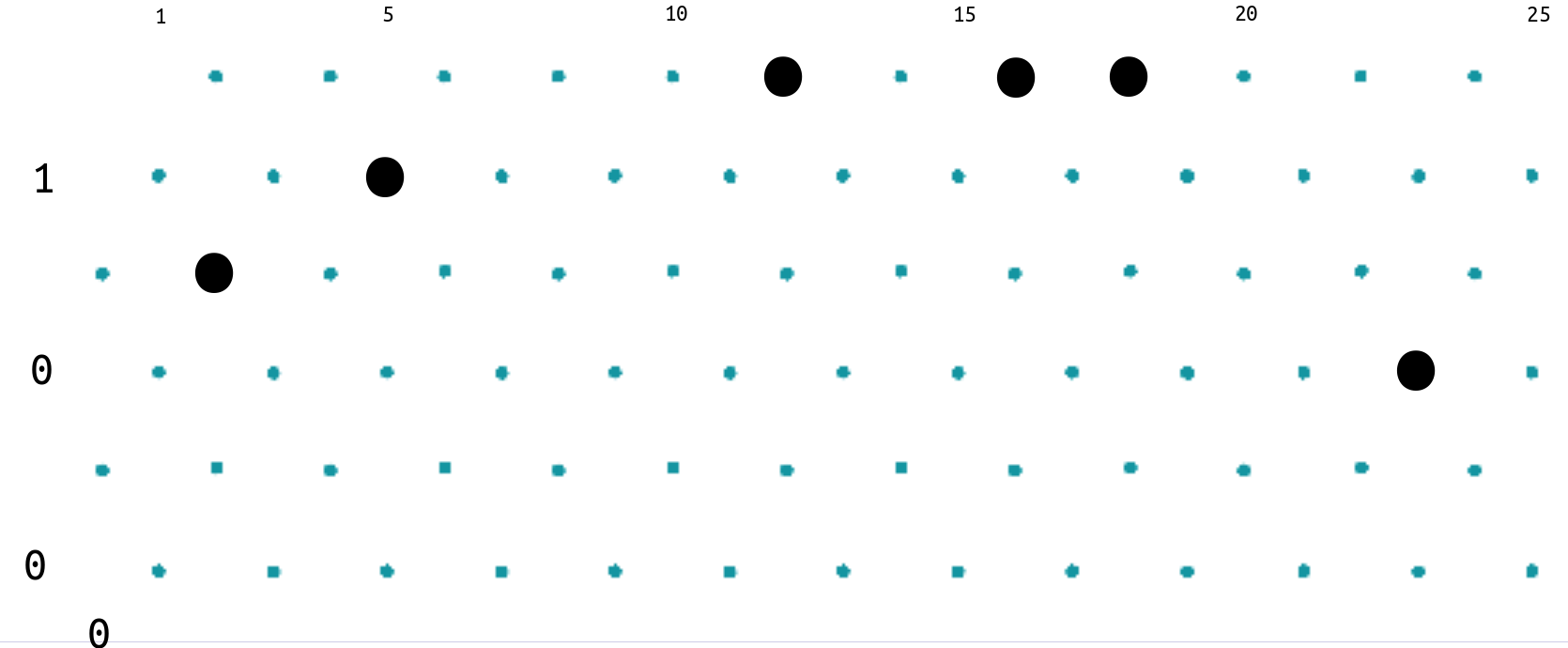}
\end{center}
We remove the part sizes momentarily, 
and start traversing the parts from the left.  
The zeroth round is for determining relative height zero, 
and the part in consideration is shown in red.  
\begin{center}
  \includegraphics[scale=0.1]{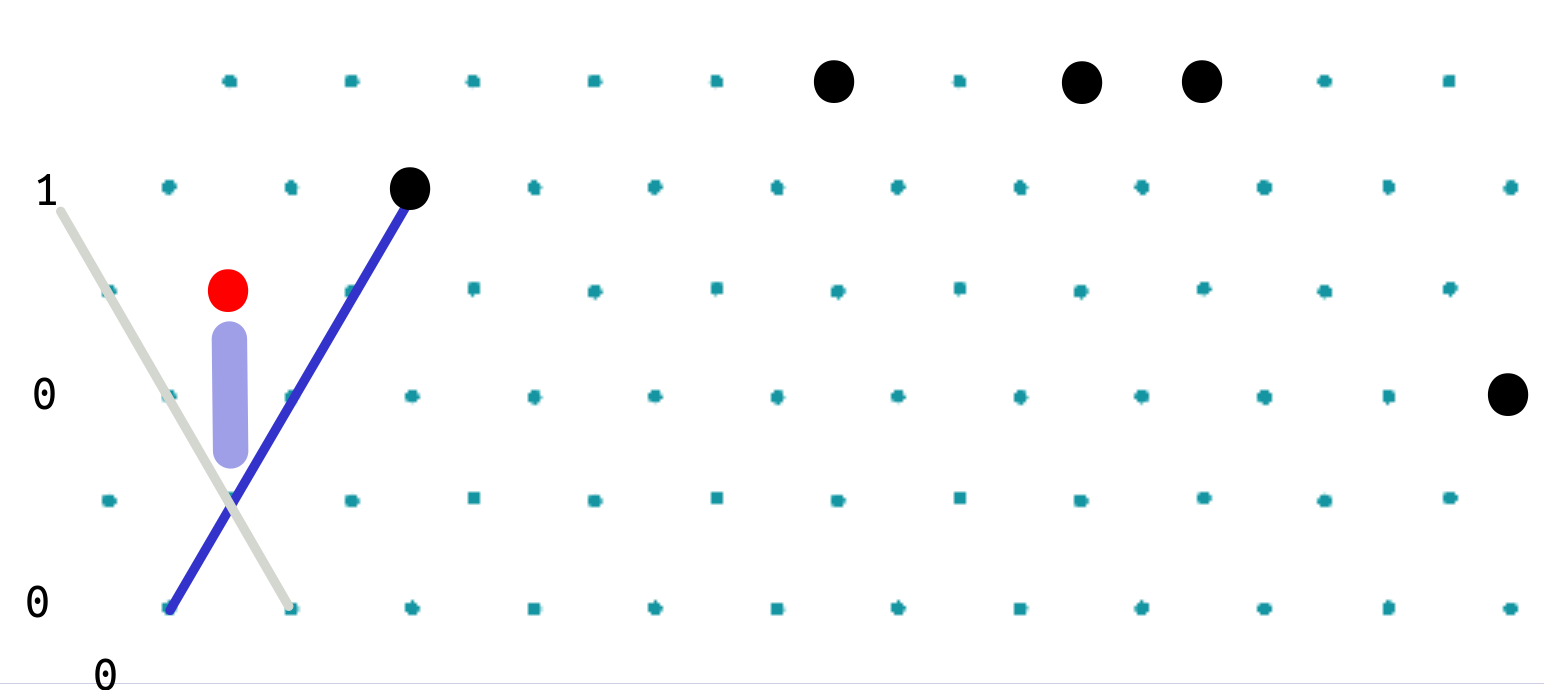} 
  \raisebox{1cm}{$\rightarrow$}
  \includegraphics[scale=0.1]{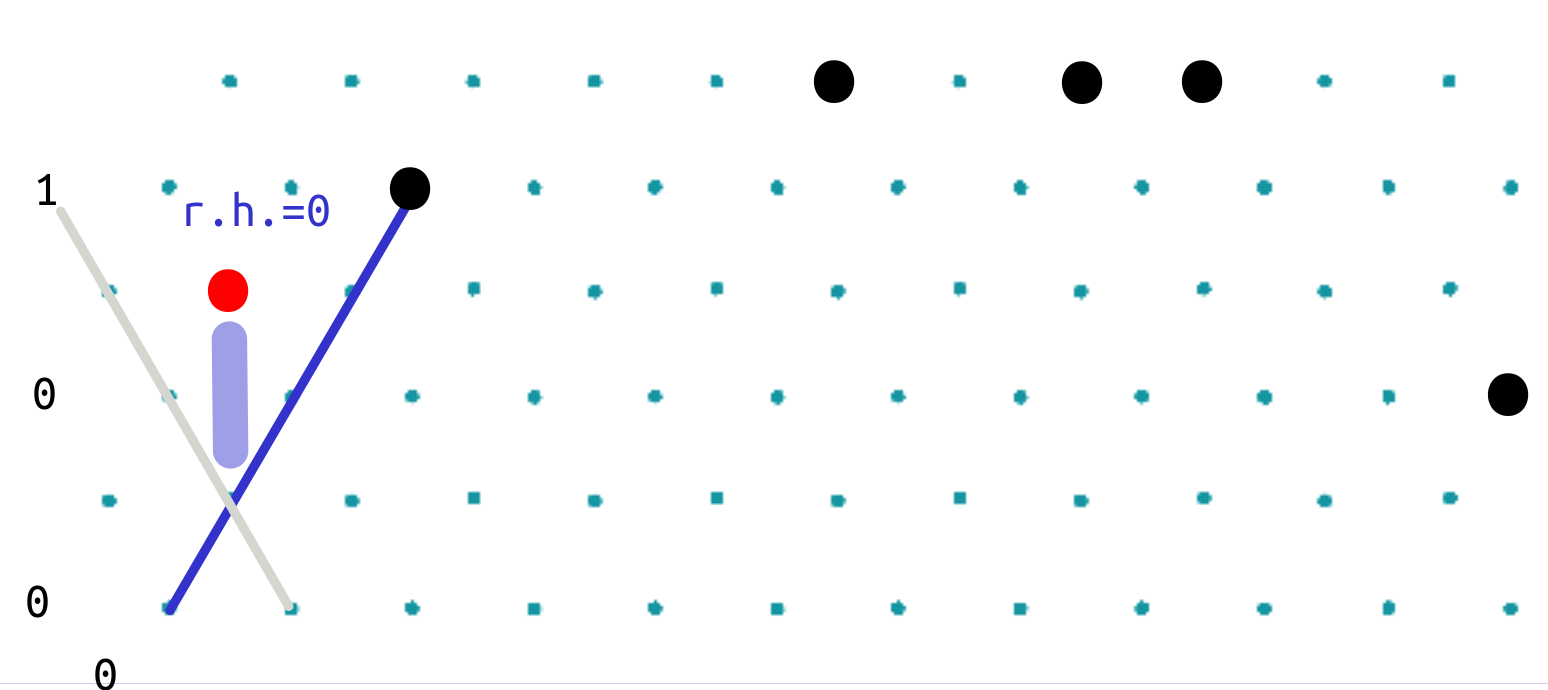} 
\end{center}
We hit a part with relative height zero.  
So we assign it its relative height, 
then fold and identify.  
\begin{center}
  \includegraphics[scale=0.1]{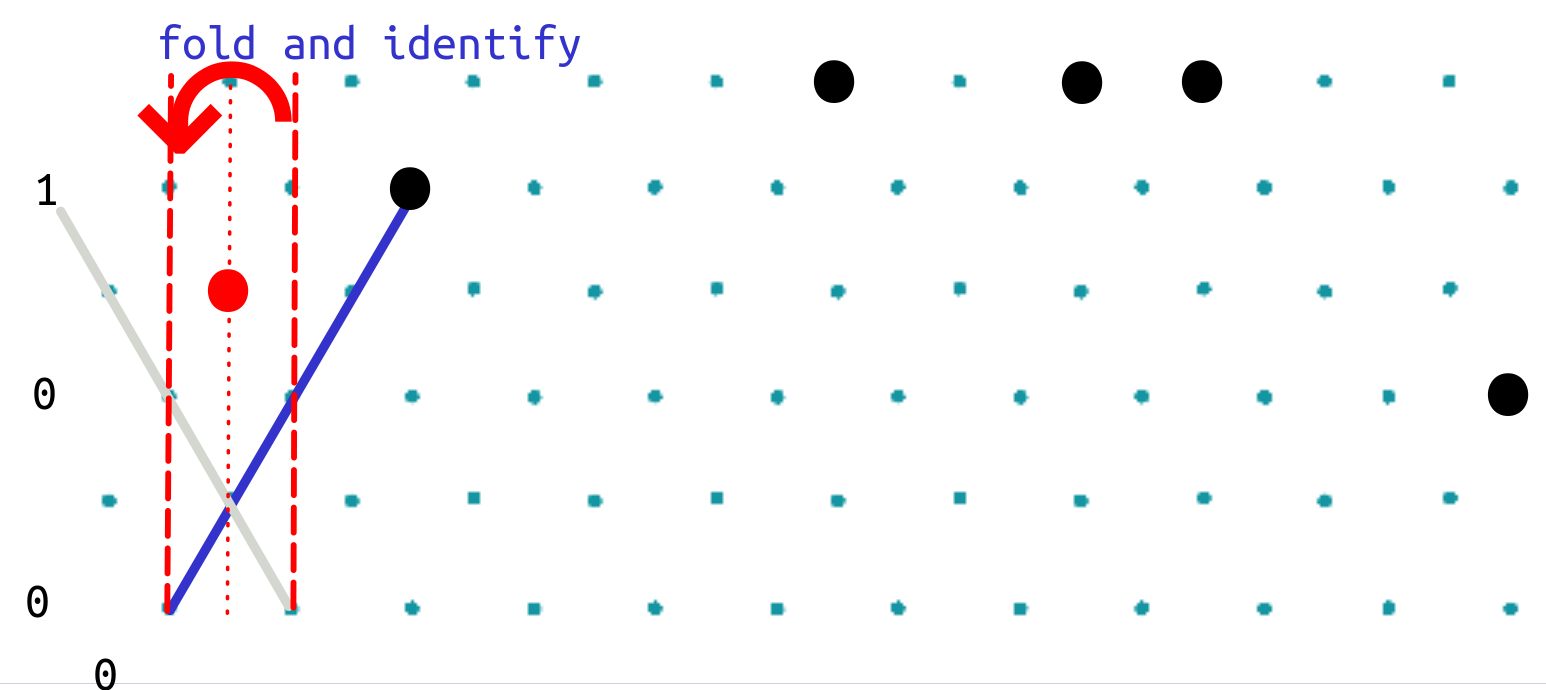} 
  \raisebox{1cm}{$\rightarrow$}
  \includegraphics[scale=0.1]{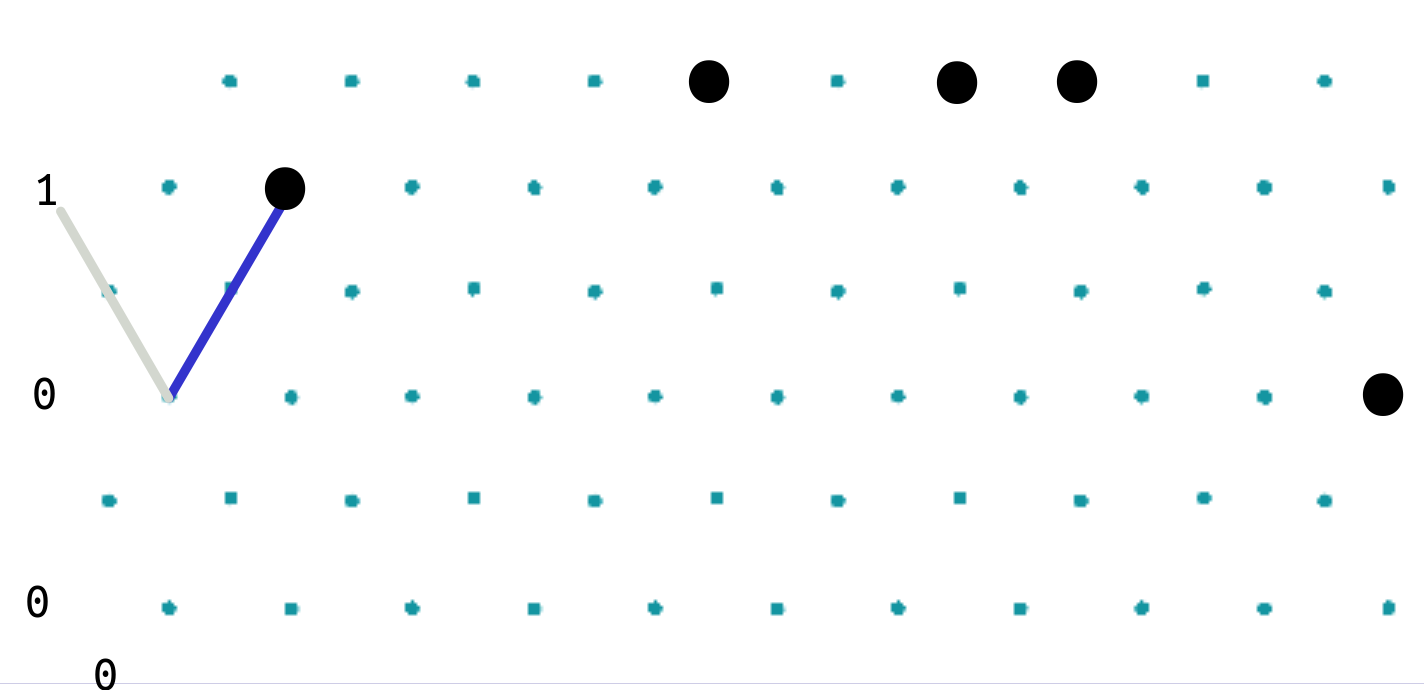} 
\end{center}
We continue with the next part.  
\begin{center}
  \includegraphics[scale=0.1]{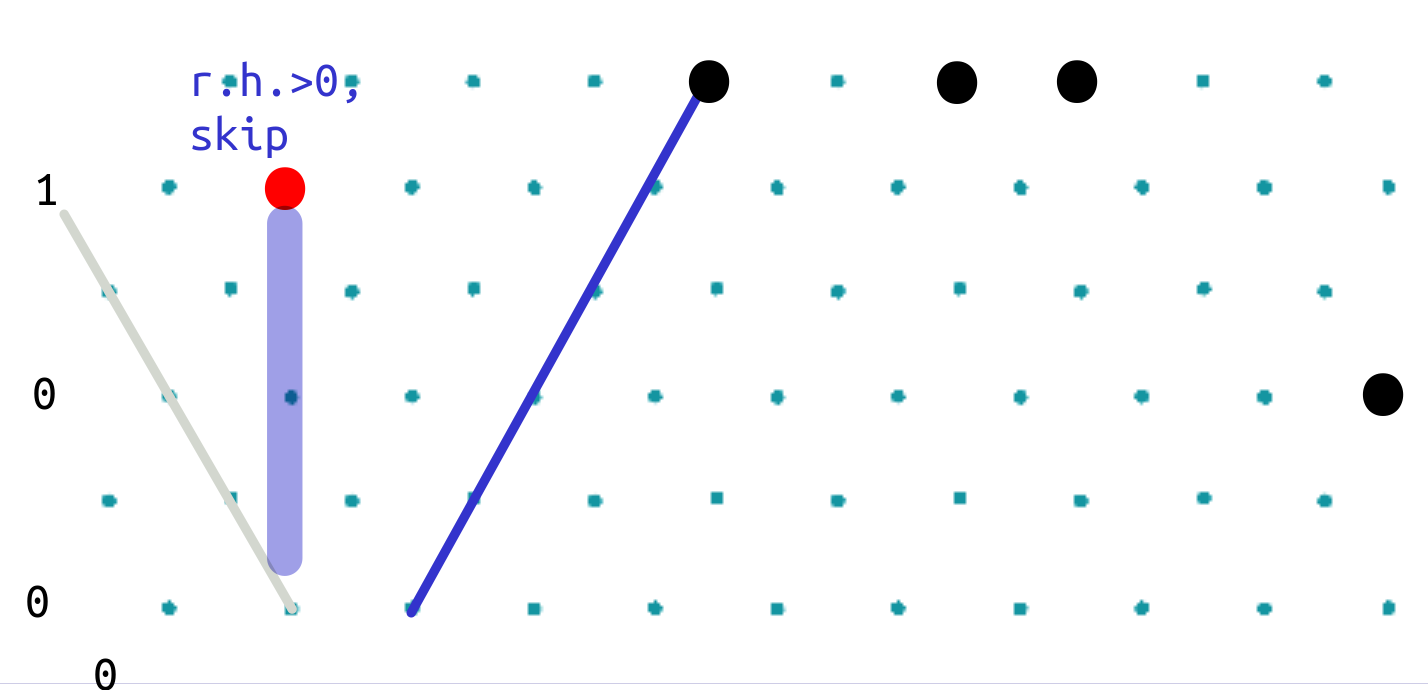} 
  \raisebox{1cm}{$\rightarrow$}
  \includegraphics[scale=0.1]{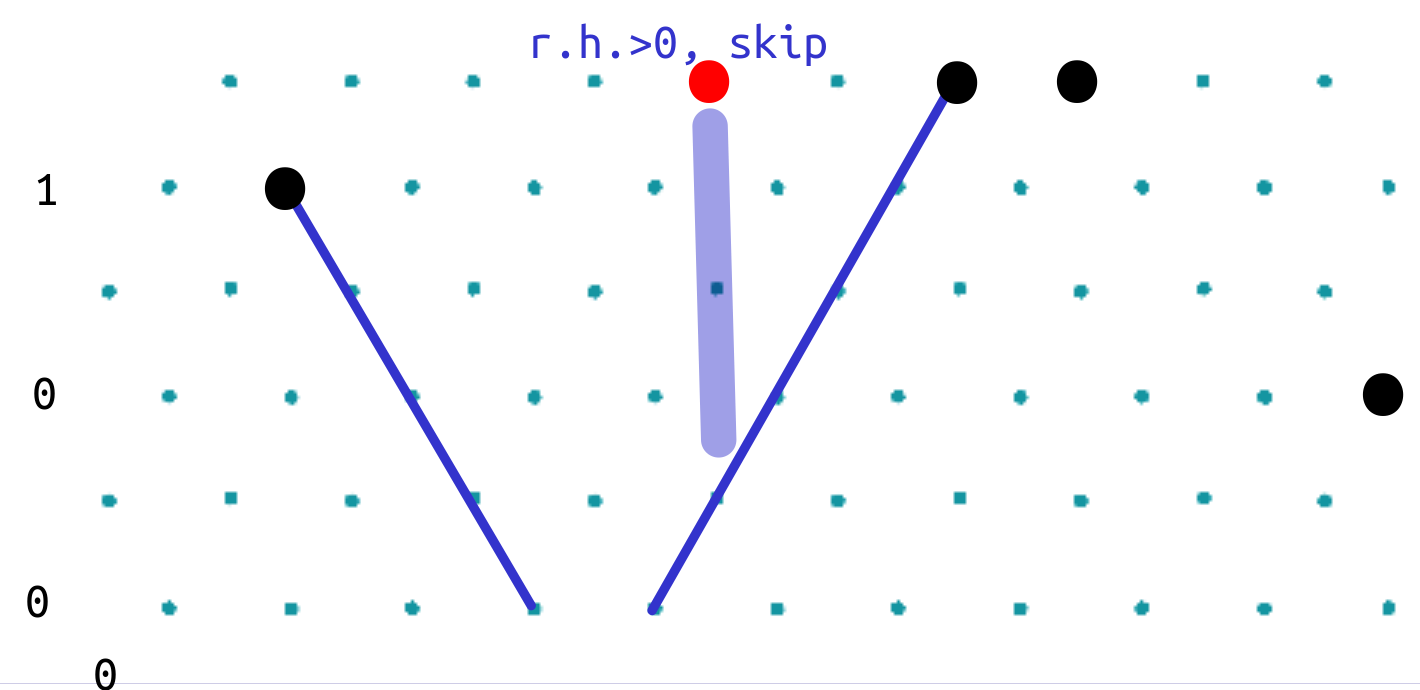} 
\end{center}
These two parts do not have relative height zero, 
so we skipped them.  
The next one does, so we assign it relative height zero, 
then fold and identify.  
\begin{center}
  \includegraphics[scale=0.1]{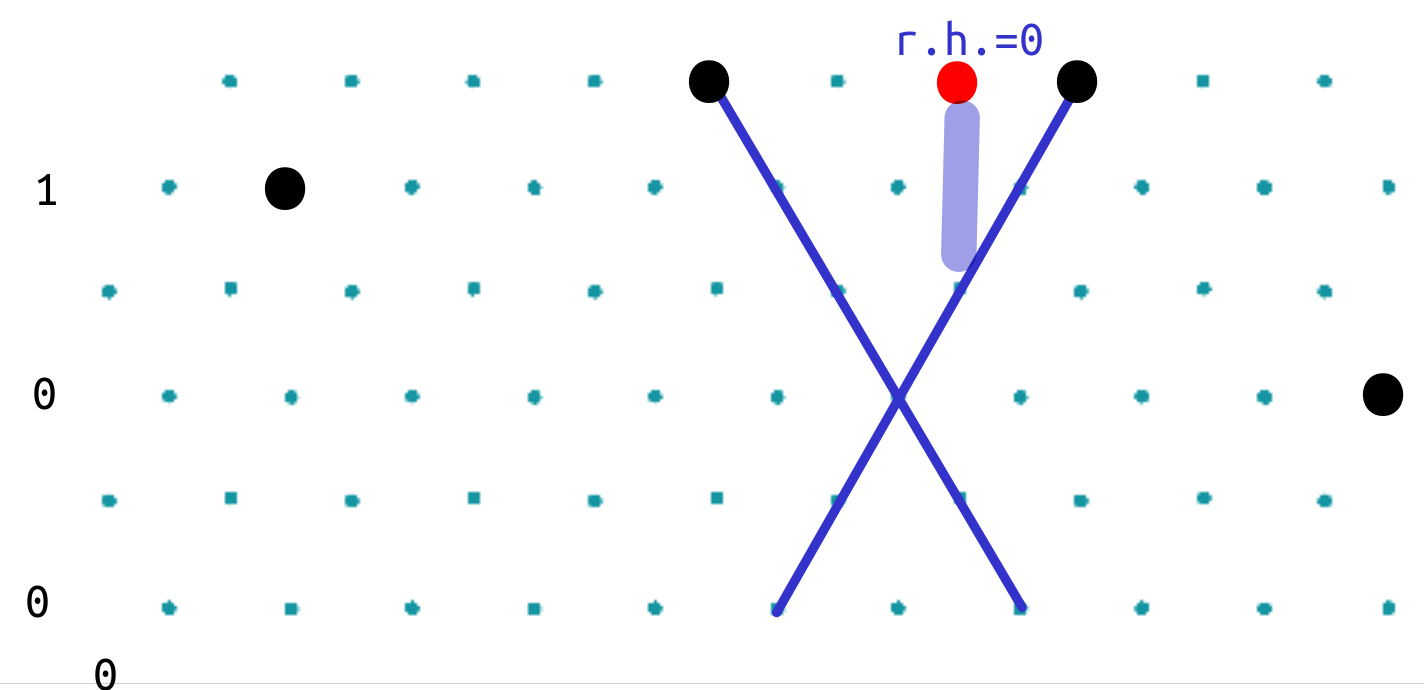} 
  \raisebox{1cm}{$\rightarrow$}
  \includegraphics[scale=0.1]{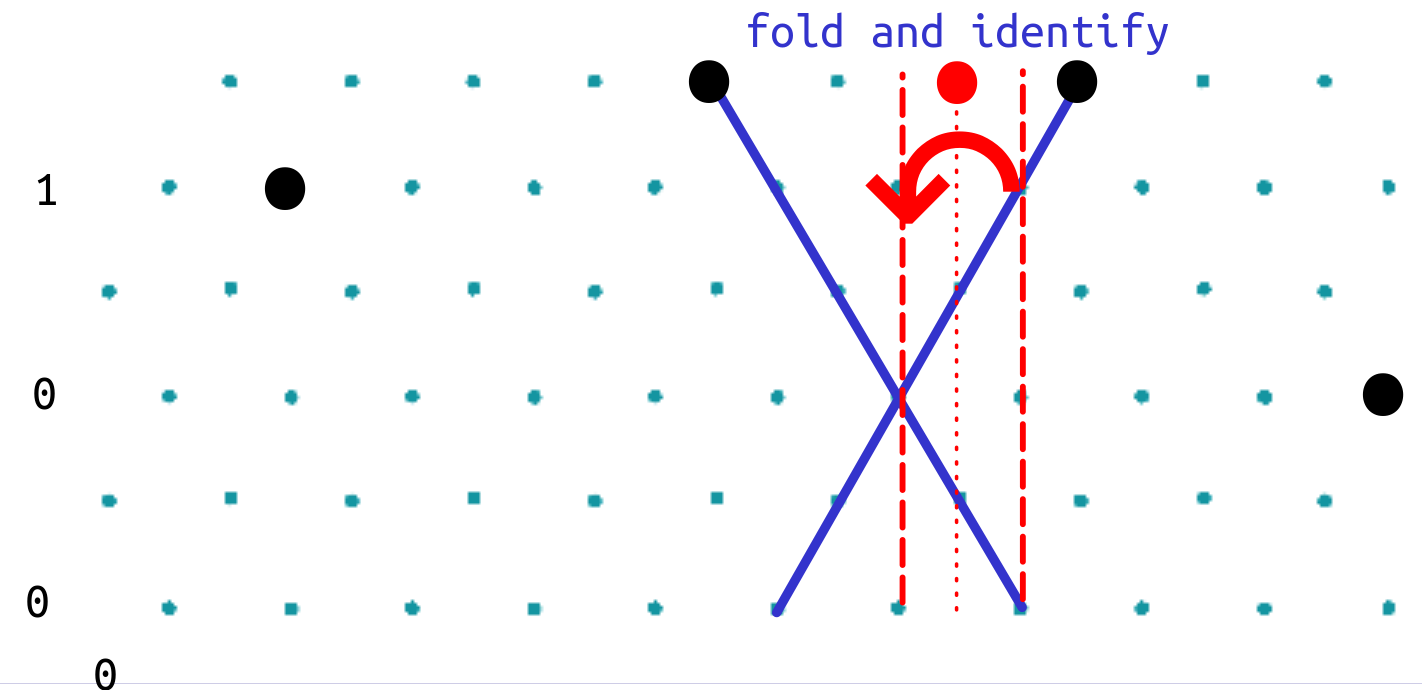} 
  \raisebox{1cm}{$\rightarrow$}
  \includegraphics[scale=0.1]{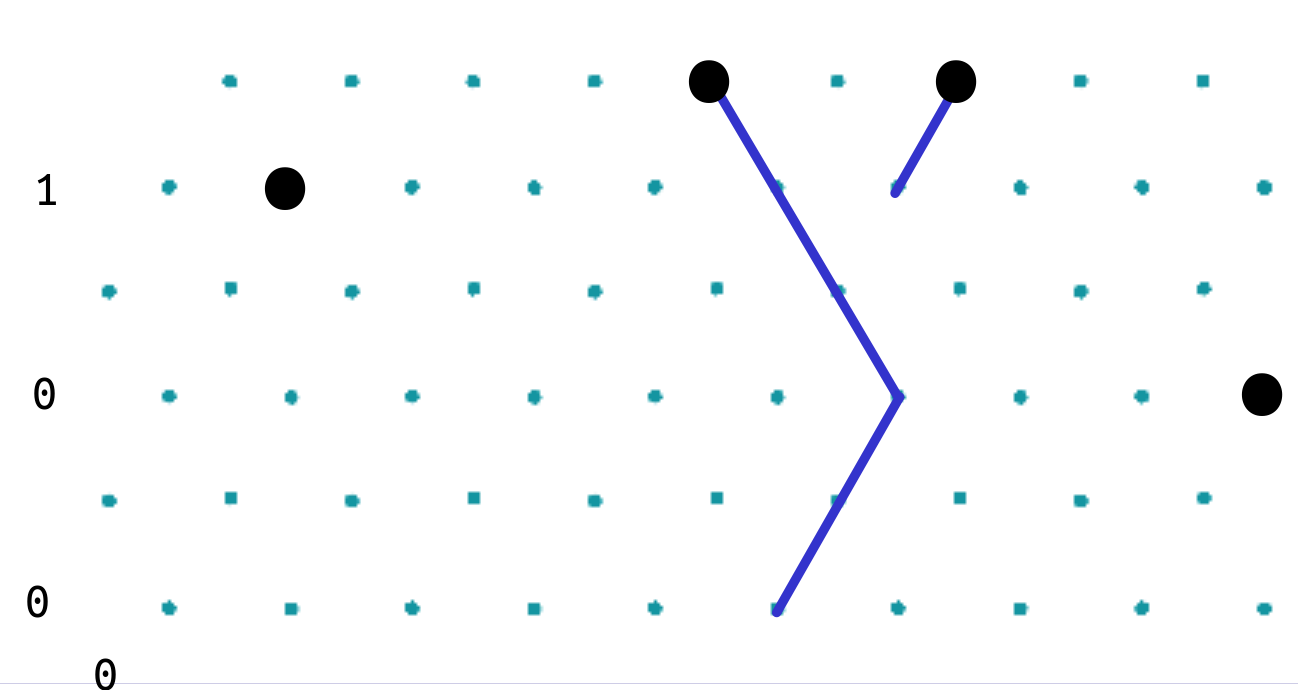} 
\end{center}
Among the remaining two parts, 
the latter one has relative height zero.  
So, we fold and identify around it.  
\begin{center}
  \includegraphics[scale=0.1]{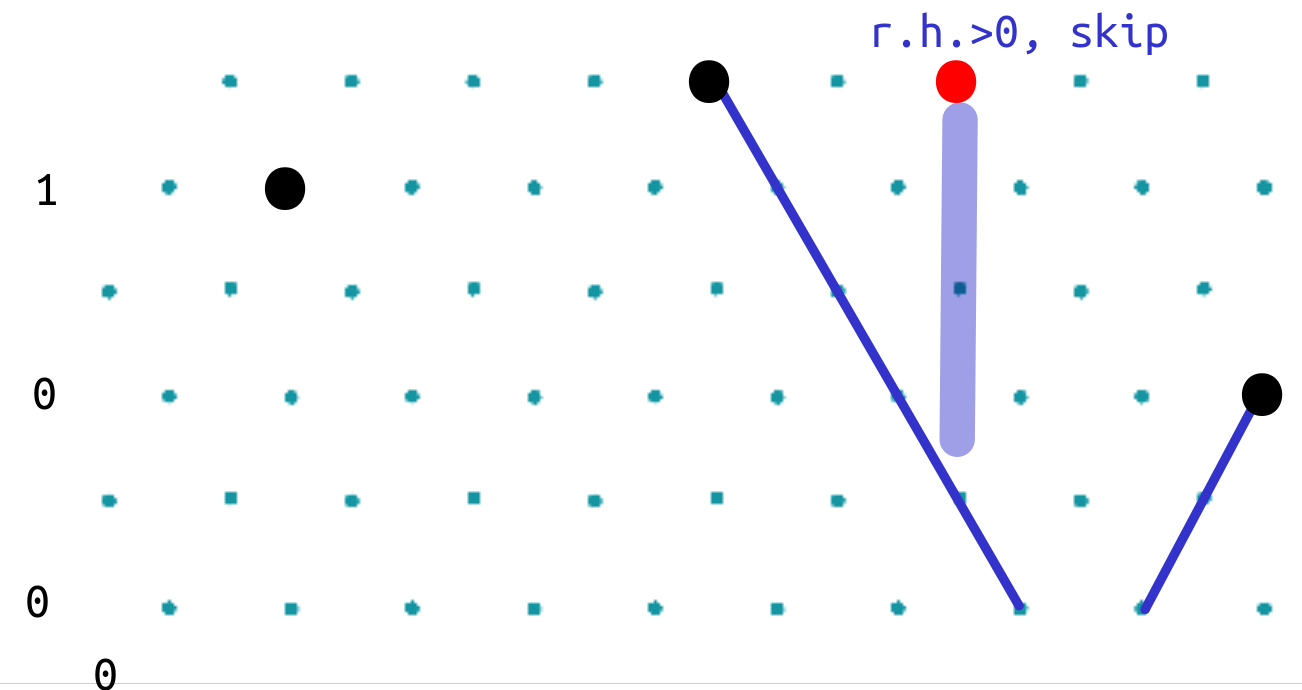} 
  \raisebox{1cm}{$\rightarrow$}
  \includegraphics[scale=0.1]{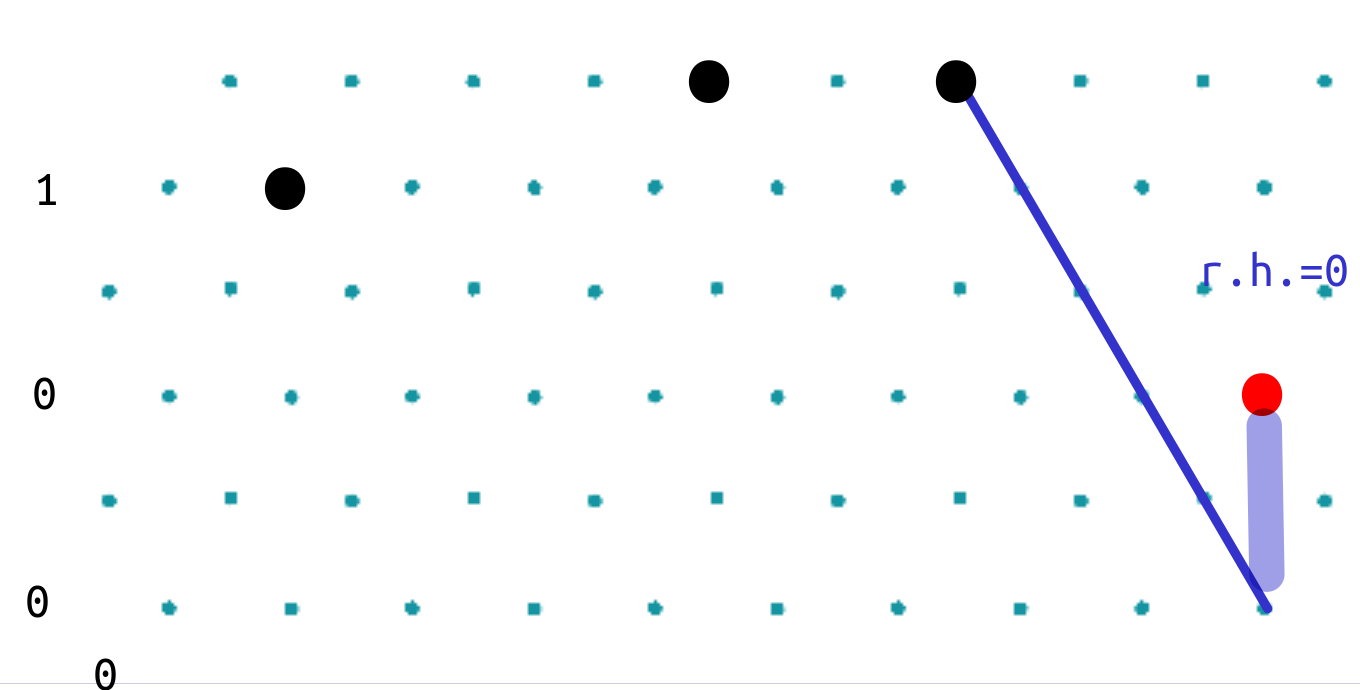} \\ 
  \vspace{5mm}
  \raisebox{1cm}{$\rightarrow$}
  \includegraphics[scale=0.1]{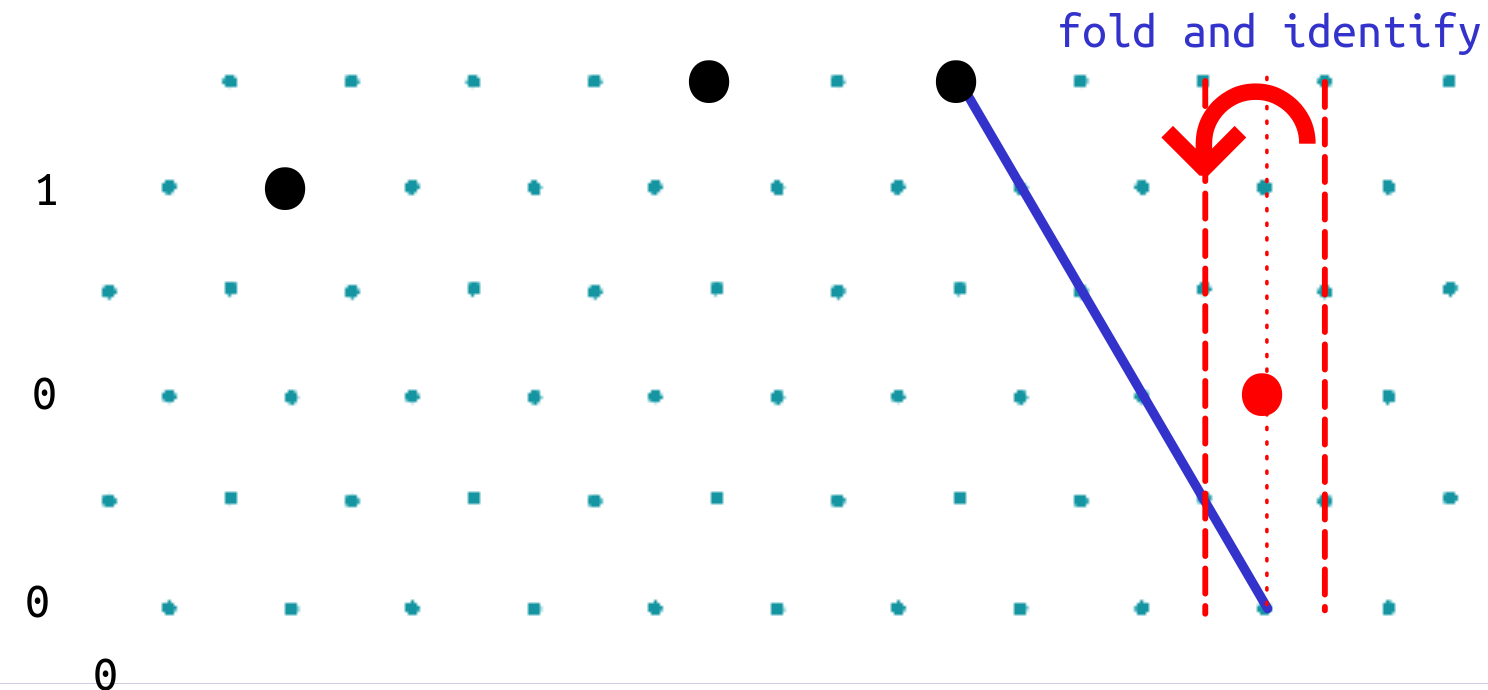} 
  \raisebox{1cm}{$\rightarrow$}
  \includegraphics[scale=0.1]{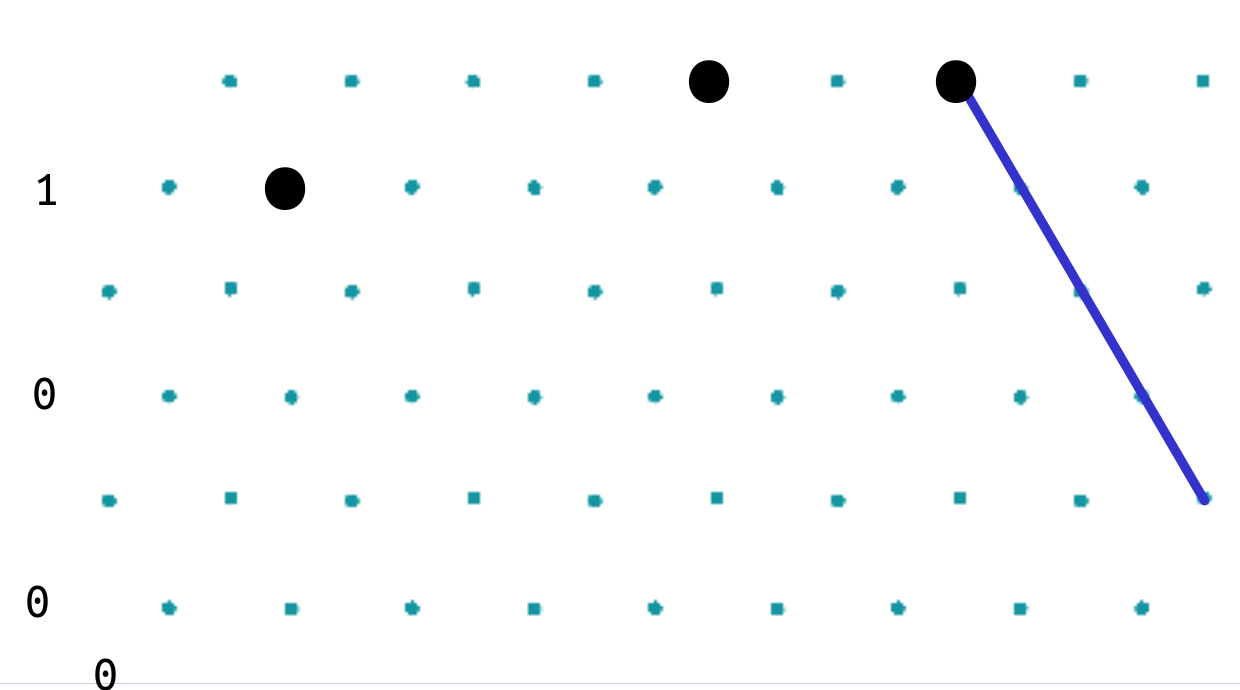} 
\end{center}
The next round, namely the first round, is for determining relative height one.  
We traverse the remaining parts from left to right again.  
The first part has relative height one, 
so we fold and identify around it.  
\begin{center}
  \includegraphics[scale=0.1]{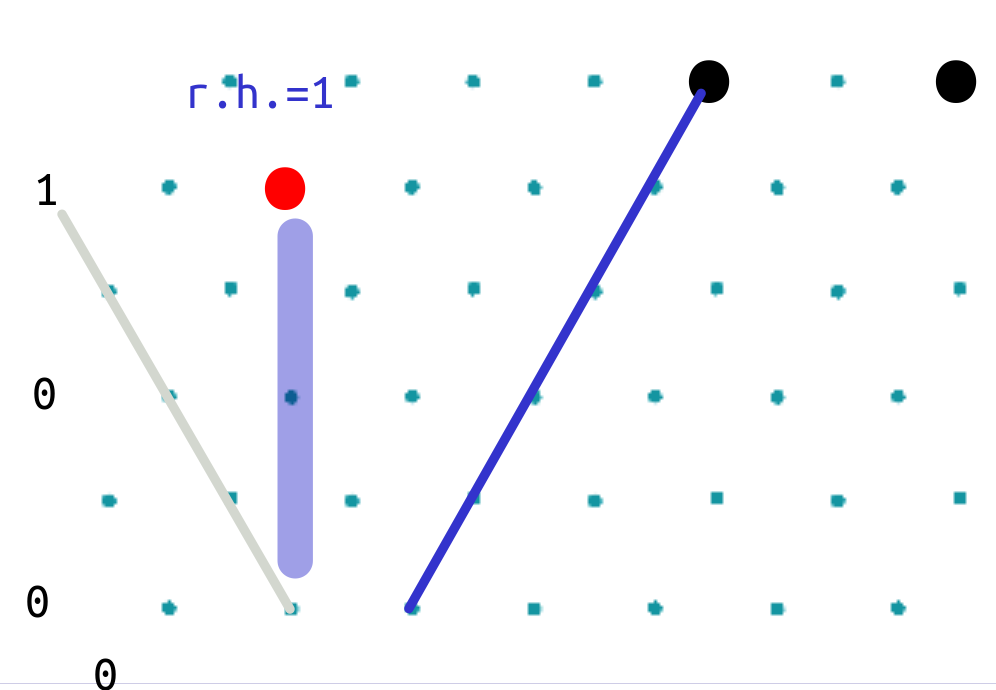} 
  \raisebox{1cm}{$\rightarrow$}
  \includegraphics[scale=0.1]{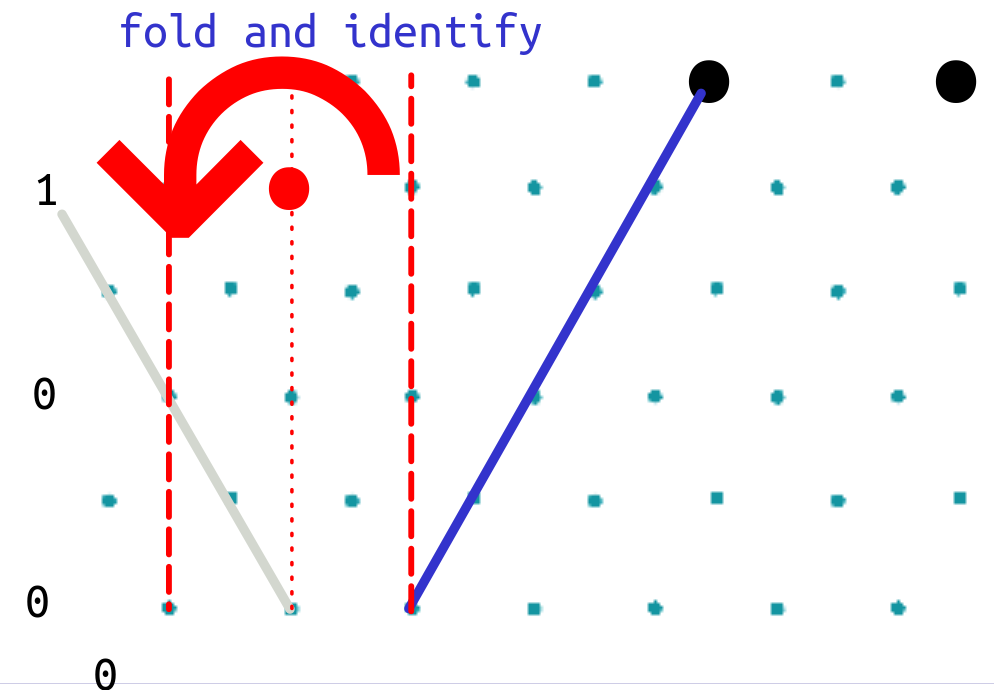} 
  \raisebox{1cm}{$\rightarrow$}
  \includegraphics[scale=0.1]{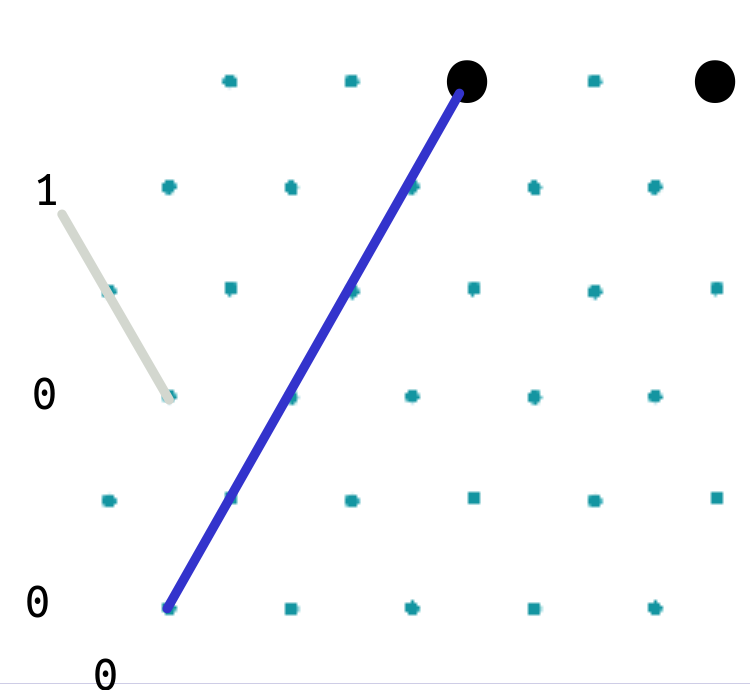} 
\end{center}
Notice that we identify parts that are further apart than 
those in the previous round.  
The reasons is that we identify parts 
that are
\begin{align*}
  (\textrm{the part which has just been assigned relative height } h) 
  \; \pm \; (h+1).  
\end{align*}
$h$ has been increased from zero to one in this round.  
For the remaining two parts, 
only the former has relative height one, 
so we fold and identify around it.  
The latter does not have relative height one, 
so we skip it.  
\begin{center}
  \includegraphics[scale=0.1]{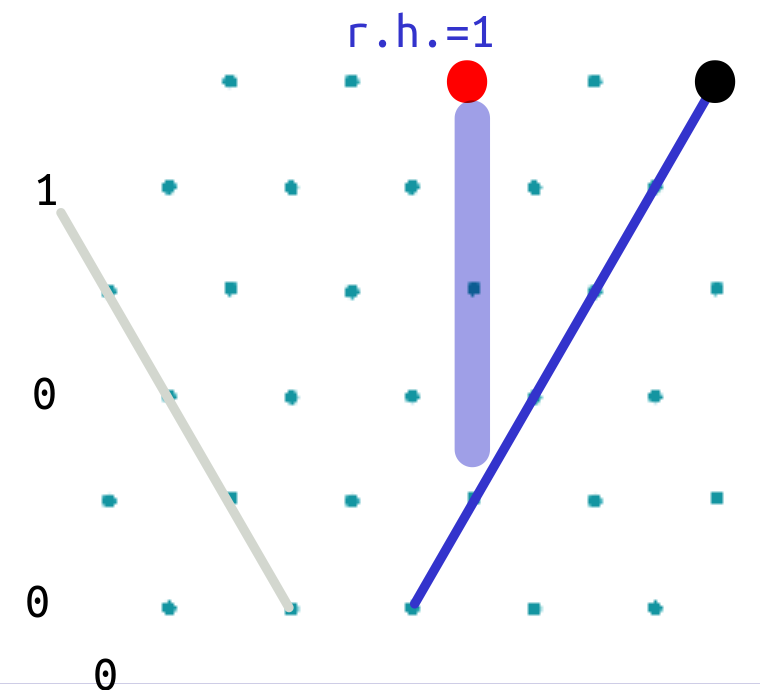} 
  \raisebox{1cm}{$\rightarrow$}
  \includegraphics[scale=0.1]{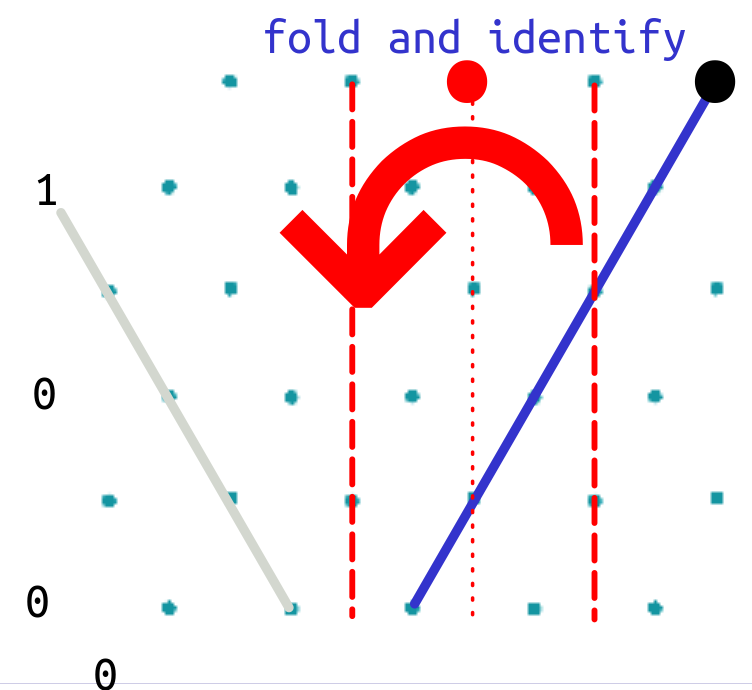} 
  \raisebox{1cm}{$\rightarrow$}
  \includegraphics[scale=0.1]{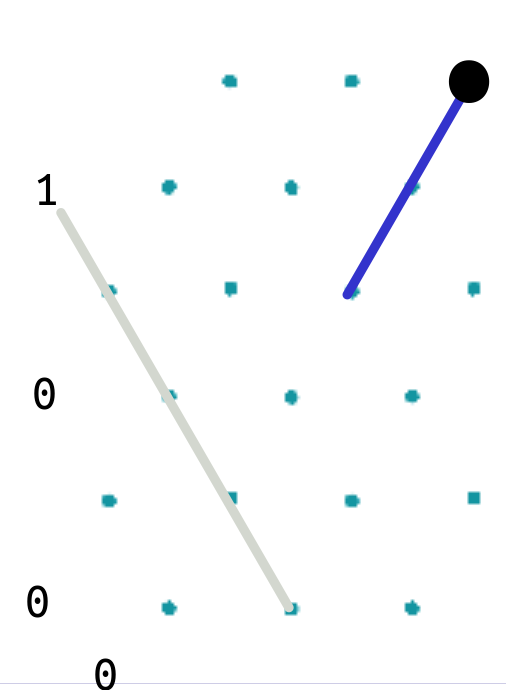} 
  \raisebox{1cm}{$\rightarrow$}
  \includegraphics[scale=0.1]{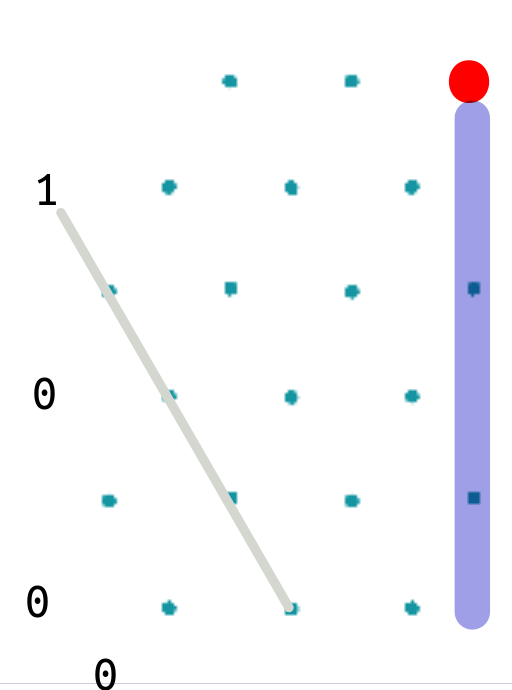} 
\end{center}
The second and the last round is for 
relative height two.  
There is only one part, 
and it has relative height two.  
We assign it its relative height, 
and there is no need to fold and identify anymore.  
\begin{center}
  \includegraphics[scale=0.1]{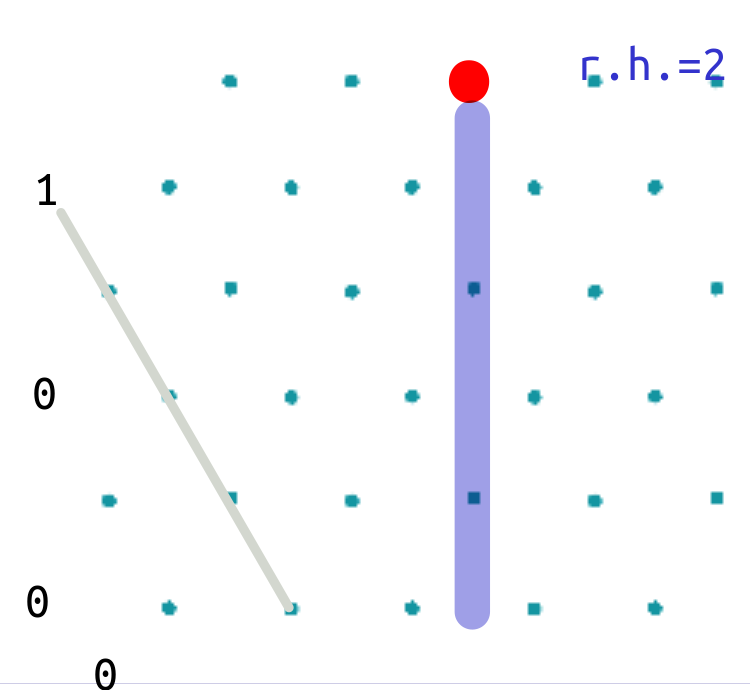} 
\end{center}
Finally, we unfold everything, 
and indicate relative heights of parts 
along with their absolute heights.  
We also make the part sizes visible again.  
\begin{center}
  \includegraphics[scale=0.2]{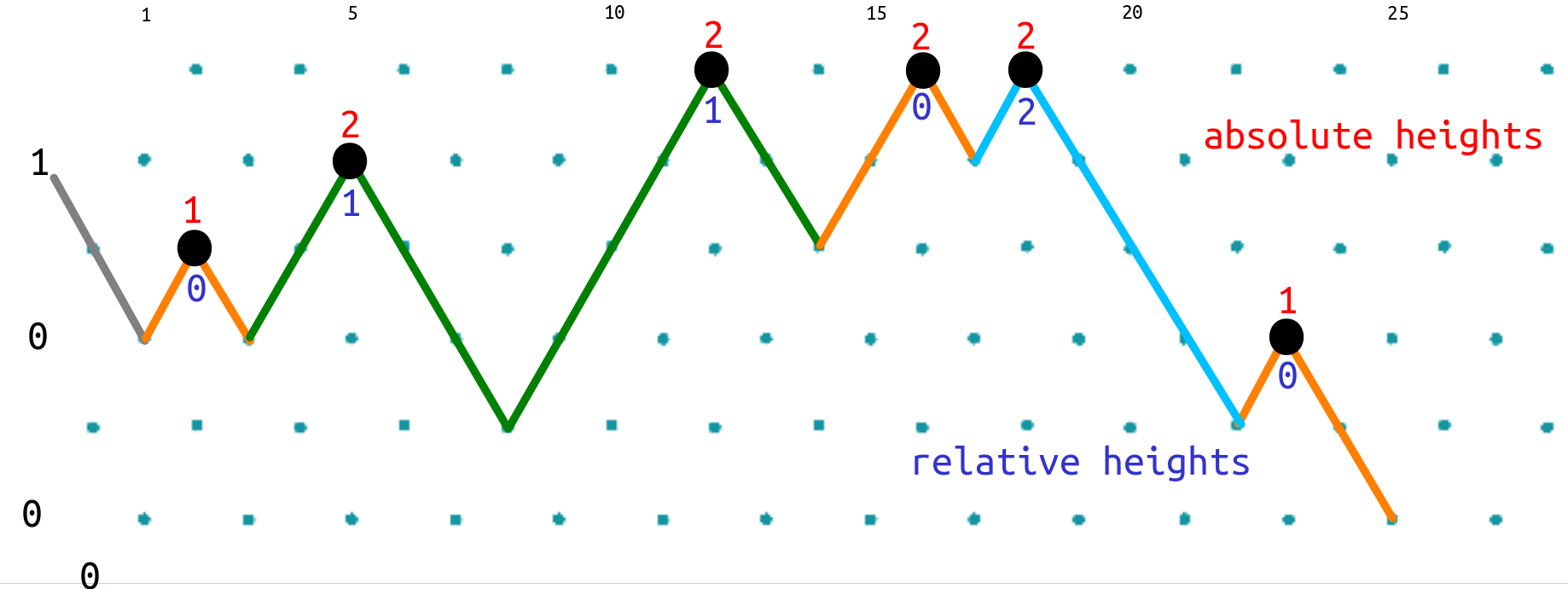} 
\end{center}

We denote relative heights of parts in parentheses as subscripts.  
For example, the last displayed CMPP partition is 
\begin{align*}
  2_{(0)} + 5_{(1)} + 12_{(1)} + 16_{(0)} + 18_{(2)} + 23_{(0)}.  
\end{align*}

\subsection{Observations on the Assignment of the Relative Heights}
\label{subsecRelHghtAlgoRemarks}

There are many details to be pointed out and many remarks to be made
on the algorithm given in Subsection \ref{subsecRelHghtDeterm}.  

When both legs of each part are drawn, 
the obtained wedges may overlap, 
but none of them may be contained in the other 
because of the admissibility condition of CMPP partitions with $k = 1$.  
For visual convenience, we draw the upper envelope of the resulting shape.  
This will make pictures look clearer 
when we deal with the forward and backward moves later.  
Below are three examples for relative positions of two parts.  
In the first two, the legs of the wedges overlap, 
in the third, one wedge is completely contained in the other.  
The first may appear in an admissible CMPP partition with $k = 1$, 
while the second and third may not.  
\begin{center}
  \includegraphics[scale=0.1]{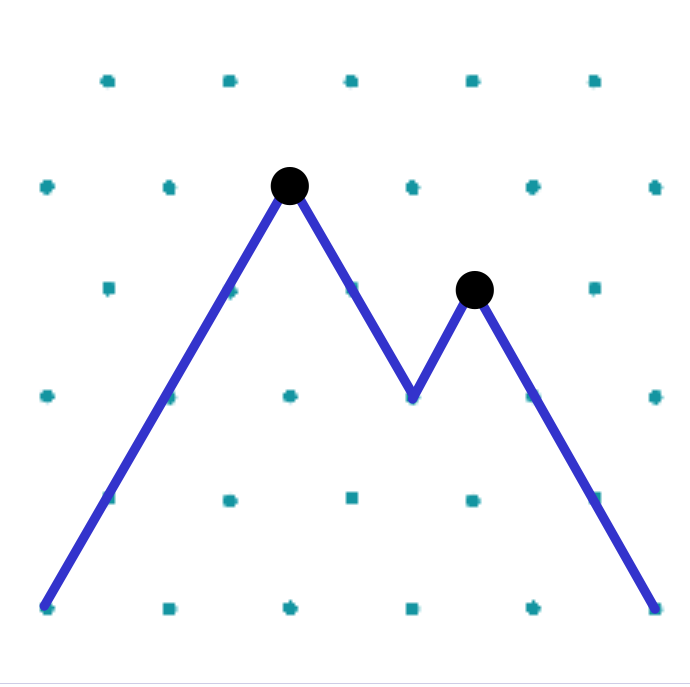}
  \hspace{1cm}
  \includegraphics[scale=0.1]{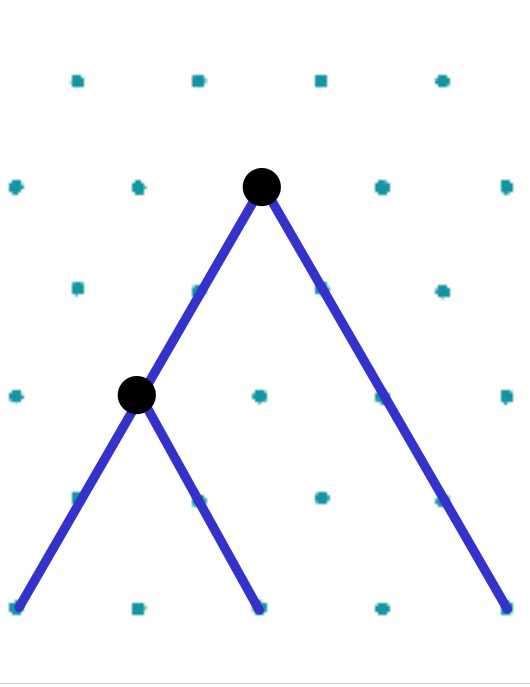}
  \hspace{1cm}
  \includegraphics[scale=0.1]{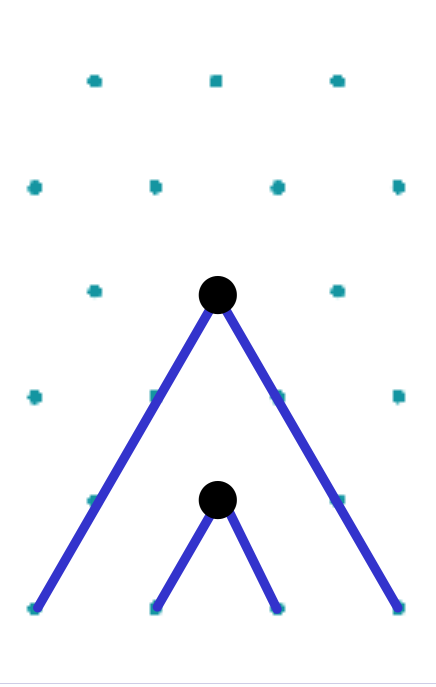}
\end{center}

It is clear that the relative height is at most equal to the absolute height 
for each part.  
Therefore, relative heights can be at most $\ell-1$.  

\begin{prop}
\label{propAfterRoundH}
  For an arbitrary but fixed $h=$0, 1,  \ldots, $\ell-1$, 
  after the steps (P), (DL), and (S) or (F) 
  are run for each appearing part in the intermediate diagram, 
  the leftover parts at the end cannot have relative height $h$ or smaller.  
\end{prop}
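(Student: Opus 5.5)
Two things need to be shown. First, a part that survives round $h$ cannot be assigned any relative height $h' < h$. Second, it cannot be assigned height $h$ itself. The plan is to introduce the quantity
\[
c(n) := \#\{\text{parts directly below } n \text{ and above both drawn legs of its current neighbours}\},
\]
computed in the current (possibly folded) diagram. The whole argument then rests on one monotonicity claim: during round $h$, and across later rounds, $c(n)$ never decreases for a part $n$ that has not yet been assigned a height. Granting that claim, both conclusions follow by induction on $h$, with the base case coming from the fact that $c(n) \geq 0$ trivially.

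For the inductive step, I fix a leftover part $n$ after round $h$. During round $h$, step (S) was applied to $n$ at the moment it was visited, so at that moment $c(n) > h$. Any changes to $n$'s environment afterwards in the same round come only from folds (F) around parts $m$ to the right of $n$ with $c(m) = h$. Such a fold can affect $n$ in only one way: if $m$ was $n$'s succeeding part, then $n$'s new succeeding neighbour becomes the part after $m$.

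The key local lemma I would prove is the following. When the fold identifies the vertical lines through $m-(h+1)$ and $m+(h+1)$, the new neighbour's left leg, read in the folded diagram, sits no higher on the column of $n$ than $m$'s left leg did. It follows that the region "below $n$ and above both legs" can only gain parts.

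The geometric input for this lemma is that $m$ has exactly $h$ parts beneath it above the legs. Its left leg therefore cuts columns at depth $h+1$ below $m$, and folding by width $h+1$ on each side maps $m$'s left leg exactly onto the continuation of the right neighbour's left leg. This is precisely the reason the fold width is $h+1$. I would also use the $k=1$ admissibility remark from Subsection \ref{subsecRelHghtAlgoRemarks}: no wedge contains another, so the new leg cannot dip above $n$'s lower parts.

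With monotonicity in hand, the conclusion is immediate. The value $c(n) > h$ observed when $n$ was skipped persists to the end of round $h$, so $n$ does not have relative height $h$. By the inductive hypothesis it already had $c(n) > h-1$ at the end of round $h-1$, which excludes every height $h' < h$. Since $n$ was not assigned in round $h$, the only way it could now receive a height $\leq h$ would be for $c(n)$ to drop to $h$ or below, and monotonicity rules this out.

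The main obstacle is the local fold lemma, and specifically the case where the fold conceals parts lying strictly between $n$ and the new neighbour. There I must check that concealed parts never sat directly below $n$, because otherwise $c(n)$ could decrease. I would handle this by noting that the concealed interval is $(m-(h+1), m+(h+1))$, and that $n$ lies left of $m$ with its right leg not crossing $m$'s wedge, by the $k=1$ non-containment property. Hence the columns below $n$ that count toward $c(n)$ lie weakly left of $m-(h+1)$ and are untouched by the fold. I would verify this boundary case, where $n = m-(h+1)$ exactly, separately.
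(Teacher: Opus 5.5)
Your strategy is the paper's: the count below a part and above its neighbours' legs cannot drop under a fold, because the new neighbours' legs only move down, and a fold conceals only the designated part. However, the step you single out as the main obstacle rests on the wrong fact and fails as stated.

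Non-containment of wedges does not put the predecessor $n$ of the folded part $m$ to the left of $m-(h+1)$. Wedges may overlap, so $n$'s right leg can cross $m$'s wedge. Also, two level parts at distance $2$ are admissible (they are the relative-height-zero parts of a base partition), although $2<h+1$ once $h\ge 2$. What forces the separation is $c(m)=h$ itself. Write $r_x$ for the row of $x$ in the array and $d=m-n$.
\begin{itemize}
\item The $h$ positions under $m$ lie above the right leg of $n$, so $r_n-d\le r_m-2h-2$.
\item Admissibility puts $n$ strictly above the left leg of $m$, so $r_n>r_m-d$.
\end{itemize}
Together these give $d>h+1$. This is precisely how the paper argues: regions A and B are empty by admissibility, and regions C and D are empty because otherwise the designated part's relative height would be smaller.

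You must run the same argument for the successor $M$ of $m$, to get $M>m+h+1$. Otherwise $M$ itself might be concealed, and your lemma about ``the new neighbour's left leg'' would concern the wrong part. Your worry about concealed positions directly below $n$ is misdirected: they sit in $n$'s own column and vanish only if $n$ does. The real claim is that no part other than $m$ lies in the concealed strip. Also, the fold carries $m$'s left leg \emph{exactly} onto $M$'s only when $M$ is the neighbour determining $c(m)$. In general $M$'s leg lands weakly lower, which is all you need.

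Second, your local lemma covers only folds around $n$'s succeeding part, but your induction also needs the mirror case. To deduce that step (S) was applied when $n$ was visited in round $h$, you must know $c(n)\ge h$ at that moment, since the algorithm says nothing when $c(n)<h$. Between the end of round $h-1$ and that visit, round $h$ performs folds around parts smaller than $n$, and these can replace $n$'s preceding part. You therefore need that the new preceding part's right leg, read in the folded diagram, meets $n$'s column no higher than the old one did. The computation is symmetric, using $c(p)=h$ for the folded predecessor $p$, and the paper states both directions explicitly (for the preceding and for the succeeding part of the folded part). With these two repairs your argument coincides with the paper's proof.
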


It follows that the leftover parts after \emph{round} $h$ 
cannot have absolute height $h$ or smaller, either.  
It also follows that after the termination of the algorithm, 
all parts will be assigned a relative height, 
and these will necessarily be unique.  

\begin{proof}

  We will work with a chosen and fixed $h$ = 0, 1, \ldots, $\ell-1$.  
  Before and after a folding operation, we will show two things: 
  
  {\bf (i)} There can be no parts whose 
  relative heights become smaller after a folding operation.  
  
  {\bf (ii)} A folding operation conceals one and only one part.  
  This is necessarily the part the relative height of which is 
  declared $h$.  
  
  Both of these claims can be shown with a visual aid.  
  \begin{figure}
    \includegraphics[scale=0.2]{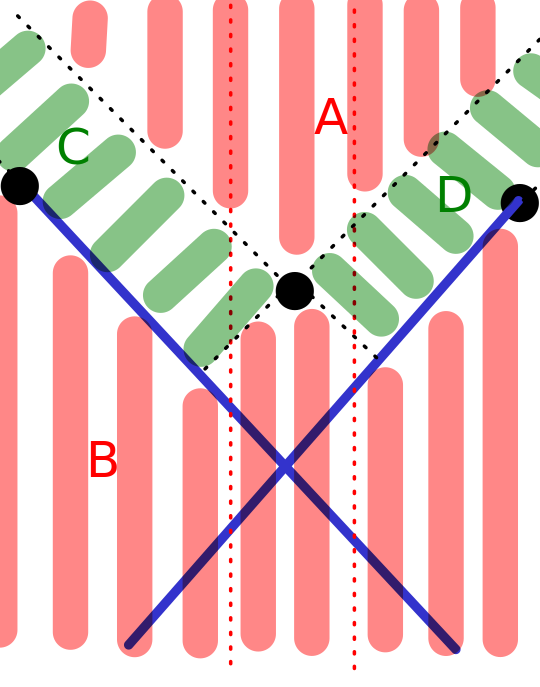} \\
    \caption{Before a folding operation on a designated part}
    \label{figFoldingOpnBefore}
  \end{figure}
  Figure \ref{figFoldingOpnBefore} depicts a folding operation aroud 
  the designated part $n$.  
  There may or may not be a preceding ($m$) or a succeeding part ($M$), 
  in which case the discussion is simplified.  
  
  We see that there can be no parts in regions A or B, 
  because otherwise the CMPP conditions for $k = 1$ would not hold.  
  There cannot be any parts in regions C or D, 
  because otherwise the relative height of the designated part 
  would have been smaller.  
  If we think in terms of induction, 
  a smaller relative height is not possible.  
  \begin{figure}
    \includegraphics[scale=0.2]{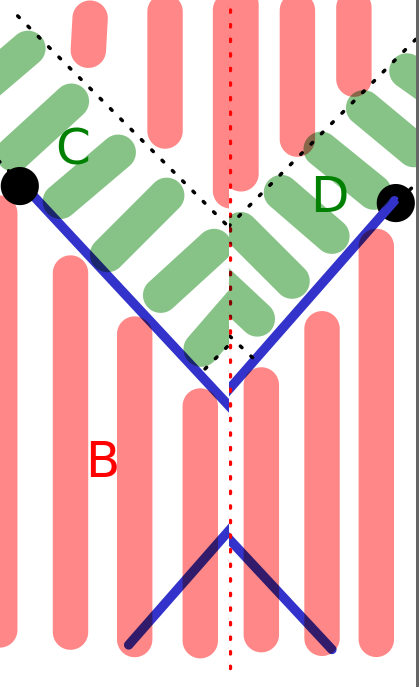} \\
    \caption{After a folding operation on a designated part}
    \label{figFoldingOpnAfter}
  \end{figure}
  Figure \ref{figFoldingOpnAfter} shows the locality of the hidden part after the folding.  
  The right leg of the preceding part of $M$ may only have moved downward, 
  because there are no parts in region C.  
  Similary, the left leg of the succeeding part of $m$ 
  may have only moved downward because there are no parts in region D.  
  Therefore, after a folding operation during round $h$, 
  no part that remains visible on the diagram 
  may have an implied relative height smaller than $h$.  
  
  For the middle part in other succesive triples of parts, 
  the positions of the right leg of the preceding part 
  and the left leg of the succeeding part remain unchanged.  
  This follows from the fact that there are no parts in regions A, C or D.  
\end{proof}

\section{Definition and Some Properties of Moves}
\label{secMoves}

After the determination of relative heights, 
we can describe moves, 
and prove some of their properties.  

\begin{defn}
\label{defFwdMove}
For a \emph{forward move} on the designated part $n$ with relative height $h$, 
consider the two possibilities in the following picture.  
\begin{center}
  \includegraphics[scale=0.15]{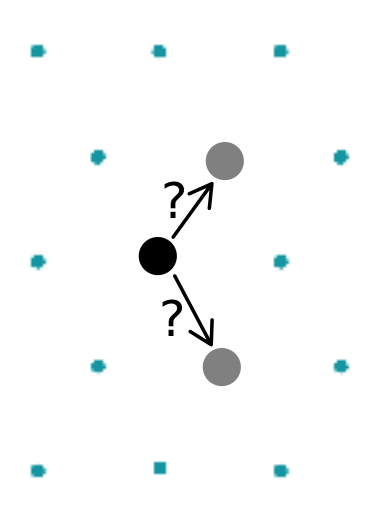}
\end{center}
Except for one case, 
we perform the move which satisfies the four properties below: 

(K1) The move preserves the CMPP condition for $k=1$.  

(M0) The move preserves the relative height of $n$.  

(MP) If there is a preceding part, 
the move preserves the relative height of the preceding part.  

(MS) If there is a succeeding part, 
the move preserves the relative height of the succeeding part.  

If at least one of the four conditions are violated for 
either possibility, the move is not allowed.  

The exceptional case is when the suceeding part is $n+2h+2$, 
it is level with $n$, 
and with a strictly greater relative height $h'$.  
\begin{center}
  \includegraphics[scale=0.2]{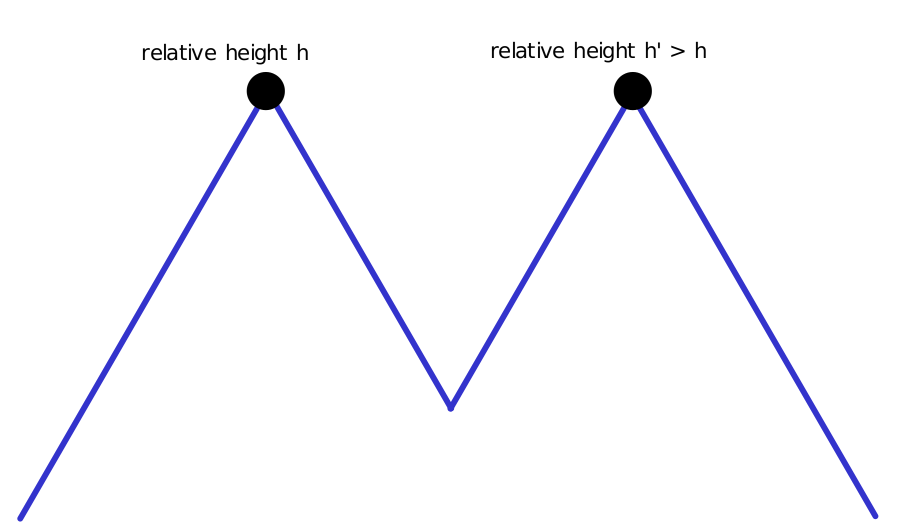}
\end{center}
In this case, 
we switch the relative heights of $n$ and $n+2h+2$, 
try the move on $n+2h+2$.  
If the move on $n+2h+2$ succeeds, the relative height swap is kept.  
If not, both $n$ and $n+2h+2$ retain their original 
relative heights $h$ and $h'$, respectively.  
\end{defn}

A \emph{backward move} is defined likewise.  
The exceptional case is 
when part $n$ with relative height $h$ 
is preceded by a level $n-2h-2$ with strictly greater relative height $h'$.  
In this case, we immediately switch relative heights.  
This is also necessitated by the determination of relative heights. 
In particular, the relative height of $n-2h-2$ is now $h$, 
and the relative height of $n$ is now $h'$, 
and the part which is trying to be moved is $n-2h-2$.  
This configuration is kept even if a backward move is not possible on $n-2h-2$.  

Of course we need to show that when a move is possible, 
it is unambiguously defined.  
We will show when a move is possible, 
exactly one of the possibilities 
keep the relative height of the designated part $n$, 
and the other either changes it by $\pm 1$, 
or the resulting partition is not admissible anymore.  
We examine several cases.  

If the relative heights and the absolute heights are equal for $n$ 
and the potential $(n+1)$'s, 
then the forward move is uniquely possible, 
because exactly one of the $(n+1)$'s in the picture in Definition \ref{defFwdMove} 
will have the same absolute height as $n$, and the other will have 
absolute height that differs by $\pm 1$.  


In case the right leg of the preceding part 
or the left leg of the suceeding part contains an $n$ in them, 
there are two exclusive and complementary cases.  
One of them is that the $n$ on the right leg of the preceding part 
has strictly greater absolute height, 
or the left leg of the suceeding part does not contain an $n$.  
\begin{center}
  \includegraphics[scale=0.15]{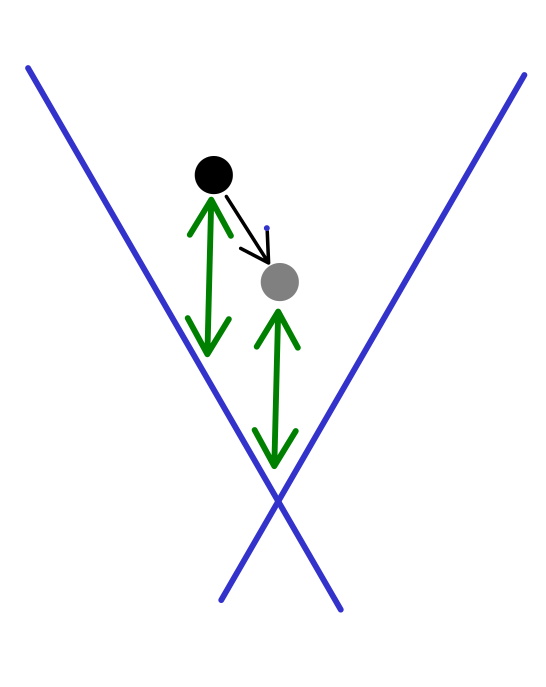}
\end{center}
In this case, only the $n+1$ with the smaller absolute height 
has the same relative height as $n$, 
and the other has relative height one more than that of $n$, 
i.e. the move is uniquely defined.  

The other case is when the left leg of the succeeding part 
contains an $n$ with equal or greater absolute height 
of the $n$ in the right leg of the preceding part, 
or the left leg of the preceding part has no $n$'s.  
In this case, only the $n+1$ in the picture in Definition \ref{defFwdMove} 
with the greater absolute height will have the same relative height as 
the $n$ we are trying to move forward, 
and the other will have exactly one less.  
\begin{center}
  \includegraphics[scale=0.15]{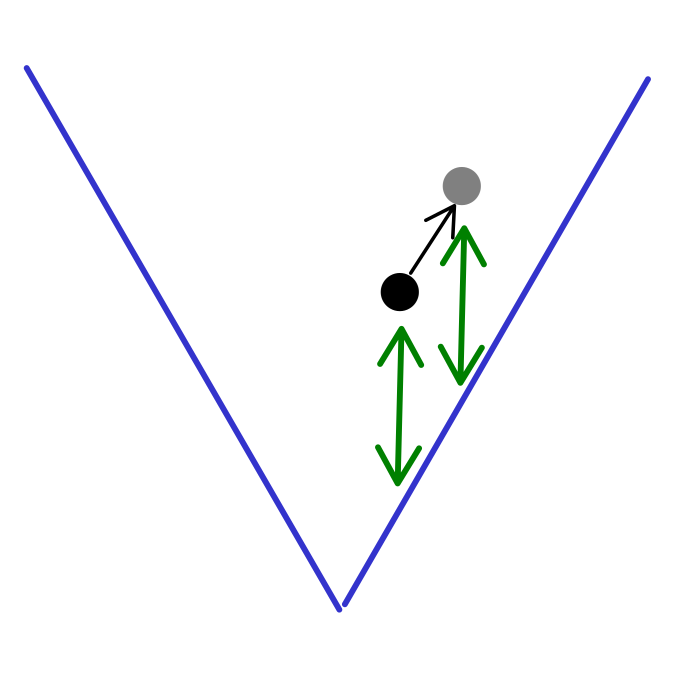}
\end{center}
In the above discussion, 
the preceding part of $n$ may refer to 
a preceding part of $n$ which became the immediately preceding part of $n$
after a few folding operations.  
It is likewise for the succeeding part of $n$.  
We now know that a possible move is unambiguously defined.  
About the possibility of moves, 
we will prove a claim that is enough for our purposes 
instead of the fully general case.  

\begin{prop}
\label{propPossibleMoves} 
  Let $\doublestroke{\lambda}$ be a CMPP partition for $k=1$, 
  and $h = $ 0, 1, \ldots, $\ell-1$ be arbitrary but fixed.  
  Let $m < n$ be parts of $\doublestroke{\lambda}$ with the same relative height $h$ 
  such that there are no parts $r$ 
  with relative height $h$ or less between $m$ and $n$.  
  \begin{enumerate}[{\bf (i)}]
   \item If the forward movement of $m$ does not alter 
    the relative heights of the parts preceding $m$, 
    then $m$ can make a forward move unless $n = m+2h+2$, 
    in which case $m$ and $n$ are level.  
   \item If the backward movement of $n$ does not alter 
    the relative heights of the parts succeeding $n$, 
    then $n$ can make a backward move unless $m = n-2h-2$, 
    in which case $m$ and $n$ are level.  
  \end{enumerate}

\end{prop}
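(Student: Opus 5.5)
By the symmetry of the CMPP diagram under the reflection $x \mapsto -x$ (which interchanges left and right legs, forward and backward moves, and preceding and succeeding parts), part (ii) is the mirror image of part (i). I will therefore prove (i) only, and then say which ingredients change under the reflection.

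The plan is to reduce to the folded picture at the moment the algorithm of Subsection \ref{subsecRelHghtDeterm} assigns relative height $h$ to $m$. By Proposition \ref{propAfterRoundH} and its proof, at that moment the following hold:
\begin{itemize}
\item every part of relative height $<h$ has already been concealed;
\item each folding concealed exactly one part and only lowered the relevant legs;
\item the parts strictly between $m$ and $n$, all of relative height $>h$, are still visible.
\end{itemize}
Within round $h$, $n$ is the first part after $m$ receiving height $h$. So, relative to the legs drawn in step (DL), $m$ has exactly $h$ parts below it, and its succeeding part $M$ in the folded diagram has relative height $\ge h$. The hypothesis that moving $m$ does not disturb the parts preceding $m$ lets me ignore everything to the left. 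It also keeps the preceding leg, and hence condition (MP), intact.

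Next I will locate the two candidate $(m+1)$'s and check (K1), (M0) and (MS) for the candidate singled out by the case analysis after Definition \ref{defFwdMove}. That case analysis compares the $n$'s on the right leg of the preceding part with those on the left leg of the succeeding part, and it already shows that exactly one candidate preserves the relative height $h$ of $m$. So (M0) holds for that candidate, and it remains to verify (K1) and (MS).

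For (K1), I will argue that admissibility for $k=1$ could only fail if the wedge of the moved $m+1$ contained, or was contained in, the wedge of the next visible part. In the folded diagram, that part must have relative height $\ge h$ and sit above the current envelope. This can only happen if the next part of height $\le h$, namely $n$, lies at distance exactly $2h+2$ from $m$ and is level with it. Distance $2h+2$ is precisely the width of the fold $[m-(h+1), m+(h+1)]$, doubled by the fold performed at $n$. For (MS), I will show that the right leg of $m+1$ still lies below the left leg of $M$ except in that same configuration $n=m+2h+2$. Consequently, when the algorithm is rerun, the part $M$ (and any part of height $>h$ between $m$ and $n$) still sees the same count of parts below it and above the legs. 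Rerunning the folds in the same order then reproduces all previously assigned heights.

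The main obstacle is the (MS) verification when there are intermediate parts of height $>h$ between $m$ and $n$. Those parts are assigned their heights only in later rounds, after further folds, so their heights could in principle shift. To handle this, I will argue by induction on the rounds $h' > h$. The region swept by the move from $m$ to $m+1$ is concealed by the fold around $m$ or $m+1$ in round $h$ in both the old and the new partition, and the folded diagrams after round $h$ coincide up to a shift. The later rounds therefore proceed identically, and the relative heights of all parts after $m$ are unchanged. The degenerate level case $n=m+2h+2$ is exactly where the two folds would overlap after the move, which is why it is excluded.

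For (ii), I will apply the same argument reflected. The exception $m=n-2h-2$ then appears as the mirror of $n=m+2h+2$, which matches the relative-height swap convention for backward moves.
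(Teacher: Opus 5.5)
Your proof breaks down in the case that the paper's proof singles out as the delicate one. There, the immediate successor $r$ of $m$ equals $m+2h+2$, is level with $m$, and has relative height $h'>h$. Your (K1) and (MS) steps claim that the moved $m+1$ can clash with the next visible part only when that part is $n$ and $n=m+2h+2$. This is false.

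Here is a concrete counterexample. Take $\ell=2$, $k_0=1$, parts $4$ and $6$ of absolute height $1$, and $20$ of absolute height $0$. The algorithm gives $4_{(0)}+6_{(1)}+20_{(0)}$: the left leg of $6$ cuts the count of $4$ to zero, and after the fold around $4$ the part $6$ is skipped in round $0$. So $m=4$, $n=20$, $h=0$ satisfy every hypothesis of (i), with $n\neq m+2$. Yet both candidate $5$'s lie on a downward path through the $6$, so (K1) fails for both.

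For $h\geq 1$ the same level configuration also blocks the direct move. The lower candidate $m+1$ has only $h-1$ positions directly below it above the left leg of $r$, which violates (M0). The right leg of the upper candidate cuts the count of $r$ to $h-1$, which violates (MS).

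So no direct move of $m$ exists in this configuration. Statement (i) holds only because of the exceptional rule in Definition \ref{defFwdMove}: swap the relative heights of $m$ and $r$, then move $r$. In the example this yields $4_{(1)}+7_{(0)}+20_{(0)}$, with $7$ of absolute height $0$. Your proposal never invokes that rule, so its justification is what is missing:
\begin{itemize}
\item After the swap and the move $r\to r+1$, the algorithm must really assign $h$ to $r+1$ and $h'$ to $m$. The paper checks this by showing that folding around $m$ in the old partition and around $r+1$ in the new one give the same intermediate configuration. Your idea of comparing folded diagrams is the right tool here.
\item $r$ must actually be free to move. This needs that $r$ cannot be followed by a part $\leq r+2h+2$, and that $n\neq s+2h+2$ for the predecessor $s$ of $n$.
\end{itemize}

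Separately, (ii) is not literally the mirror image of (i). The algorithm scans from smallest to largest, so in a level pair at distance $2h+2$ it is the left part that receives the smaller relative height. The reflection $x\mapsto -x$ therefore does not send relative heights to relative heights. Correspondingly, the backward exceptional rule (an immediate swap, kept even if the move fails) is not the reflection of the forward one (a tentative swap). Part (ii), including its own version of the level-pair case above, has to be checked directly rather than deduced by symmetry.
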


\begin{proof}
  When $n = m+2h+2$, $m$ and $n$ has to be level 
  so as to not force the lower one to have smaller relative height.  
  \begin{center}
    \includegraphics[scale=0.15]{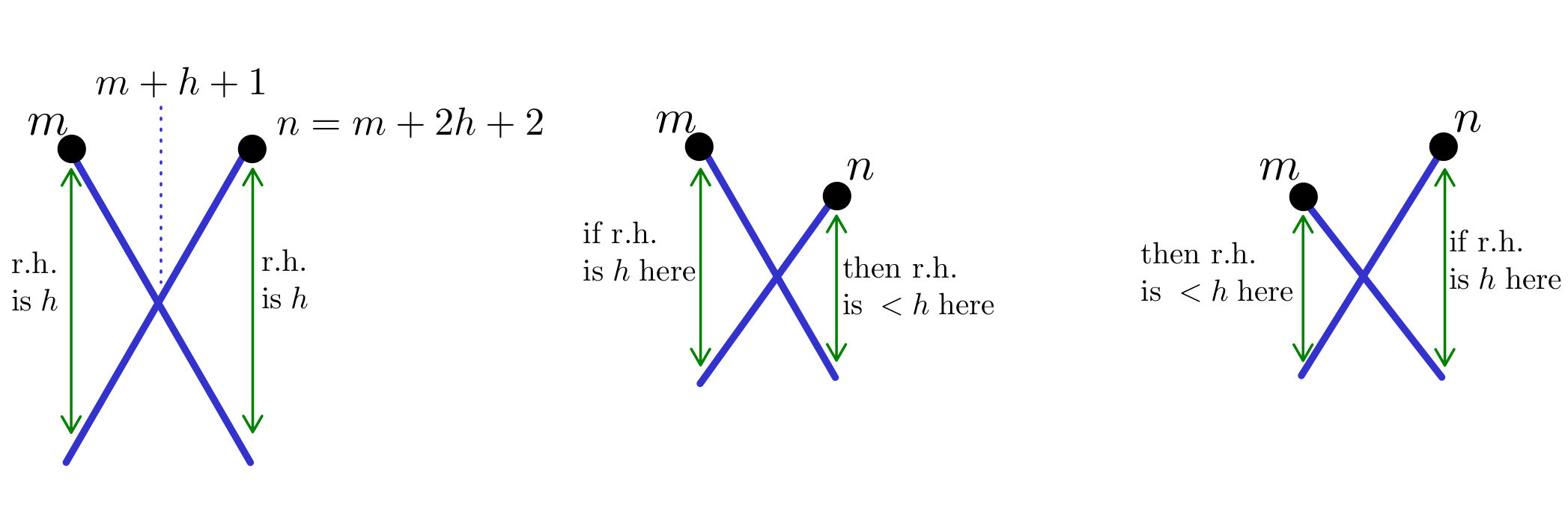}
  \end{center}
  When $n = m+2h+2$, $m$ and $n$ are level, and $m$ is moved forward, 
  either $m$ or $n$ will be forced to have relative height $h-1$, 
  or an inadmissible CMPP partition will arise for $h = 0$.  
  This can be seen in the second and the third pictures above, 
  if one changes $m$ to $m+1$ there.  
  
  These prove the impossibilities.  
  For the affirmative parts, we examine several exclusive cases.  
  
  Suppose there are no other parts between $m$ and $n$, and $n > m+2h+2$.  
  If $m$ and $n$ are level, 
  then the relative heights of neither is determined by the relevant leg of the other.  
  So, both options for moving $m$ forward is safe as far as 
  the relative height of $n$ is concerned.  
  We can make the unique forward move on $m$ 
  which chooses the $(m+1)$ with relative height $h$.  
  By the hypothesis, the move does not alter the relative heights of the preceding parts.  
  Similarly, we can make a backward move on $n$.  
  
  
  If $n$ is above $m$, 
  then the relative height of $m$ may be determined by the left leg of $n$, 
  but the relative height of $n$ cannot be determined by the right leg of $m$.  
  So, we can move $m$ forward in the direction of its left arm.  
  If $n$ is above $m$ but the relative height of $m$ is not determined 
  by the left leg of $n$, then we move $m$ forward as in the previous paragraph.  
  

  If $n$ is below $m$, 
  then the relative height of $n$ may be determined by the right leg of $m$, 
  but the relative height of $m$ cannot be determined by the left leg of $n$.  
  So, we can move $m$ forward in the direction of its left leg.  
  If $n$ is below $m$ but the relative height of $n$ is not determined 
  by the right leg of $m$, then we move forward as in the second previous paragraph.  
  

  We treat the backward move on $n$ similarly.  
  
  If there are any parts between $m$ and $n$, 
  let $r$ be the succeeding part of $m$, 
  and $s$ be the preceding part of $n$.  
  Both $r$ and $s$ have relative heights greater than $h$ by the hypothesis.  
  In fact, $r$ and $s$ may be the same part.  
  
  The forward move on $m$ is possible 
  by the same arguments as above.  
  Moreover; there are simplifications, 
  because the relative height of $r$ 
  may not be determined by the right leg of $m$.  
  We fold around $m$ before we set the relative height of $r$.  
  Likewise, a backward move on $n$ is possible.  
  
  The case we need to examine more carefully is the case 
  in which $r = m+2h+2$ and $m$ and $r$ are necessarily level.  
  By the exceptional move, 
  we first switch the relative heights of $m$ and $r$, 
  and justify the possibility of moving $r$ forward.  
  
  Note that we cannot have $n = s+2h+2$, 
  because this forces either $s$ to have relative height $h$ or less, 
  or $n$ to have relative height strictly less than $h$.  
  \begin{center}
    \includegraphics[scale=0.2]{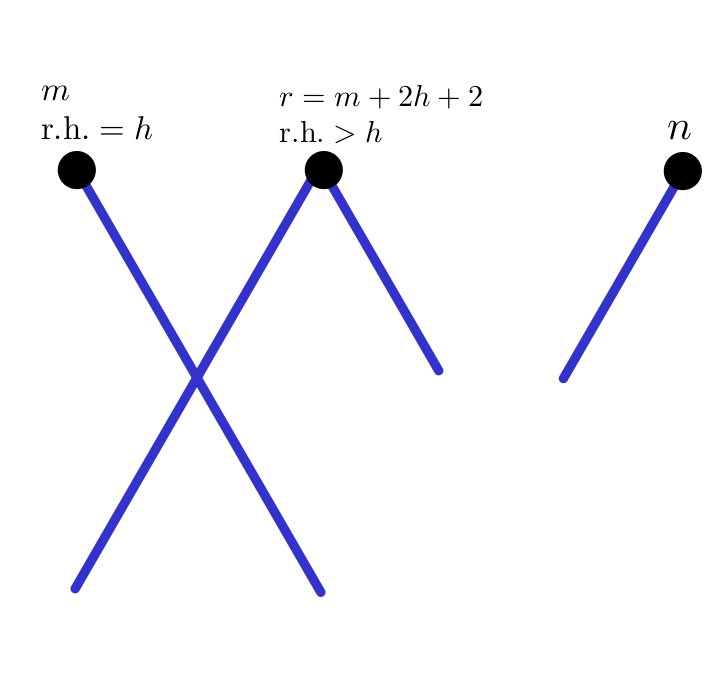}
    \raisebox{2cm}{$\longrightarrow$}
    \includegraphics[scale=0.2]{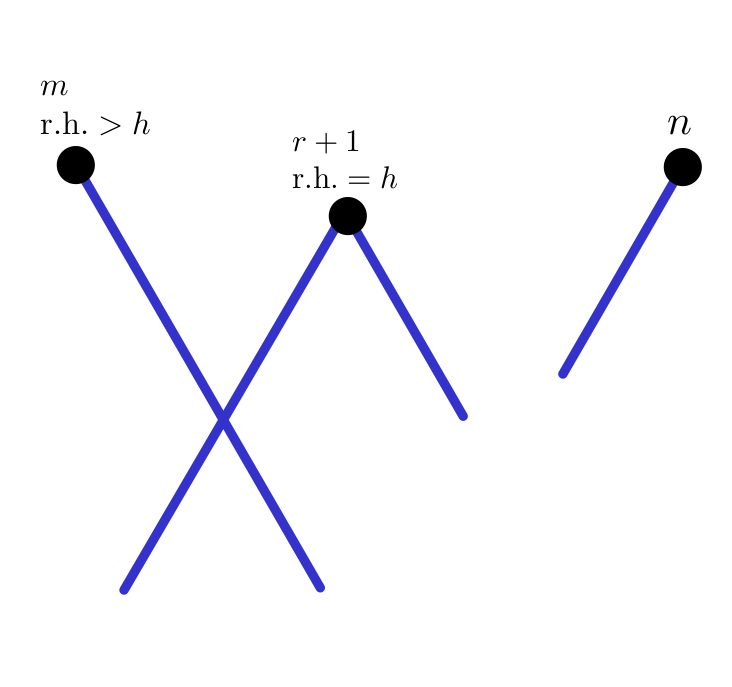}
  \end{center}
  In the former picture above, 
  the relative height of $m$ is necessarily determined by the left leg of $r$, 
  and we fold around $m$ before we fold around $r$. 
  In the latter picture, 
  the relative height of $r+1$ is determined by the right leg of $m$, 
  and we fold around $r+1$ before we fold around $m$.  
  The intermediate configurations we obtain 
  after folding around $m$ in the first picture 
  and after folding around $r+1$ in the second picture are the same.  
  This justifies the relative height switch 
  and the possibility of the forward move afterwards.  
  
  Also note that when $r = m+2h+2$ and $m$ and $r$ are level, 
  $r$ cannot be suceeded by a part $r+2h+2$ or less, 
  because this either forces $r$ to have relative height $h$ or less, 
  or the succeeding part to have relative height strictly less than $h$.  
  Both of there are contrary to the hypotheses.  
  
\end{proof}

One can observe that when a part $n$ with relative height $h$
succeeds a part $s$ with relative height greater than $h$, 
$n$ can move backward until it becomes $s+2h+2$ and 
at which time it necessarily becomes level with $s$, 
and $s$ and $s+2h+2$ will switch relative heights.  
In fact, this is independent of the existence of a part $m<s$ 
with relative height $h$.  
Therefore; a part with relative height $h$ can move backwards 
and become smaller than parts with larger relative heights.  
This is the key in constructing the base partitions further below, 
and establish their uniqueness.  
To see that the conditions of the claim is satisfied 
when all parts succeeding $n$ have relative heights $h$ or greater, 
remember that the relative height of $n$ is assigned before all parts succeeding it, 
so the right leg of $n$ cannot be used in determining the relative height 
of any part succeeding it.  

We now construct the base partitions.  

\begin{prop}
\label{propBasePtns}
  For arbitrary but fixed $i=$ 0, 1, 2, \ldots, $\ell$, 
  set $k_i = 1$ and the rest of the $k_\cdot$'s $=0$.  
  Choose and fix non-negative integers $n_1$, $n_2$, \ldots, $n_\ell$.  
  Then, there is a unique admissible CMPP partition 
  having exactly $n_s$ parts with relative height $s-1$ 
  for $s = $ 1, 2, \ldots, $\ell$, 
  and its weight is 
  \begin{align}
  \label{eqBasePtnWeight}
    N_1^2 + N_2^2 + \cdots + N_\ell^2 
    + L_{\ell, i}(N_1, N_2, \ldots, N_\ell)
  \end{align}
  where $N_s = n_s + n_{s+1} + \cdots + n_\ell$ 
  for $s = $ 1, 2, \ldots, $\ell$, 
  and the linear forms $L_{\ell, i}(N_1, N_2, \ldots, N_\ell)$ 
  follow the pattern below.  
  {\allowdisplaybreaks \begin{align*}
    L_{\ell, 0}(N_1, N_2, \ldots, N_\ell) 
      & = N_1 + N_2 + \cdots + N_\ell \\ 
    L_{\ell, \ell}(N_1, N_2, \ldots, N_\ell) 
      & = N_2 + N_3 + \cdots + N_\ell \\ 
    L_{\ell, 1}(N_1, N_2, \ldots, N_\ell) 
      & = N_3 + N_4 + \cdots + N_\ell \\ 
    L_{\ell, \ell-1}(N_1, N_2, \ldots, N_\ell) 
      & = N_4 + N_5 + \cdots + N_\ell \\ 
    & \vdots
  \end{align*}}
\end{prop}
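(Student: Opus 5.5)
Existence will come from an explicit construction and uniqueness from the backward moves of Proposition \ref{propPossibleMoves}; the weight is then a direct summation. I build the base partition layer by layer, from the highest relative height down. For uniqueness, fix $n_1,\dots,n_\ell$ and take any admissible CMPP partition with $n_s$ parts of relative height $s-1$. Process the relative heights $h=0,1,\ldots,\ell-1$ starting with the smallest parts of relative height $h$. Using the remark after Proposition \ref{propPossibleMoves}, push each part of relative height $h$ as far backward as possible. Such a part passes parts of larger relative height by the level relative-height switch, until it is either blocked by its predecessor of relative height $h$ at distance $2h+2$ and level with it, or blocked by the initial condition. The hypothesis of Proposition \ref{propPossibleMoves}(ii) holds because all parts succeeding the moved part have relative height at least $h$, and the right leg of a part is never used to determine the relative heights of later parts. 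Backward moves strictly decrease the weight and preserve the multiset of relative heights, so the process terminates. It ends at a configuration in which no backward move is possible. By Proposition \ref{propPossibleMoves}, in that configuration consecutive parts of equal relative height $h$ sit exactly $2h+2$ apart and level, and the smallest one sits as low as the initial condition permits. This rigidity determines the final configuration from $(n_1,\ldots,n_\ell)$ and $i$ alone, and it gives the unique base partition.

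To compute the weight, I describe the base partition explicitly. Its parts of relative height $s-1$ form level chains with spacing $2s$, and the chains are nested so that the $N_s$ parts of relative height $\ge s-1$ are the relevant objects. Equivalently, I think of $N_\ell$ parts at the top, then add the $n_{\ell-1}$ parts of relative height $\ell-2$ underneath, and so on. In the ``staircase'' picture, the $j$-th smallest part (for $j=1,\ldots,N_1$) has size $2j-1$ plus the number of $s\ge2$ with $j> N_1-N_s$, up to a shift. Summing layers, $\sum_{j=1}^{N_s}(2j-1)=N_s^2$, which produces $N_1^2+\cdots+N_\ell^2$. The contribution of the initial condition $k_i=1$ is an extra $+1$ per part in certain layers. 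When $i=0$ the smallest part cannot be $1$ on the bottom row, which shifts every layer by one and gives $N_1+\cdots+N_\ell$. When $i=\ell$ the first layer starts at $1$ freely, giving $N_2+\cdots+N_\ell$. The alternation $0,\ell,1,\ell-1,\dots$ arises because the position of $k_i$ determines at which absolute height the lowest allowed part sits, and hence how many bottom layers escape the shift. I would verify this by checking the smallest part of each relative height against the right leg of $k_i$.

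The main obstacle is this last identification: proving that in the frozen configuration the smallest part of relative height $h$ sits at exactly the predicted size for each of the $\ell+1$ choices of $i$, with the relative-height switches happening consistently. I would first do small cases ($\ell=1,2$, comparing with Rogers–Ramanujan–Gordon base partitions) and then argue by induction on $\ell$. Removing the top layer of relative height $\ell-1$ (the $N_\ell$ parts) and folding reduces the problem to a diagram with $\ell-1$ rows and a relabelled initial condition. That relabelling is exactly the map $0\to\ell\to1\to\ell-1\to\cdots$, which explains the index lineup and shifts each $N_s$ index by one in $L_{\ell,i}$.
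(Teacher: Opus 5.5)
Your outline matches the paper's starting point: in a configuration admitting no backward move, parts are sorted by relative height, and consecutive parts of relative height $h$ are level and $2h+2$ apart. However, the step that actually proves the proposition is missing.

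\textbf{Uniqueness and the position of each layer.} Proposition~\ref{propPossibleMoves}(ii) only tells you when a part is blocked by a predecessor of the \emph{same} relative height. It says nothing about where the smallest part of relative height $h$ comes to rest behind the last part of smaller relative height. Nor does it say where the first part comes to rest against $k_i$. So ``this rigidity determines the final configuration'' is an assertion, and it is exactly the identification you defer as the main obstacle.

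The paper settles it by an explicit left-end analysis. For $k_j=1$ with $j\ge 1$, the smallest part of relative height $h$ is $h+1$, just below the right arm of $k_j$, as long as both the right arm and the right leg of $k_j$ reach that far. Once the shorter of the two runs out (at relative height $2j-1$, resp.\ $2(\ell-j)$), the relative height grows only every second integer. For $k_0=1$ the smallest such part is $2(h+1)$. After a part $n$ of relative height $h$ is placed, the boxed $n-h-2$ and $n+h$ act as fresh initial conditions, so the placement restarts shifted by $2h+2$. Past the leg or arm, the last $n$ of height $h$ is tightly followed by $n+2h+2$ (height $h$) or $n+2h+4$ (height $h+1$), and so on. This forced placement gives uniqueness and the weight at once. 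The cutoff $\min(2j-1,2(\ell-j))$ is what produces the order $0,\ell,1,\ell-1,\dots$ in $L_{\ell,i}$.

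\textbf{The weight formula and the induction.} The explicit description you give is wrong. If the $j$-th part is $2j-1+\#\{s\ge 2: j>N_1-N_s\}$ plus a shift, the total is $N_1^2+N_2+\cdots+N_\ell$ plus a linear term, not $N_1^2+\cdots+N_\ell^2$. So your ``summing layers'' sentence does not follow from it.

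For $\ell=2$, $k_0=1$, $n_1=n_2=1$ your formula gives $2+5$. But the upper copy of $5$ lies on a downward path through the bottom $2$, which is the only admissible copy of $2$. So $5$ has absolute height $0$, and both parts have relative height $0$. The actual base partition is $2_{(0)}+6_{(1)}$, of weight $8=N_1^2+N_2^2+N_1+N_2$.

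The correct layered picture for $k_0=1$ is that layer $s$, i.e.\ the $N_s$ largest parts, contributes $2,4,\dots,2N_s$. This reproduces the paper's list $2,\dots,2n_1$; $2n_1+4,\dots,2n_1+4n_2$; \dots.

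Finally, the induction on $\ell$ is not coherent as stated. A diagram with $\ell-1$ copies has initial conditions $k_0,\dots,k_{\ell-1}$, so $0\to\ell\to1\to\cdots$ cannot be a relabelling onto it. Also, removing the parts of relative height $\ell-1$ does not empty the top copy: for $k_\ell=1$ the smallest part of absolute height $\ell-1$ has relative height $0$.
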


\begin{proof}
  We showed that in a minimal configuration, 
  the parts with smaller relative height 
  precede the parts with greater relative height, 
  and the parts having the same relative height $h$ 
  are level with difference $2h+2$ between them.  
  Observe that the initial conditions $k_j$ for $j =$ 1, 2, \ldots $\ell$ 
  have the same vertical alignment as $-1$'s, if we had any.  
  $k_0$ is different.  
  \begin{figure}
    \includegraphics[scale=0.3]{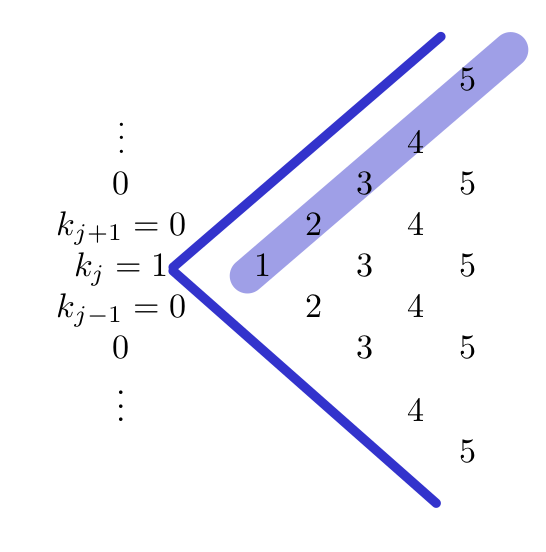} \\
    \caption{Left end of a diagram}
    \label{figLeftEnd}
  \end{figure}
  For an arbitrary but fixed $j = $1, 2, \ldots, $\ell$, 
  consider the left end of a diagram for $k_j = 1$.  
  We first want to find the smallest part with relative height $h$.  
  We cannot use any part on or above the right arm of $k_j$, 
  or any part on or below the right leg of $k_j$.  
  Since we want the smallest possible part, 
  we use the parts just below the right arm of $k_j$ 
  as marked in Figure \ref{figLeftEnd}.  
  Their relative heights are greater than the parts equal to them and below them.  
  Let us record the absolute and relative heights of these parts in the following table.  
  
  \begin{center}
  \begin{tabular}{ccc}
    part & absolute height & relative height \\ \hline 
    1 & $(j-1)$ & 0 \\
    2 & $j$ & 1 \\
    3 & $j$ & 2 \\ 
    & $\vdots$ & \\ 
    $(h+1)$ & $( j-1 + \lceil \frac{h}{2} \rceil )$ & $h$ \\ 
    & $\vdots$ & 
  \end{tabular}
  \end{center}
  
  There are two possibilities that this streak breaks.  
  Either the relative height and the absolute height become equal, 
  as shown in Figure \ref{figRightLegShorter},
  and the relevant line in the table above becomes 
  \begin{center}
  \begin{tabular}{ccc}
    part & absolute height & relative height \\ \hline 
    & $\vdots$ & \\ 
    $2j$ & $(2j-1)$ & $(2j-1)$
  \end{tabular}, 
  \end{center}
  \begin{figure}
    \includegraphics[scale=0.2]{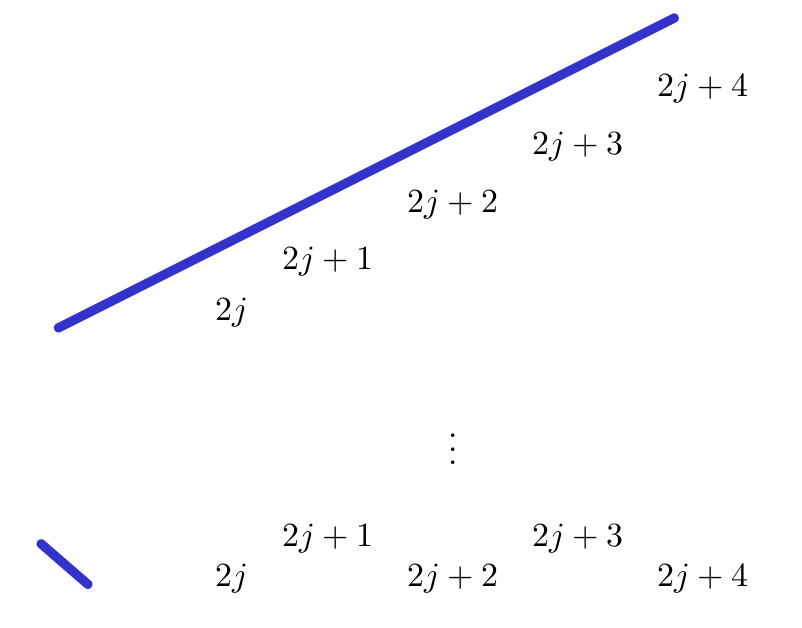} \\
    \caption{The right leg is shorter than the right arm}
    \label{figRightLegShorter}
  \end{figure}
  or the absolute height becomes $(\ell-1)$, 
  as shown in Figure \ref{figRightLegShorter},
  and the relevant line in the table becomes 
  \begin{center}
  \begin{tabular}{ccc}
    part & absolute height & relative height \\ \hline 
    & $\vdots$ & \\ 
    $2(\ell - j) + 1$ & $(\ell-1)$ & $2(\ell-j)$
  \end{tabular}.  
  \end{center}
  \begin{figure}
    \includegraphics[scale=0.2]{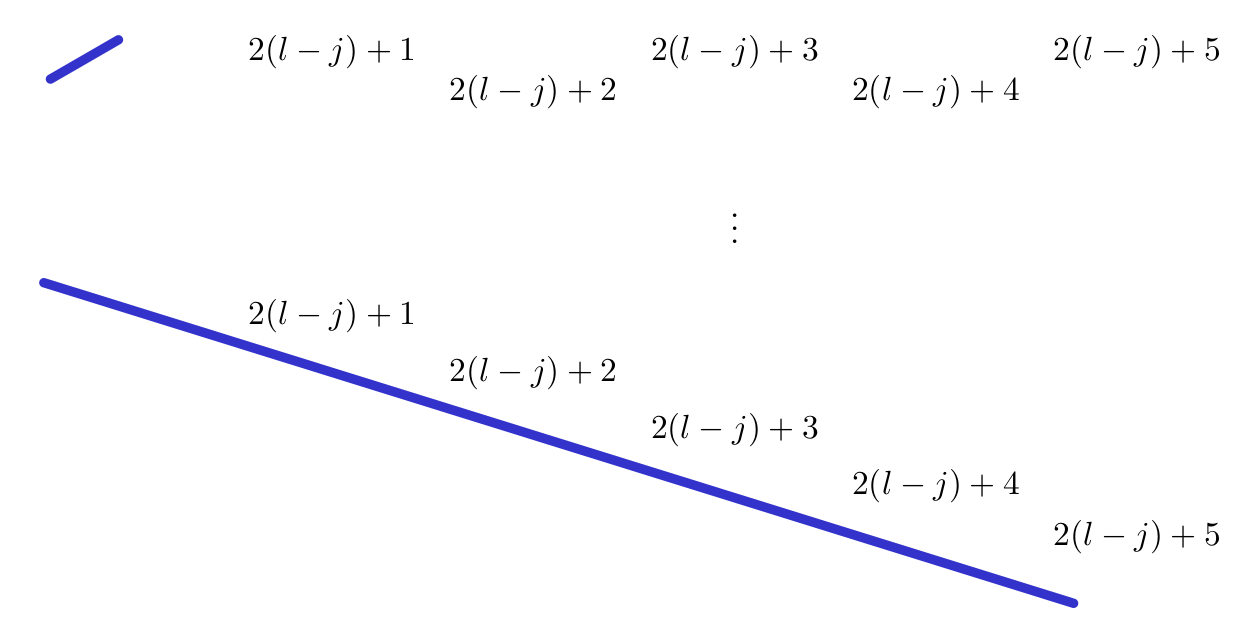} \\
    \caption{The right arm is shorter than the right leg}
    \label{figRightArmShorter}
  \end{figure}
  Due to the parity mismatch of the involved parts, 
  these cannot happen at the same time.  
  In either of these cases, 
  the relative height increases by one for every other part, 
  as opposed to every part in the original table corresponding to Figure \ref{figLeftEnd}.  
  
  In the former possibility, the table continues as 
  \begin{center}
  \begin{tabular}{ccc}
    part & absolute height & relative height \\ \hline 
    $2j$ & $(2j-1)$ & $(2j-1)$ \\ 
    $2j+2$ & $2j$ & $2j$ \\ 
    & $\vdots$ & \\ 
    $2(h-j+1)$ & $h$ & $h$ \\ 
    & $\vdots$ & \\ 
    $2(\ell - j)$ & $(\ell-1)$ & $(\ell-1)$ 
  \end{tabular}, 
  \end{center}
  and in the latter possibility as 
  \begin{center}
  \begin{tabular}{ccc}
    part & absolute height & relative height \\ \hline 
    $2(\ell-j)+1$ & $(\ell-1)$ & $2(\ell-j)$ \\ 
    $2(\ell-j)+3$ & $(\ell-1)$ & $2(\ell-j)+1$ \\ 
    & $\vdots$ & \\ 
    $2(h-\ell+j)+1$ & $(\ell-1)$ & $h$ \\ 
    & $\vdots$ & \\ 
    $(2j-1)$ & $(\ell-1)$ & $(\ell-1)$ 
  \end{tabular}.  
  \end{center}
  
  When $k_0 = 1$ and the other $k_j$'s are zero, 
  we draw the right arm of $k_0$, and the figure becomes as below.  
  \begin{figure}
    \includegraphics[scale=0.2]{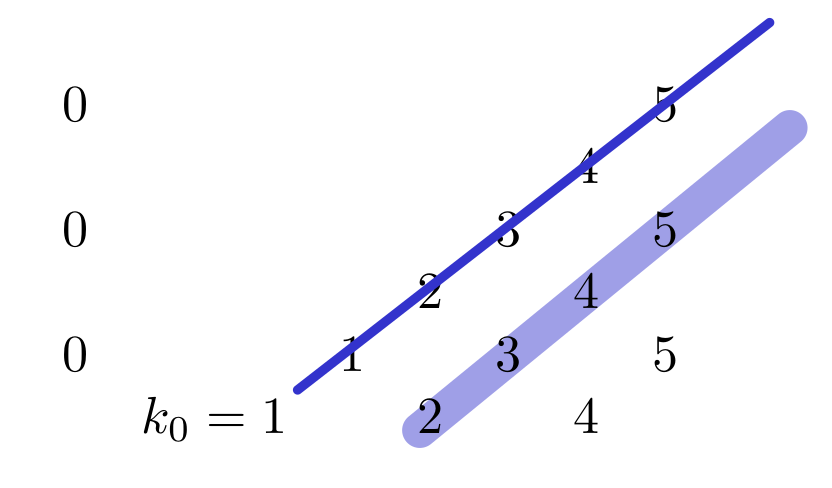} \\
    \caption{The left end of a diagram when $k_0 = 1$}
    \label{fig_k0_is_1}
  \end{figure}
  The table of the smallest parts with a given height 
  is also given below.  
  The skipped odd parts share the same relative height and absolute height 
  as the even parts preceding them.  
  \begin{center}
  \begin{tabular}{ccc}
    part & absolute height & relative height \\ \hline 
    2 & 0 & 0 \\ 
    4 & 1 & 1 \\ 
    6 & 2 & 2 \\ 
    & $\vdots$ & \\ 
    $2(h+1)$ & $h$ & $h$ \\ 
    & $\vdots$ & \\ 
    $2\ell$ & $(\ell - 1)$ & $(\ell - 1)$ 
  \end{tabular}.  
  \end{center}
  
  When a part $n$ with relative height $h$ is placed in a base partition, 
  we encounter the portion of a diagram shown in Figure \ref{fig_n_placed}.  
  \begin{figure}
    \includegraphics[scale=0.2]{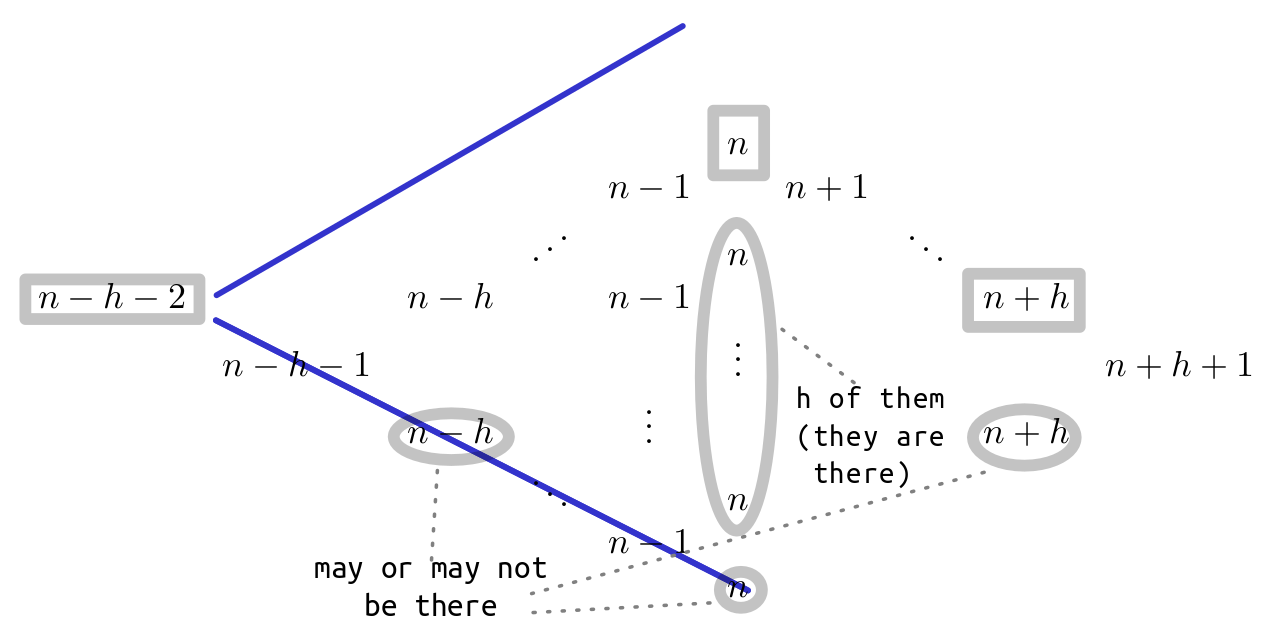} \\
    \caption{Just placed the boxed $n$}
    \label{fig_n_placed}
  \end{figure}
  $(n-h-1)$ may be zero, and hence $(n-h-2)$ may be -1.  
  In other words, $(n-h-2)$ may be 
  one of the initial conditions $k_j = 1$ for $j > 0$.  
  Because this is a tight configuration, 
  we could not have used any part on or above 
  the right arm of the boxed $n-h-2$, 
  and the relative height of $n$ 
  is determined by the right leg of the boxed $n-h-2$.  
  
  Now, the succeeding part cannot be on or above the right arm of the boxed $n+h$.  
  Because otherwise either the relative height 
  of the previously placed $n$ (the boxed $n$) is forced to be less than $h$, 
  or the resulting partition would not be admissible.  
  Similarly, the next part to be placed cannot be on 
  or below the right leg of the boxed $n+h$.  
  Therefore; when such $n$ is placed, 
  we can think of the boxed $n-h-2$ and the boxed $n+h$ 
  as initial conditions that are equal to 1.  
  The boxed $n-h-2$ and the boxed $n+h$ are level, 
  and they have the same parity.  
  So, we can think of the placement of the succeeding part 
  as starting over after increasing each part by $2h+2$.  
  
  This is essentially how we construct the base partition 
  when the placed part is still between the right arm 
  and the right leg of the initial condition.  
  That is, when both the right arm and the right leg of the initial condition 
  contain parts that are equal top the newly placed part.  
  This part of the construction is uniform for 
  the presence and the absence of parts with a fixed relative height $h$.  
  In other words, we do not need to adjust the construction according to 
  how many parts there are with a given relative height $h$.  
  Please keep in mind that per the first table above, 
  increasing a part by one in the diagonal direction 
  will increase the relative height by one in this case.  
  
  When we are past the right leg of the initial condition, 
  and hence the absolute heights and the relative heights become equal, 
  the last part $n$ with relative height $h$ can tightly be followed 
  by a level $n+2h+2$ with relative height $h$, 
  or by an $n+2h+4$ with a relative height $h+1$, etc.  
  No smaller part than the level $n+2h+2$ 
  will both have relative height $h$ and respect the relative height of 
  the preceding $n$ with relative height $h$.  
  Ditto for the $n+2h+4$ with relative height $h+1$, etc.  
  We can bring the shifted imaginary initial conditions into the discussion, 
  but it is not necessary.  
  
  
  Like in the preceding case, 
  the discussion goes the same if we skip a certain relative height $h$ 
  or use an arbitrary number of parts with that relative height.  
  Unlike in the preceding case, 
  the next placed part must be increased by two 
  in order to increase the relative height by one.  
  
  The third case is when we are past the right arm of the initial condition, 
  and place an $n$ with relative height $h$ and absolute height necessarily $(\ell - 1)$.  
  In this case, the $n+2h+2$ with absolute height $(\ell - 1)$ 
  will have relative height $h$, 
  the $n+2h+4$ with absolute height $(\ell - 1)$ 
  will have relative height $h+1$, etc. 
  No smaller part than the said parts will both have the said respective relative heights 
  and respect the relative height of the previously placed $n$ at the same time.  
  In this case, as well, 
  the next placed part must be increased by two 
  in order to increase the relative height by one.  
  Like in the preceding cases, 
  the discussion goes the same if we skip a certain relative height $h$ 
  or use an arbitrary number of parts with that relative height.  
  
  Finally, we calculate the weights of the base partitions.  
  Suppose we choose and fix $n_1$, $n_2$, \ldots, $n_\ell$ $\geq 0$, 
  and there are exactly $n_{h+1}$ parts with relative height $h$ 
  for $h = $ 0, 1, \ldots, $(\ell - 1)$.  
  Bringing together the above, 
  the base partition with the greatest weight occurs when $k_0 = 1$, 
  and the other $k_j$'s are zero.  
  This is because the the parts with relative height zero 
  start with 2 in this case.  
  In all other cases, they start with 1.  
  Also, in the aforementioned case, 
  we skip two integers when incrementing the relative height by one.  
  In all other cases, we skip only one integer 
  until the relative height becomes equal to the absolute height, 
  or until the absolute height becomes $(\ell - 1)$.  
  In each case, the difference between two parts 
  with the same relative height $h$ is $2h+2$.  
  
  Thus, for $k_0=1$, the weight of the base partition is as follows.  
  \begin{align*}
    \underbrace{\left[ 2 + 4 + \cdots + 2 n_1 \right] }_{ 
      \textrm{ parts with relative height } 0 }
    + \underbrace{\left[ (2n_1 + 4) + (2n_1 + 8) + \cdots + ( 2n_1 + 4n_2 ) \right] }_{
      \textrm{ parts with relative height } 1 }
  \end{align*}
  \begin{align*}
    + \cdots 
    + \underbrace{\left[ (2n_1 + 4n_2 + \cdots + 2(s-1)n_{s-1} + 2s) 
        + (2n_1 + 4n_2 + \cdots + 2(s-1)n_{s-1} + 4s ) \right.}_{
      \textrm{ parts with relative height } s-1 }
  \end{align*}
  \begin{align*}
    \underbrace{\left. + \cdots 
        + (2n_1 + 4n_2 + \cdots + 2(s-1)n_{s-1} + 2sn_s )\right]}_{
      \textrm{ still parts with relative height } s-1 }
  \end{align*}
  \begin{align*}
    + \cdots 
    + \underbrace{\left[ (2n_1 + 4n_2 + \cdots + 2(\ell-1)n_{\ell-1} + 2\ell) 
        + (2n_1 + 4n_2 + \cdots + 2(\ell-1)n_{\ell-1} + 4\ell ) \right.}_{
      \textrm{ parts with relative height } \ell-1 }
  \end{align*}
  \begin{align*}
    \underbrace{\left. + \cdots 
        + (2n_1 + 4n_2 + \cdots + 2(\ell-1)n_{\ell-1} + 2\ell n_\ell )\right]}_{
      \textrm{ still parts with relative height } \ell-1 }
  \end{align*}
  \begin{align*}
    = \left( n_1^2 + n_1 \right) 
    + \left( 2 n_1 n_2 + 2 n_2^2 + 2 n_2 \right) 
    + \left( 2 n_1 n_3 + 4 n_2 n_3 + 4 n_3^2 + 4 n_3 \right) 
  \end{align*}
  \begin{align*}
    + \cdots 
    + \left( 2 n_1 n_s + 4 n_2 n_s + \cdots 
      + 2 (s-1) n_{s-1} n_s + s n_s^2 + s n_s \right)
  \end{align*}
  \begin{align*}
    + \cdots 
    + \left( 2 n_1 n_\ell + 4 n_2 n_\ell + \cdots 
      + 2 (\ell-1) n_{\ell-1} n_\ell + \ell n_\ell^2 + \ell n_\ell \right)
  \end{align*}
  \begin{align*}
    = \left( n_1 + n_2 + \cdots + n_\ell \right)^2 
    + \left( n_2 + n_3 + \cdots + n_\ell \right)^2 
    + \cdots 
    + \left( n_{\ell-1} + n_\ell \right)^2
    + \left( n_\ell \right)^2 
  \end{align*}
  \begin{align*}
    + \left( n_1 + n_2 + \cdots + n_\ell \right)
    + \left( n_2 + n_3 + \cdots + n_\ell \right) 
    + \cdots 
    + \left( n_{\ell-1} + n_\ell \right)
    + \left( n_\ell \right)
  \end{align*}
  \begin{align}
  \label{eq_weightOfBasePtn_for_k0_1}
    = N_1^2 + N_2^2 + \cdots + N_\ell^2 + N_1 + N_2 + \cdots + N_\ell.  
  \end{align}
  
  When $k_\ell = 1$, the smallest part is 1.  
  One of the edge cases, 
  namely the relative heights becoming equal to $\ell-1$, 
  is reached after relative height zero, 
  so the difference between the largest part with relative height $h$
  and the smallest part with relative height $h+1$ is equal to $2h+2$ 
  for all $h$.  
  This amounts to reducing the weights 
  of all parts in \eqref{eq_weightOfBasePtn_for_k0_1} by 1, 
  i.e. reducing it by $N_1$.  
  Thus, when $k_\ell = 1$, the weight of the base partition is 
  \begin{align*}
    = N_1^2 + N_2^2 + \cdots + N_\ell^2 + N_2 + N_3 + \cdots + N_\ell.  
  \end{align*}
  
  When $k_1 = 1$, 
  on top of the smallest part being 1, 
  the edge case in which the absolute heights 
  becoming equal to the absolute heights 
  is reached after relative height one.  
  With similar considerations to the above paragraph, 
  the weight of the base partition when $k_1 = 1$ is
  \begin{align*}
    = N_1^2 + N_2^2 + \cdots + N_\ell^2 + N_3 + N_4 + \cdots + N_\ell.  
  \end{align*}
  
  This pattern continues and finishes the proof.  
\end{proof}

The pattern observed at the end of the proof of Theorem \ref{thmRussellMain} 
is one of the reasons why Russell chose to line up the initial conditions 
as in~\cite{Russell_starting_point}.  
If we followed~\cite{Russell_starting_point}, 
the tables at the beginning 
of the proof of Theorem \ref{thmRussellMain} would be more complicated.  

\section{Proofs of Results}
\label{secProofs}

\begin{proof}[Proof of Theorem \ref{thmRussellMain}]
  Let $n_1$, $n_2$, \ldots, $n_\ell$ be arbitrary but fixed non-negative integers.  
  
  We first construct a CMPP partition with $k=1$ and $k_i=1$ 
  from a base partition and a vector partition.  
  Construct the unique base partition 
  with exactly $n_s$ parts with relative height $s-1$ 
  for $s = $ 1, 2, \ldots, $\ell$ as in Proposition \ref{propBasePtns}.  
  This base partition has weight \eqref{eqBasePtnWeight}.  
  This is recorded by the exponent of $q$ 
  in the numerator of the general term in  the right hand side of \eqref{eqRussellMain}.  
  
  Take a vector partition
  \begin{align}
  \nonumber
    & \left( 
      (\lambda_{1 \; 1}, \lambda_{1 \; 2}, \ldots, \lambda_{1 \; n_1} ), 
      (\lambda_{2 \; 1}, \lambda_{2 \; 2}, \ldots, \lambda_{2 \; n_2} ), 
      \cdots 
      (\lambda_{s \; 1}, \lambda_{s \; 2}, \ldots, \lambda_{s \; n_s} ), \right. \\ 
  \label{vectorPtn}
      & \qquad \left. \cdots 
      (\lambda_{\ell \; 1}, \lambda_{\ell \; 2}, \ldots, \lambda_{\ell \; n_\ell} ) 
    \right)
  \end{align}
  such that each component has non-decreasing parts that are nonnegative.  
  To be precise, we allow zeros to appear in partitions to keep the lengths fixed.  
  This vector partition is generated by the $q$-factorials 
  in the denominator of the general term in  the right hand side of \eqref{eqRussellMain}.  
  
  For $s=$ $\ell$, $\ell-1$, \ldots, 1, in this order, 
  we move the largest part with relative height $s-1$ $\lambda_{s \; 1}$ times, 
  the second largest part with relative height $s-1$ $\lambda_{s \; 2}$ times, etc. 
  These moves are possible thanks to Proposition \ref{propPossibleMoves}, 
  and uniquely reversible by a remark in the proof of it.  
  
  Now we construct a vector partition and a base partition from a CMPP partition.  
  Given a CMPP partition $\doublestroke{\lambda}$, 
  find $n_s$, the number of parts with relative height $s-1$ 
  for $s =$ 1, 2, \ldots, $\ell$.  
  Then, for $s = $1, 2, \ldots, $\ell$, in this order, 
  move the smallest part with relative height $s-1$ 
  as far back as possible, and record the number of moves as $\lambda_{s \; n_s}$.  
  This will allow at least $\lambda_{s \; n_s}$ moves on the second smallest part 
  with relative height $s-1$.  
  Move that second smallest part with relative height $s-1$ as far back as possible, 
  and record the number of moves as $\lambda_{s \; n_s-1}$, etc.  
  We have observed that $\lambda_{s \; n_s}$ $\geq \lambda_{s \; n_s-1}$ $\geq \cdots$ 
  $\geq \lambda_{s \; 1}$.  
  This must produce the unique base partition with $n_s$ parts with relative height $s-1$ 
  for $s = $1, 2, \ldots, $\ell$.  
  Otherwise, at least one more backward move is possible on a part, 
  contrary to the procedure we just outlined.  
  We also obtain a vector partition in the form of \eqref{vectorPtn}.  
  
  A straightforward induction can be used to show that both maps are injective, 
  and we conclude the proof.  
  
\end{proof}

\subsection{A worked example}
\label{subsecWorkedExample}

We now take a CMPP partition with $\ell = 3$, $k = 1$, 
$k_0 =$ $k_1 =$ $k_3 =$ 0, $k_2 = 1$,
and perform the necessary backward moves 
to find the corresponding base partition and the auxiliary partitions.  
\begin{align*}
  \doublestroke{\lambda} = 
  3_0 + 7_1 + 14_2 + 23_2 + 26_1 + 34_1
\end{align*}
The subscripts indicate the absolute heights.  
We run the algorithm in Subsection \ref{subsecRelHghtDeterm} 
to determine the relative heights.  
\begin{align*}
  \doublestroke{\lambda} = 
  3_{(0)} + 7_{(1)} + 14_{(2)} + 23_{(2)} + 26_{(0)} + 34_{(1)}
\end{align*}
We then draw the upper envelope of the legs of parts and the initial condition.  
Orange legs correspond to relative height zero, 
green legs to one, 
and the blue legs to two.  
The right leg of the initial condition is gray.  
The part $3_0$ has degenerate legs, 
but we know that it necessarily has relative height zero.  
\begin{center}
  \includegraphics[scale=0.13]{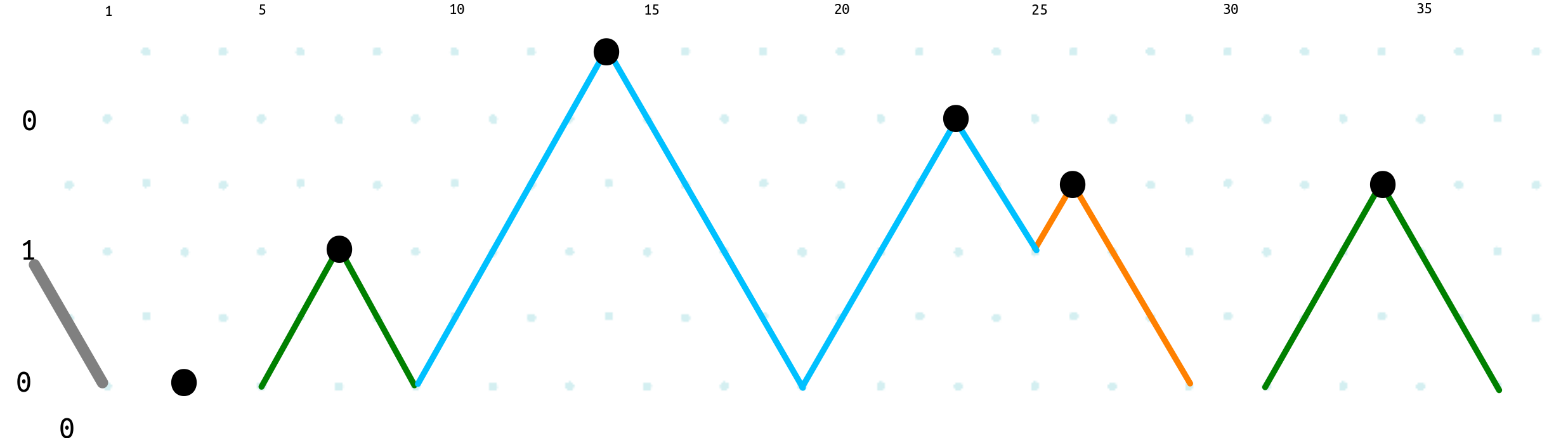}
\end{center}
We have two parts for each relative height 0, 1 and 2, 
so $n_1 = n_2 = n_3 = 2$.  
We work on parts with relative height zero, 
starting with the smallest.  
We move it as far back as possible, and record the number of moves.  
The smallest part with relative height zero moves backward 2 times, 
and the next smallest 17 times, 
thus $(\lambda_{1 \; 1}, \lambda_{1 \; 2}) = $ $(2, 17)$.  
The second smallest part switches relative heights 
with parts having greater relative heights three times.  

Next, we move the parts with relative height one backward, 
the smaller one first.  
The smaller part with relative height one moves backward 3 times.  
We explicitly show the backward moves on the larger part with relative height one.  
The part being moved is shown in red.  
\begin{center}
  \includegraphics[scale=0.13]{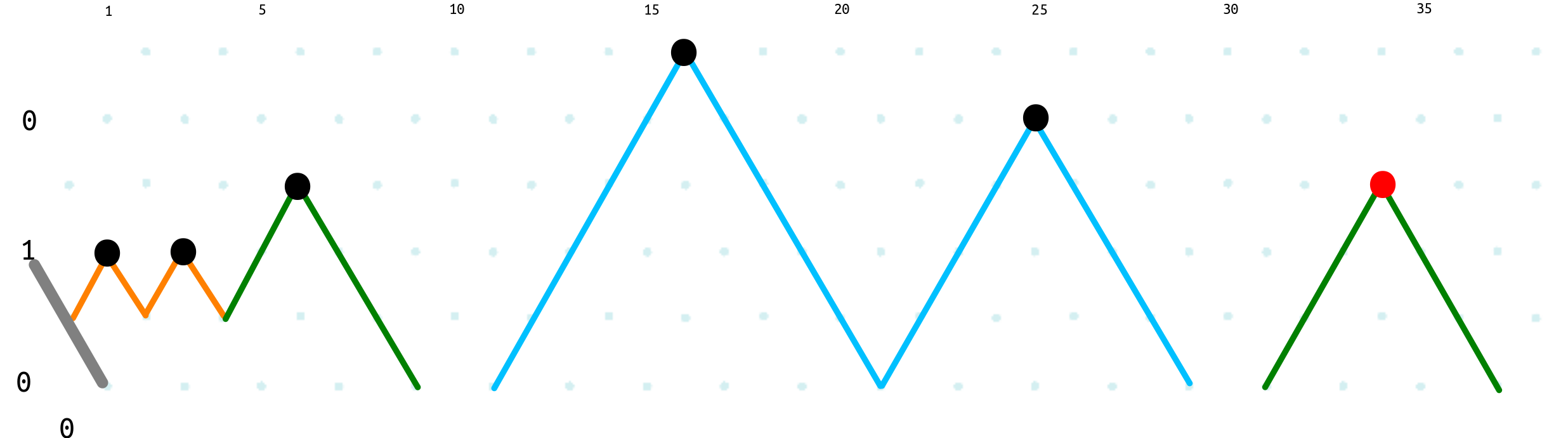}
  \includegraphics[scale=0.13]{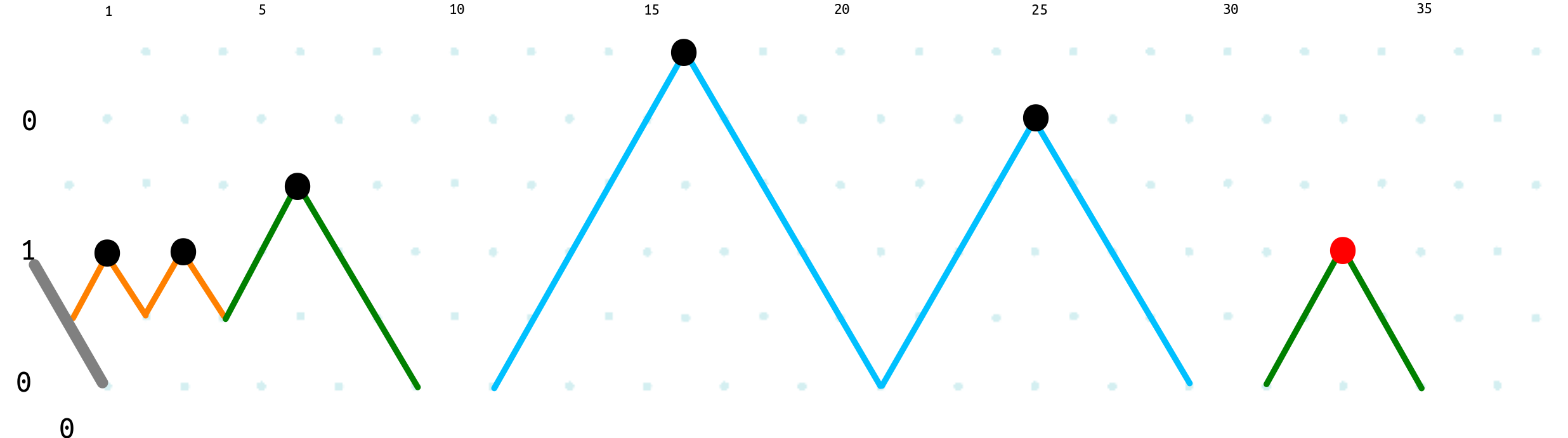}
  \includegraphics[scale=0.13]{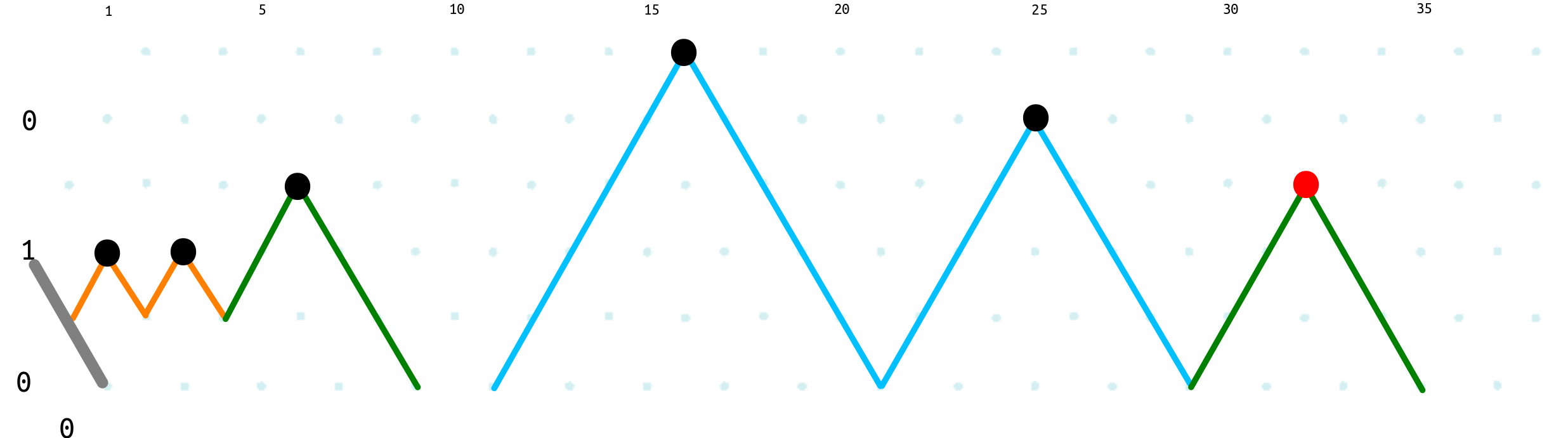}
  \includegraphics[scale=0.13]{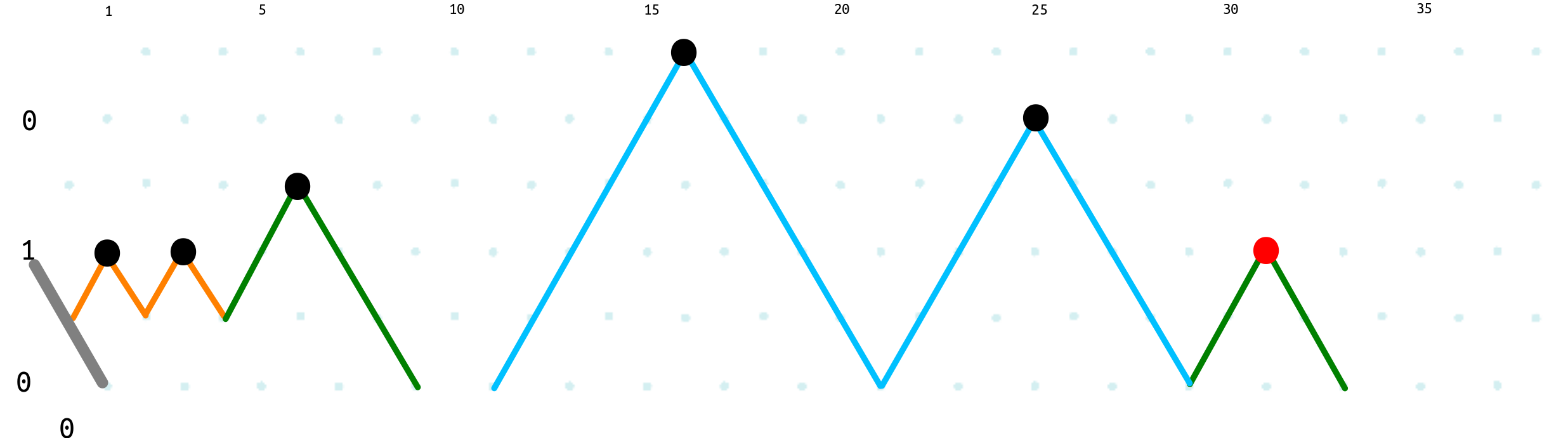}
  \includegraphics[scale=0.13]{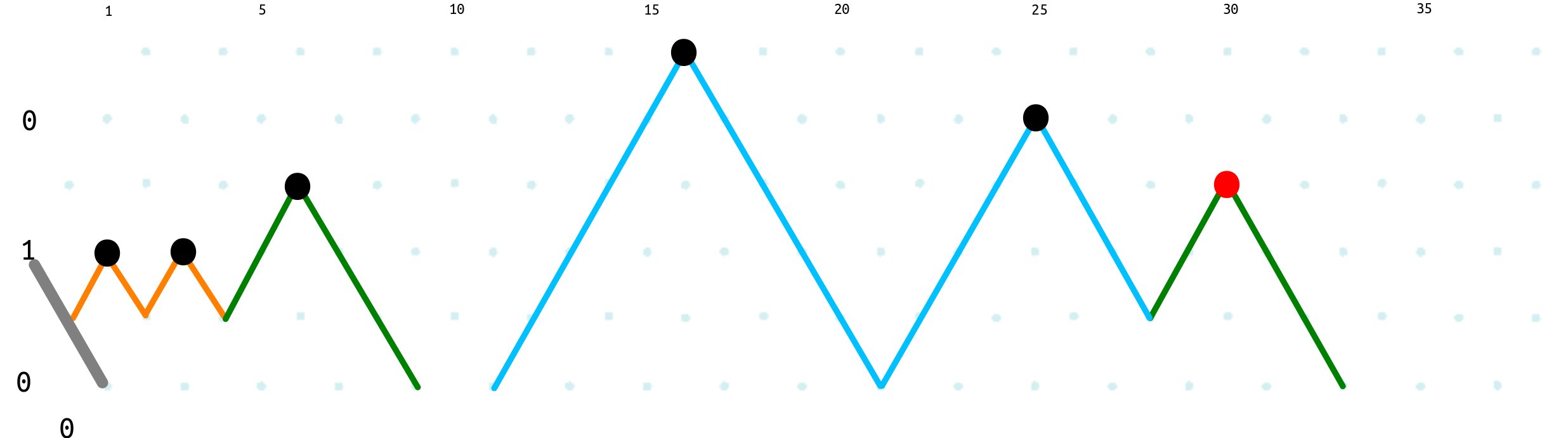}
  \includegraphics[scale=0.13]{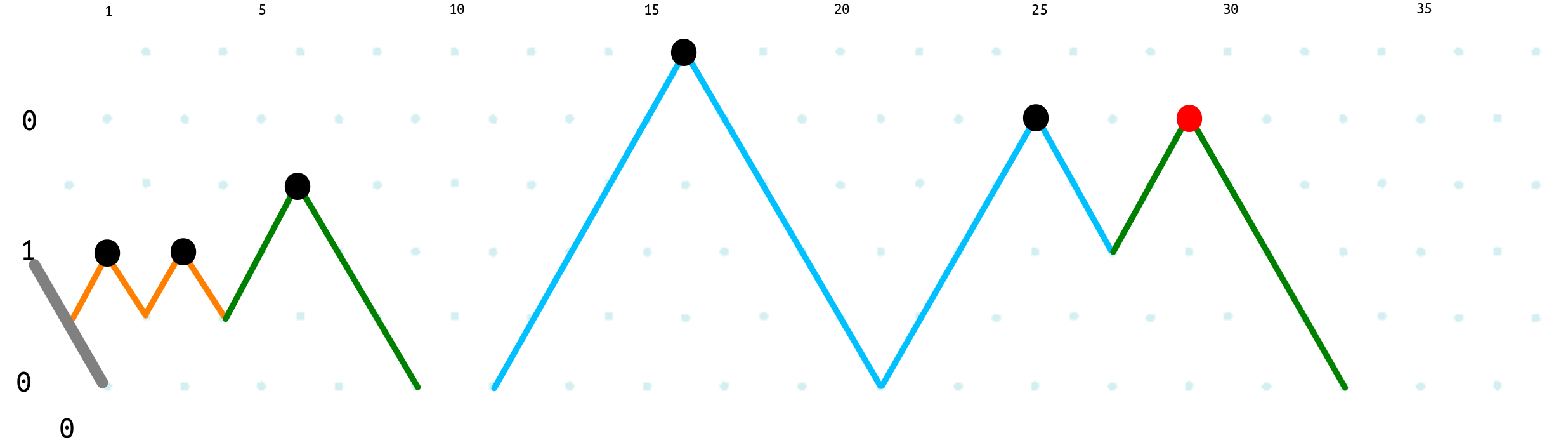}
  \includegraphics[scale=0.13]{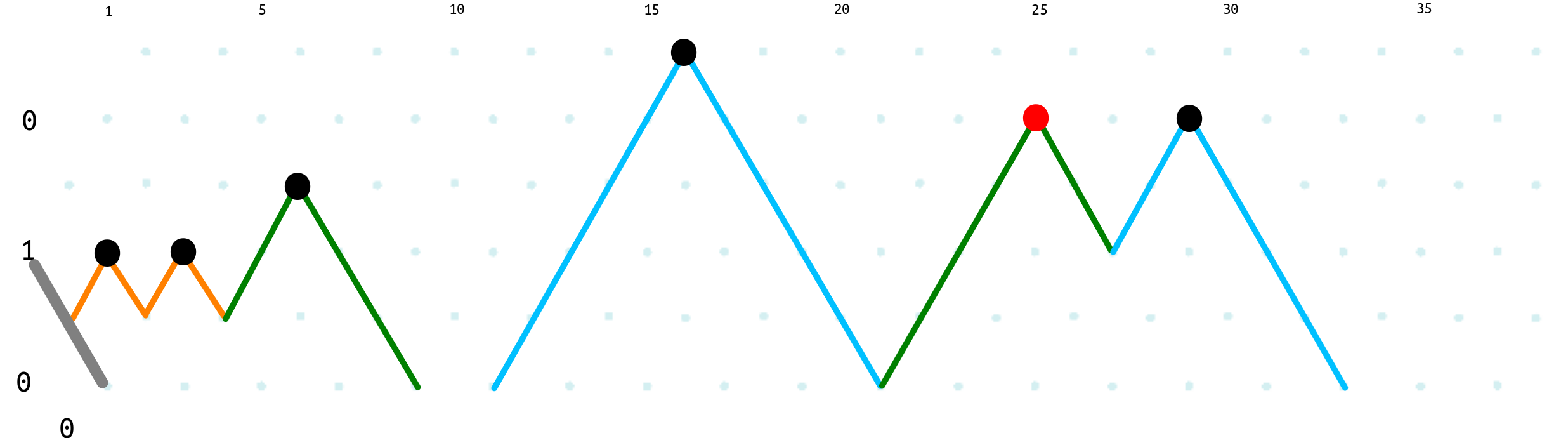}
  \includegraphics[scale=0.13]{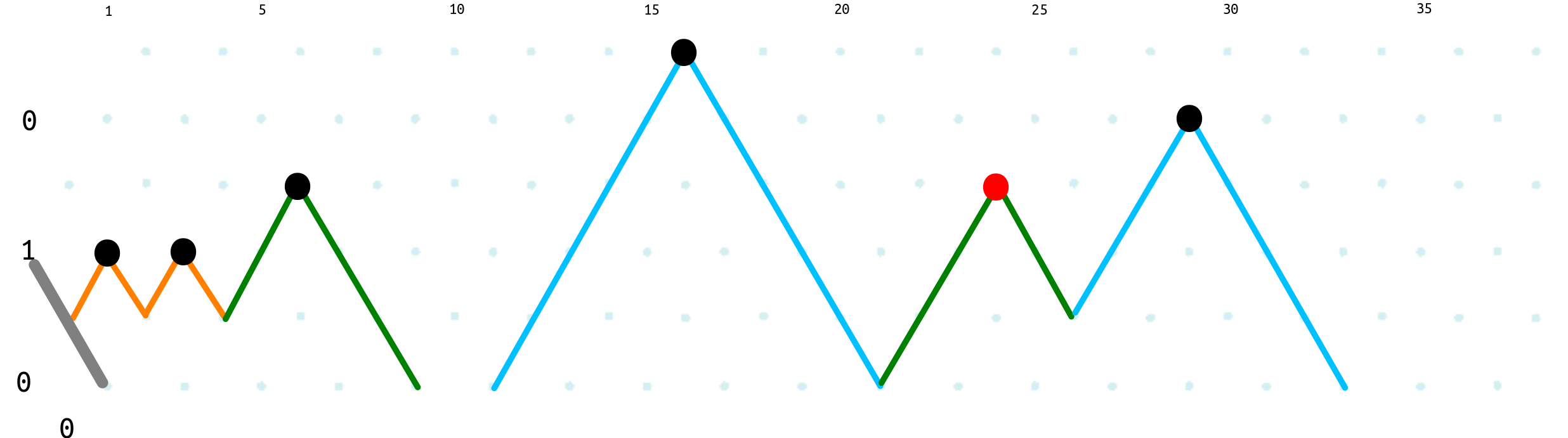}
  \includegraphics[scale=0.13]{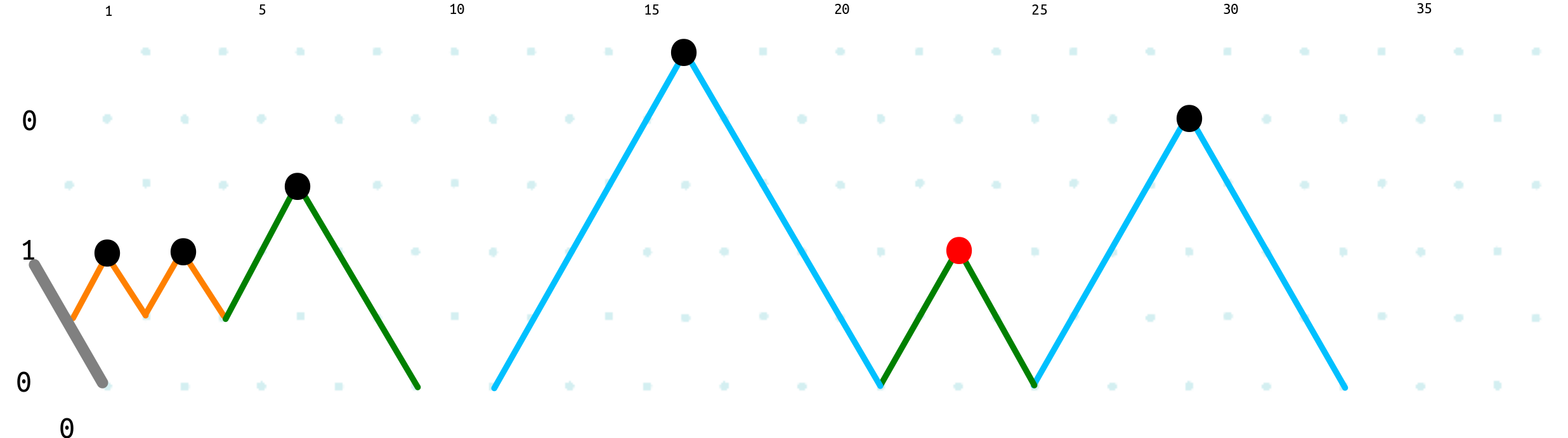}
  \includegraphics[scale=0.13]{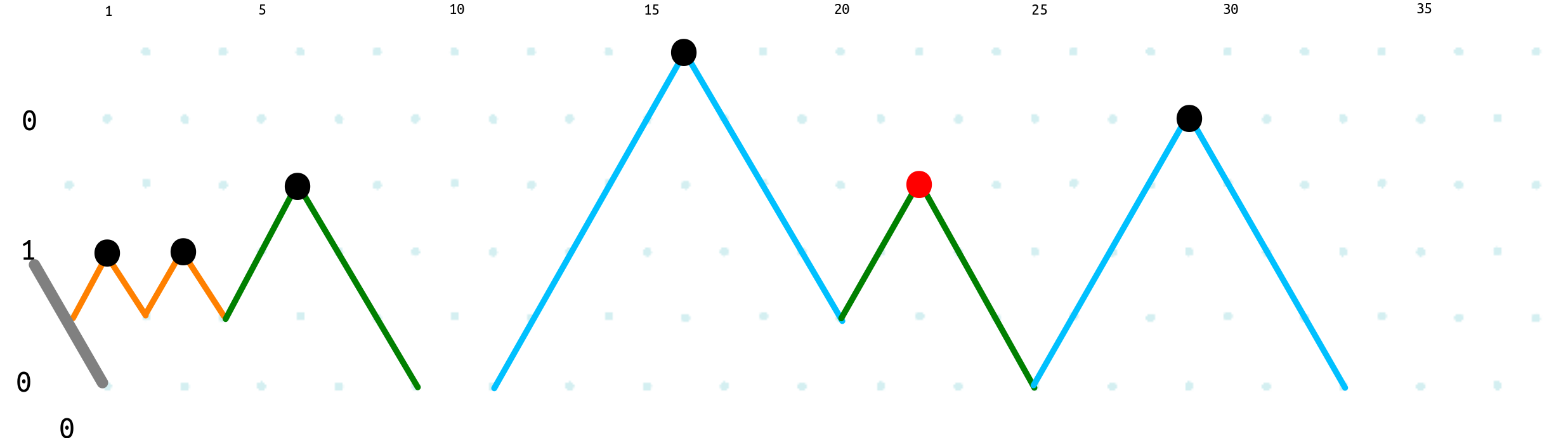}
  \includegraphics[scale=0.13]{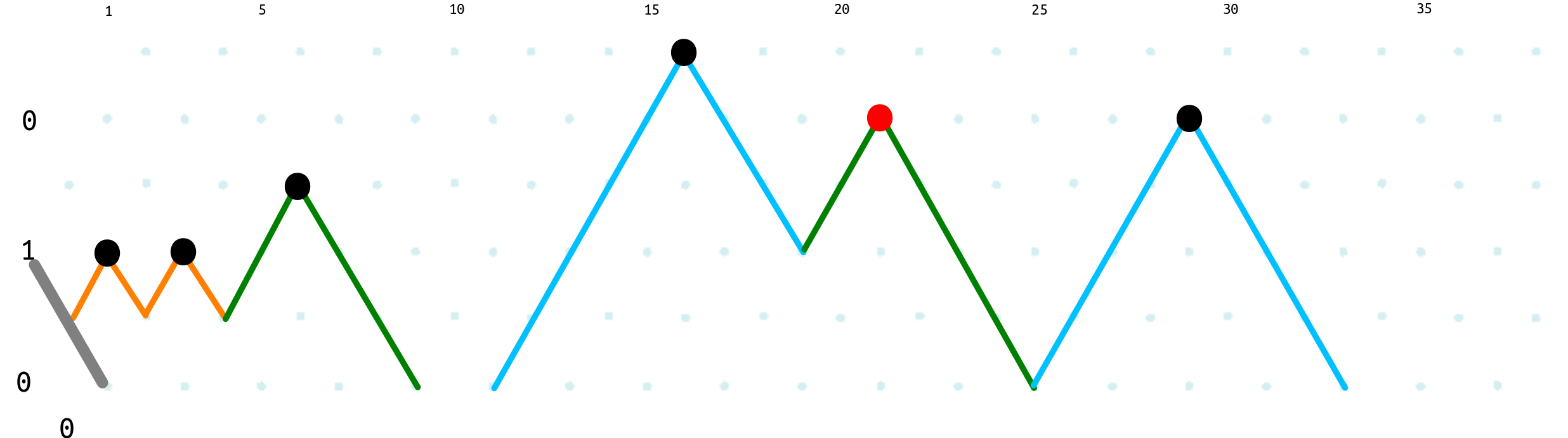}
  \includegraphics[scale=0.13]{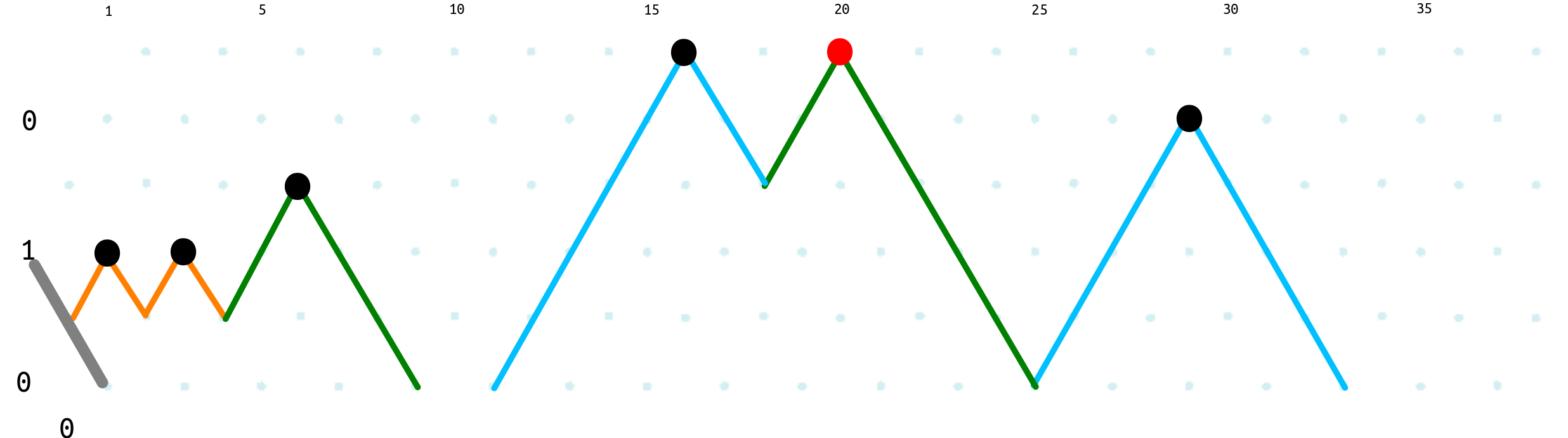}
  \includegraphics[scale=0.13]{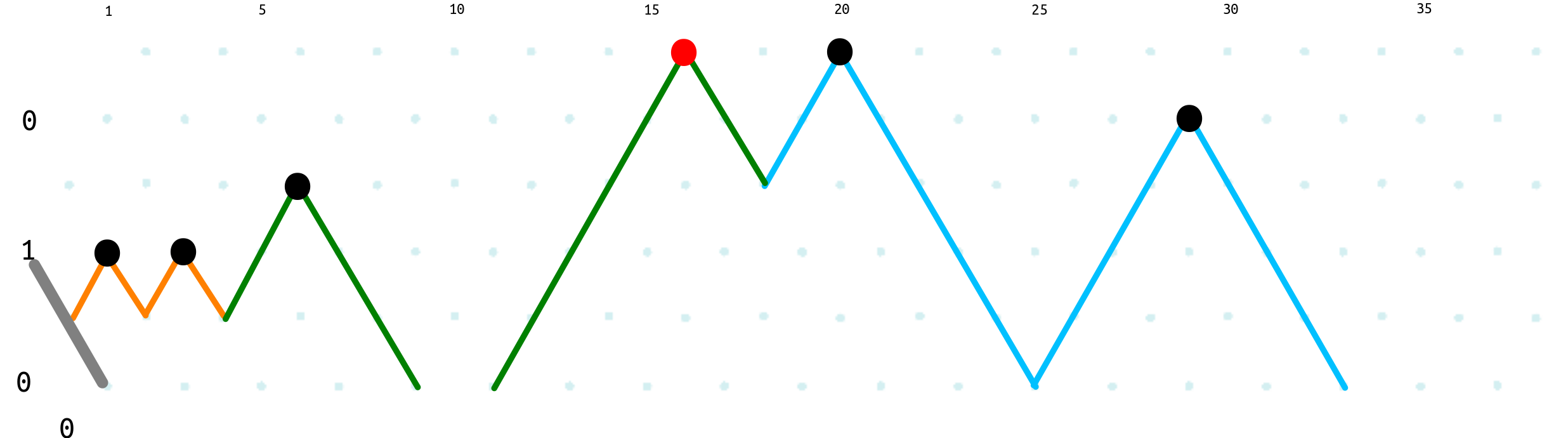}
  \includegraphics[scale=0.13]{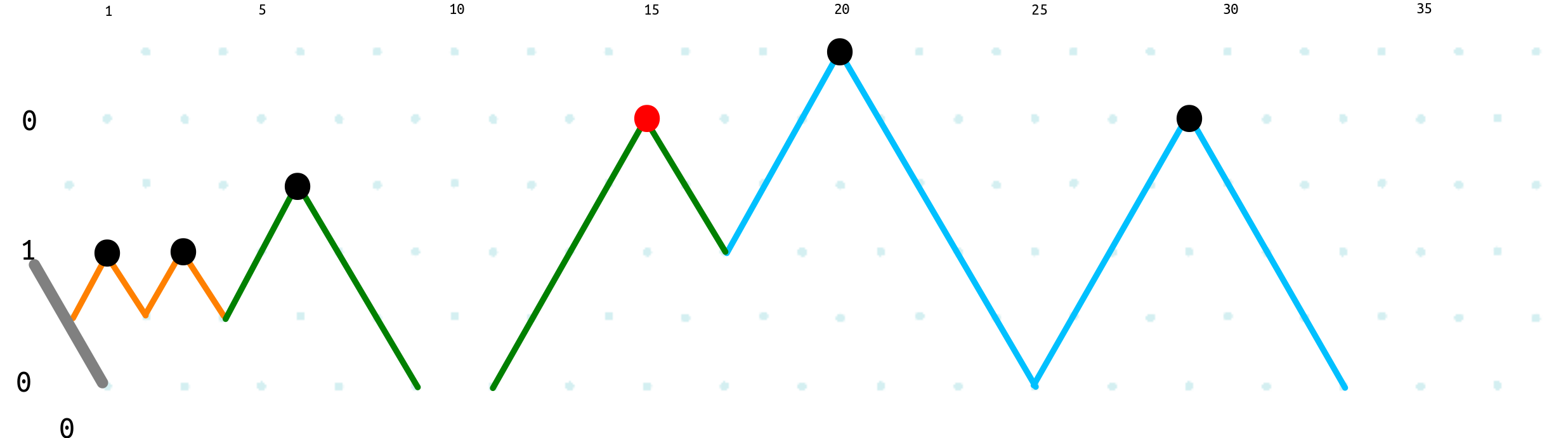}
  \includegraphics[scale=0.13]{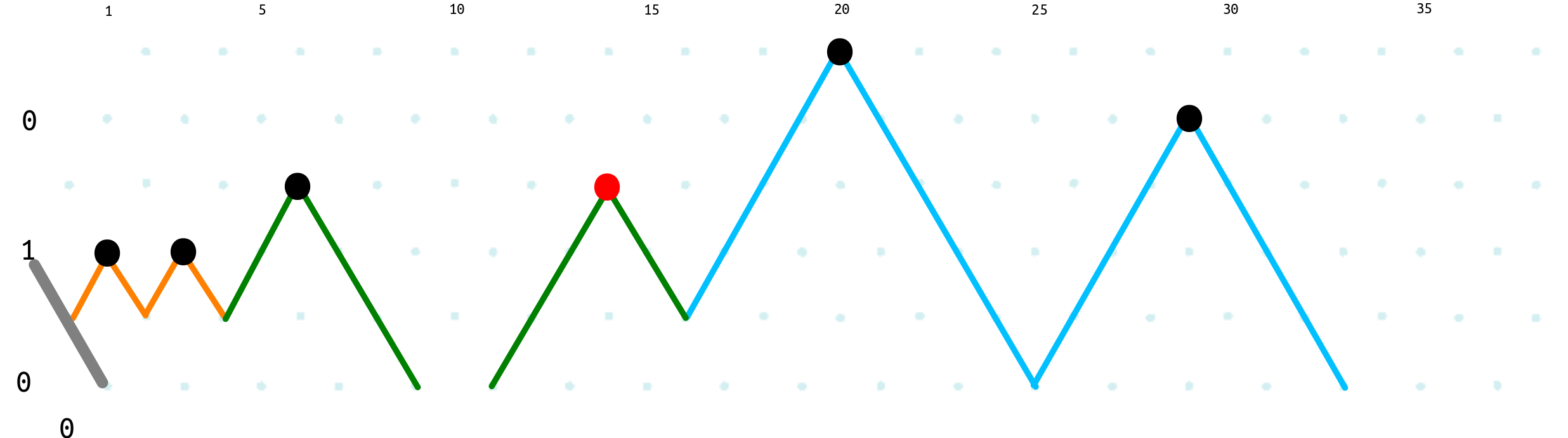}
  \includegraphics[scale=0.13]{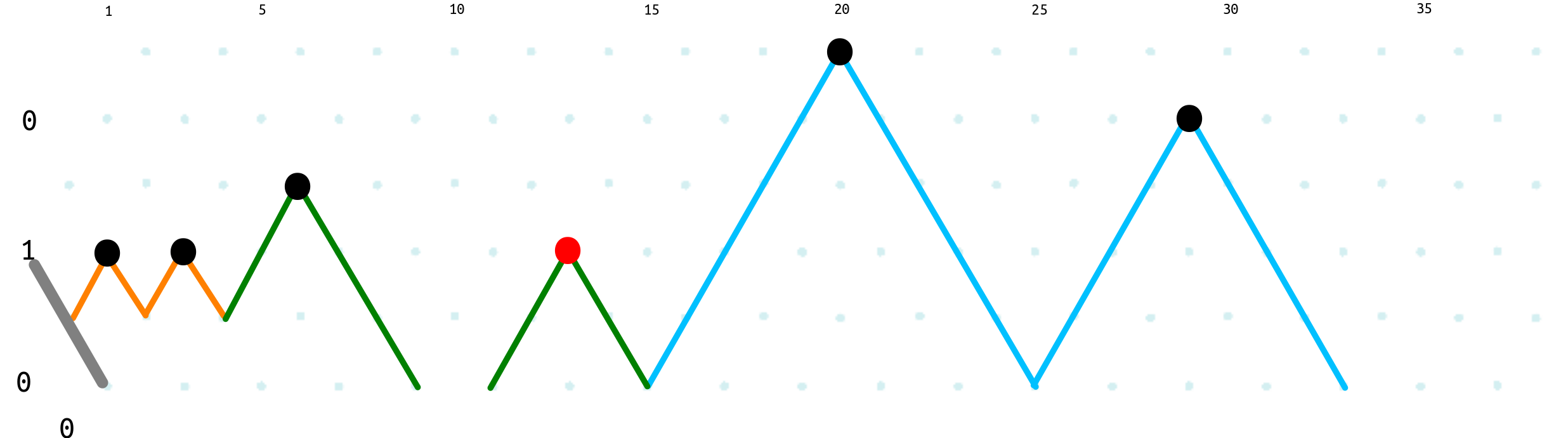}
  \includegraphics[scale=0.13]{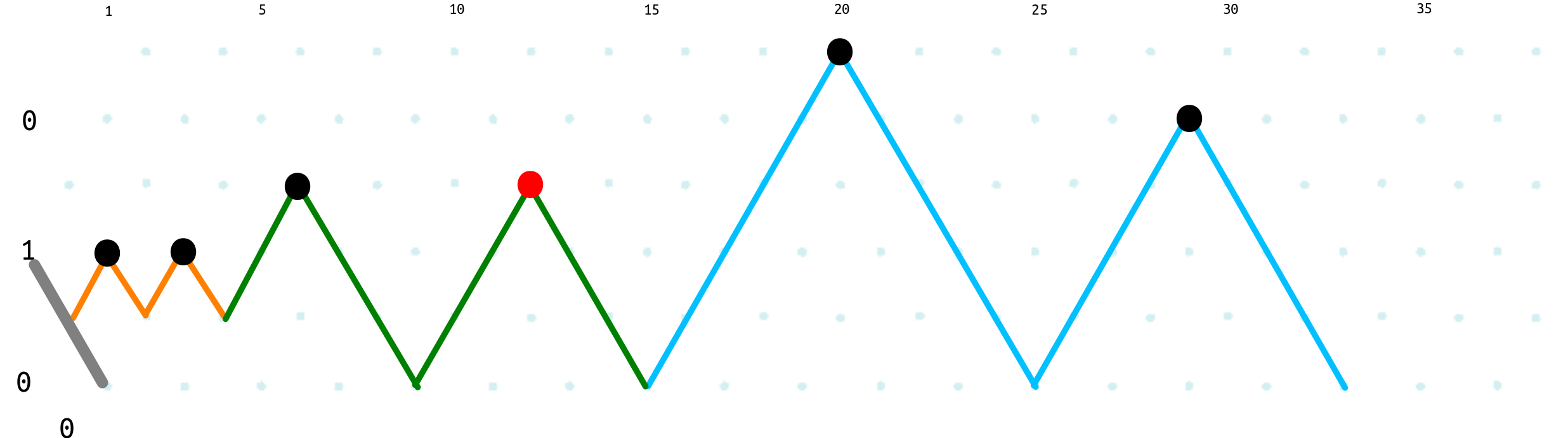}
  \includegraphics[scale=0.13]{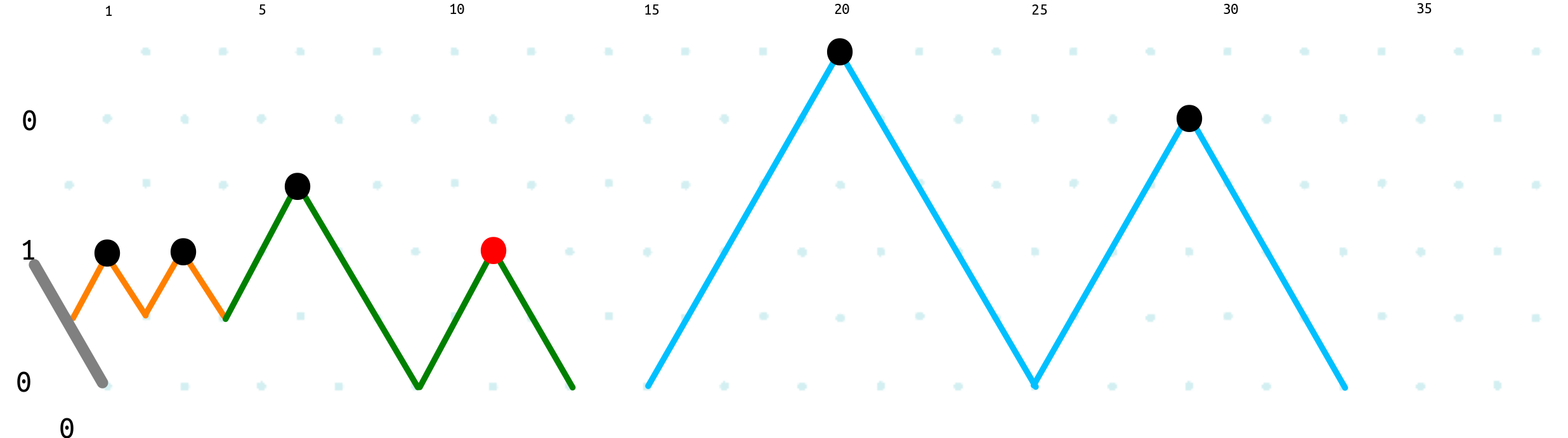}
  \includegraphics[scale=0.13]{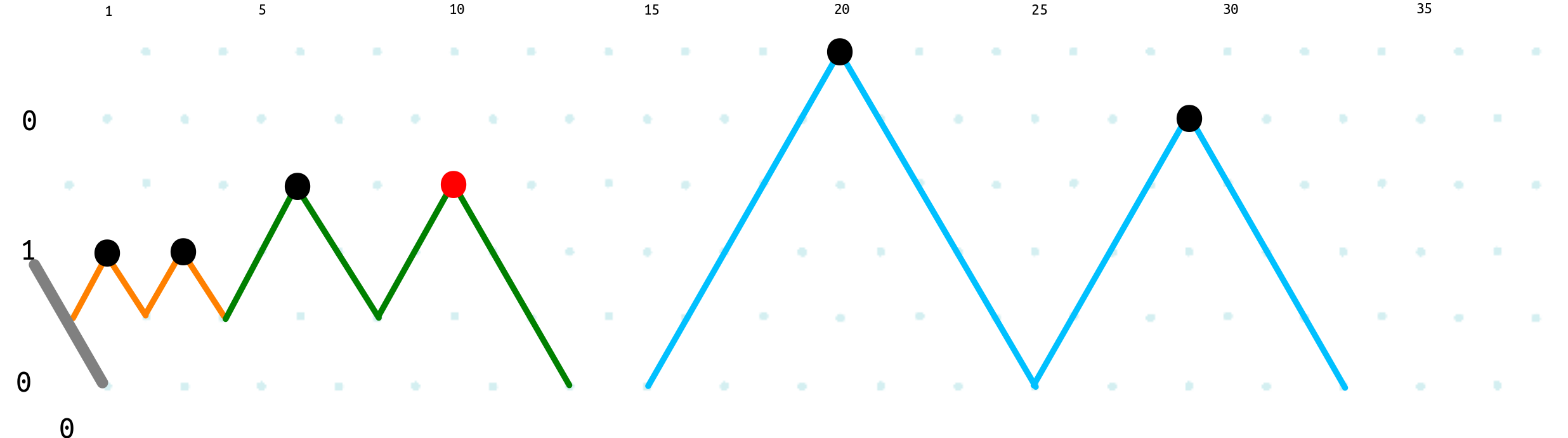}
\end{center}
The larger part with relative height one moved backward 16 times.  
It switched heights with parts with larger relative heights twice.  
We have $(\lambda_{2 \; 1}, \lambda_{2 \; 2}) = $ $(3, 16)$.  

Finally, we deal with the parts with relative height two.  
We only show the last configuration.  
\begin{center}
  \includegraphics[scale=0.13]{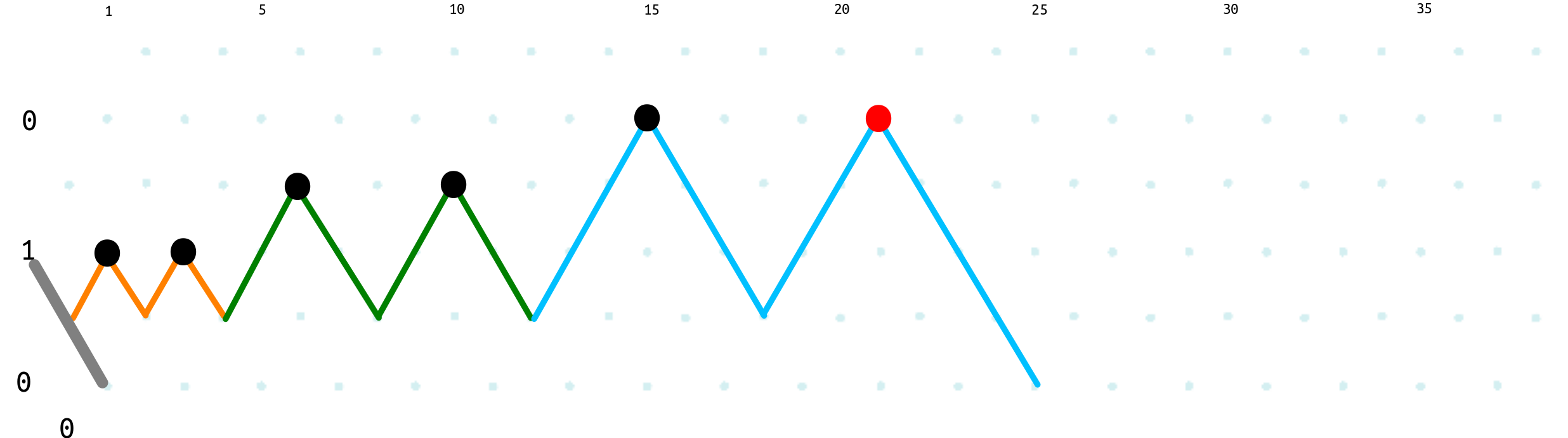}
\end{center}
Similar to the above reckoning, 
We have $(\lambda_{3 \; 1}, \lambda_{3 \; 2}) = $ $(5, 8)$.  

Please observe that it is not possible to move 
any of the parts backward without altering the relative heights, 
or without ending up with an inadmissible CMPP partition.  
Therefore, the last configuration 
\begin{align*}
  \doublestroke{\beta} 
  = 1_{(0)} + 3_{(0)} + 6_{(1)} + 10_{(1)} + 15_{(2)} + 21_{(2)} 
\end{align*}
is a base partition.  
The weight of this base partition $\doublestroke{\beta}$ 
is 
\begin{align*}
  N_1^2 + N_2^2 + N_3^2 
  = (2+2+2)^2 + (2+2)^2 + 2^2 
  = 56
\end{align*}
as asserted in Proposition \ref{propBasePtns}.  

Now we present the proofs of Theorems \ref{thmBSLT1} and \ref{thmBSLT2}.  

\begin{proof}[Proof of Theorem \ref{thmBSLT1}]
  We will explain the deviations from the construction above.  
  The definitions of absolute and relative heights carry over.  
  There are two main differences from the previous construction. 
  \begin{itemize}
  \item The parts with absolute height $\ell-1$ 
    or with relative height $\ell-1$ can only be even.  
  \item The parts with relative height $\ell-1$ can 
    only move forward by $+2$, and backward by $-2$.  
  \end{itemize}
  We first show that forward or backward moves of parts with relative heights 
  less than $\ell-1$ will not be effected.  
  
  Consider a part $n$ with relative height $h < \ell-1$.  
  If the relative height of the part $n$ 
  is determined by the right leg of a predecessor, 
  then the part will become the $n+1$ below the moving $n$ 
  (see the penultimate figure above Proposition \ref{propPossibleMoves}).  
  So, no parts moving forward have to become a deleted odd part 
  in the top row due to their predecessors.  
  
  If the relative height of $n$ is determined by the left leg of a successor, 
  then $n$ may only go as high as level with its immediate successor.  
  Then, it either stops of switches relative heights with its immediate successor.  
  At that point, the relative height of the moving part 
  will be determined by the right leg of a predecessor.  
  By induction, no part will have to become an odd part 
  in the deleted top row due to a successor.  
  
  By similar arguments, the absence of any parts 
  in the deleted top row of odd parts will not prevent 
  any possible forward moves, so Proposition \ref{propPossibleMoves} 
  can be easily adjusted.  
  
  The construction of the base partition must be outlined, as well.  
  One needs some more bookkeeping in the proof of Proposition \ref{propBasePtns}.  
  Observe that in that proof, 
  the absolute heights and the relative heights are non-decreasing.  
  In particular, when the absolute height reaches $\ell-1$, 
  it stays $\ell-1$.  
  Moreover, once the relative heights hit $\ell-1$, 
  then the parity of the placed parts stay the same.  
  Thus, we start by determining the relative height (r.h.) 
  of the smallest part with absolute height (a.h.) $\ell-1$ 
  and its parity for various non-zero initial conditions.  
  \begin{center}
    \begin{tabular}{ccc}
      The non-zero 
      & r.h. of the smallest part
      & no. of parts \\
      initial condition &  
      with a.h. $=\ell-1$ &  on the deleted top row \\\hline 
      $k_0$ & $\ell-1$ & 0 \\ 
      $k_\ell$ & $0$ & $N_1$ \\
      $k_1$ & $\ell-1$ & 0 \\ 
      $k_{\ell-1}$ & $1$ & $N_2$ \\
      & $\vdots$ & 
    \end{tabular}
  \end{center}
  When we need an odd part in the top row in a base partition, 
  we know we cannot move it backward any more 
  without messing up the count of parts with fixed relative heights 
  (i.e. the vector $(n_1, n_2, \ldots, n_\ell)$).  
  In other words, we cannot make those inadmissible odd parts 
  admissible even parts one less each.  
  However, we can make them admissible even parts one \emph{more} each.  
  We shall know that they cannot move backward any further.  
  So, we have the base partition we are after.  
  This amounts to adding 1 to parts with a certain relative height $h$ or more, 
  i.e. adding $N_h$ to the weight of the base partition.  
  
  Let us also recall that parts with relative height $\ell-1$ 
  necessarily move two by two, and the others one by one.  
  This gives us the 
  \begin{align*}
    (q; q)_{n_1} (q; q)_{n_2} \cdots (q; q)_{n_{\ell-1}} (q^2; q^2)_{n_\ell} 
  \end{align*}
  in the denominator.    
\end{proof}

Before we give the proof of Theorem \ref{thmBSLT2}, albeit brief, 
we need to jot down a small result.  
Call the number of $[k_0, k_1, \ldots, k_\ell]-$admissible CMPP partitions 
on diagram \eqref{CMPPdiagAlt2Prep}
of $n$ with $j$ parts $\widetilde{F}(i,j,n)$.  
We set 
\begin{align*}
  \widetilde{P}_i(z) := \widetilde{P}_i(z; q) 
  := \sum_{ j,n \geq 0 } \widetilde{F}(i,j,n) \; q^n \; z^j.  
\end{align*}
Then, 
\begin{align*}
  \widetilde{P}_i(z) = P_i(z), 
\end{align*}
where $P_i(z)$ is given in Theorem \ref{thmRussellMain}.  
One can naturally think of this as a reflection of CMPP partitions 
on a horizontal mirror.  
But, we need to emphasize that the combinatorics is a little different 
since the absolute heights and the relative heights 
are determined by the parts directly below the designated part.  

\begin{proof}[Proof of Theorem \ref{thmBSLT2}]
  This is the proof of Theorem \ref{thmBSLT1} \emph{mutatis mutandis}.  
  The adjusted table is as follows.  
  \begin{center}
    \begin{tabular}{ccc}
      The non-zero 
      & r.h. of the smallest part
      & no. of parts \\
      initial condition &  
      with a.h. $=\ell-1$ &  on the deleted top row \\\hline 
      $k_0$ & $0$ & $N_1$ \\ 
      $k_\ell$ & $\ell-1$ & $0$ \\
      $k_1$ & $1$ & $N_2$ \\ 
      $k_{\ell-1}$ & $\ell-1$ & $0$ \\
      & $\vdots$ & 
    \end{tabular}
  \end{center}
\end{proof}

\section{Commentary and Future Work} 
\label{secFutureWork}

It is routine to bound the part sizes, 
and write polynomials as generating functions.  
Let $P_i(z; M)$ denote the 
generating functions of the CMPP partitions generated by $P_i(z)$ 
in which parts are at most $M$.  Then, for example, 
\begin{align*}
  P_0(z; M) = & \sum_{n_1, n_2, \ldots, n_\ell \geq  0} 
    q^{ N_1^2 + N_2^2 + \cdots + N_\ell^2 + N_1 + N_2 + \cdots + N_\ell } 
    \; z^{N_1} 
    \begin{bmatrix} 
      M - 2n_1 - 2( n_2 + n_3 + \cdots + n_\ell ) 
      \\ n_1 \end{bmatrix} \\ 
    & \quad \times \begin{bmatrix} 
      M - (2n_1 + 4n_2) - 2( n_3 + n_4 + \cdots + n_\ell ) 
      \\ n_2 \end{bmatrix} \\ 
    & \quad \times \cdots \begin{bmatrix} 
      M - (2n_1 + 4n_2 + 6n_3 + \cdots + 2sn_s) - 2( n_{s+1} + n_{s+2} + \cdots + n_\ell ) 
      \\ n_s \end{bmatrix} \\ 
    & \times \cdots \begin{bmatrix} 
      M - (2n_1 + 4n_2 + 6n_3 + \cdots + 2 \ell n_\ell)  
      \\ n_\ell \end{bmatrix}, 
\end{align*}
where the notation is that of Theorem \ref{thmRussellMain}, 
and $\begin{bmatrix} a \\ b \end{bmatrix}$ is the Gaussian polynomial 
$\frac{ (q; q)_a }{ (q; q)_b (q; q)_{a-b} }$~\cite{A_thebluebook}.  
The examplar we happened to choose above 
is a specialization of Warnaar's restricted two-line cylindric partitions 
generating function~\cite{Warnaar23}.  
Similar polynomials due to Warnaar~\cite{Warnaar23} imply Theorem \ref{thmRussellMain} 
upon the connection between two-line cylindric partitions and 
CMPP partitions for $k=1$.  
Please see ~\cite{Russell_starting_point} and two paragraphs below.  

The most obvious follow up problem 
is to find a similar construction for obtaining 
evidently positive generating functions for the cases $k > 1$.  
Evidently positive series for the $[k, 0, \ldots, 0]-$ 
and $[0, \ldots, 0, k]-$admissible CMPP partitions~\cite{CMPP_original} 
are known~\cite{GOW}.  
The construction in this paper does not readily generalize.  
It looks like even the determination of relative heights must be done from scratch.  
There is a conjecture on such a generating series in~\cite{KRTW} for a specific case.  

Russell gives a bijection~\cite{Russell_starting_point} 
between CMPP partitions for $k=1$ 
and two-line cylindric partitions~\cite{GesselKrattenthaler} 
in conjunction with~\cite{JKS}.  
There is also an almost canonical bijection between the said objects per~\cite{KOS}.  
CMPP partitions for $k=1$ can be regarded in terms of two-line cylindric partitions.  
They are two line cylindric partitions in which none of the slices repeat, 
and none of the slices share the same first or the second line.  
In this sense, CMPP partitions for $k = 1$ are reminiscent of 
Rogers-Ramanujan identities~\cite{RR}.  
For $k > 1$, this connection is lost.  
Because \emph{incompatible} slices are forced to appear together.  
The connection to cylindric partitions in the $k>2$, 
if it exists, must be subtle.  
Warnaar [private communication] maintains that this connection does not exist
for representation theoretic reasons.  

The lattice paths in~\cite{FW} are not related to 
the upper envelopes formed by drawing the legs of parts 
in the CMPP partitions.  
In fact, the uppoer envelopes used in this paper 
are not always connected.  

Last, but not least, 
one wonders if it possible to adjust Russell's functional equations 
to include the series in the missing cases, as well.  

\section*{Acknowledgements} 

We thank Shashank Kanade and Matthew Russell for useful discussions 
during the preparation of the manuscript, 
and for mentioning~\cite{FW} and~\cite{KRTW}.  
We also thank S. Ole Warnaar for correcting many inaccuracies 
and pointing out~\cite{GOW} and~\cite{Warnaar25}.  

\bibliographystyle{amsplain}

\begin{thebibliography}{10}

\bibitem{A_RRG}
Andrews, G.E., 1966. 
An analytic proof of the Rogers-Ramanujan-Gordon identities. 
\emph{American Journal of Mathematics}, {\bf 88(4)}, pp.844--846.

\bibitem{AG}
Andrews, G.E., 1974. 
An analytic generalization of the Rogers-Ramanujan identities for odd moduli. 
\emph{Proceedings of the National Academy of Sciences}, {\bf 71(10)}, pp.4082--4085.

\bibitem{Bressoud79}
Bressoud, D.M., 1979. 
A generalization of the Rogers-Ramanujan identities for all moduli. 
\emph{Journal of Combinatorial Theory, Series A}, {\bf 27(1)}, pp.64--68.

\bibitem{Bressoud80}
Bressoud, D.M., 1980. 
An analytic generalization of the Rogers-Ramanujan identities with interpretation. 
\emph{The Quarterly Journal of Mathematics}, {\bf 31(4)}, pp.385--399.

\bibitem{A_thebluebook} 
Andrews, G.E., 1998. 
\emph{The theory of partitions (No. 2)}. 
Cambridge university press.

\bibitem{CMPP_original}
Capparelli, S., Meurman, A., Primc, A. and Primc, M., 2022. 
New partition identities from $C^{(1)} _\ell$-modules. 
\emph{Glasnik matemati\v{c}ki}, {\bf 57(2)}, pp.161--184. 

\bibitem{DK}
Dousse, J. and Konan, I., 2022. 
Characters of level $1$ standard modules of $C_n^{(1)}$ 
as generating functions for generalised partitions. 
arXiv preprint arXiv:2212.12728.

\bibitem{FW}
Foda, O. and Welsh, T.A., 2016. 
Cylindric partitions,$\mathcal{w}_r$ characters and the Andrews–Gordon–Bressoud identities. 
\emph{Journal of Physics A: Mathematical and Theoretical}, {\bf 49(16)}, p.164004.

\bibitem{GR} 
Gasper, G. and Rahman, M., 2011. 
\emph{Basic hypergeometric series (Vol. 96)}. 
Cambridge university press.

\bibitem {GesselKrattenthaler}  
Gessel, I.M. and Krattenthaler, C., 
Cylindric partitions, 
\emph{Trans. Amer. Math. Soc.}, {\bf 349(2)}:429--479, 1997. 

\bibitem{GOW}
Griffin, M.J., Ono, K. and Warnaar, S.O., 2016. 
A framework of Rogers-Ramanujan identities and their arithmetic properties. 
\emph{Duke Mathematical Journal}, {\bf 165(8)}, pp.1475--1527.

\bibitem{RRG}
Gordon, B., 1961. 
A combinatorial generalization of the Rogers-Ramanujan identities. 
\emph{American Journal of Mathematics}, {\bf 83(2)}, pp.393--399.

\bibitem{JKS} 
Jing, N.,Misra K.C., and Savage, C.D.,  2001.  
On multi-color partitions and the generalized Rogers–Ramanujan identities. 
\emph{Communications in Contemporary Mathematics}, {\bf 3(4)}, pp.533--548.

\bibitem{KLRS}
Kanade, S., Lepowsky, J., Russell, M.C. and Sills, A.V., 2017. 
Ghost series and a motivated proof of the Andrews–Bressoud identities. 
\emph{Journal of Combinatorial Theory, Series A}, {\bf 146}, pp.33--62.

\bibitem{KRTW}
Kanade, S., Russell, M.C., Tsuchioka, S. and Warnaar, S.O., 2024. 
Remarks on the conjectures of Capparelli, Meurman, Primc and Primc. 
arXiv preprint arXiv:2404.03851. 

\bibitem{KOS}
Kur\c{s}ung\"{o}z, K. and \"{O}mr\"{u}uzun Seyrek, H., 2025. 
A decomposition of cylindric partitions and cylindric partitions into distinct parts. 
\emph{European Journal of Combinatorics}, {\bf 130}, p.104219.

\bibitem{LW}
Lepowsky, J. and Wilson, R.L., 1985. 
The structure of standard modules: II. The case $A^{(1)}_1$, principal gradation. 
\emph{Inventiones mathematicae}, {\bf 79(3)}, pp.417--442.

\bibitem{PS16}
Primc, M. and \v{S}iki\'{c}, T., 2016. 
Combinatorial bases of basic modules for affine Lie algebras $C^{(1)}_n$. 
\emph{Journal of mathematical physics}, {\bf 57(9)}.

\bibitem{PS19}
Primc, M. and \v{S}iki\'{c}, T., 2019. 
Leading terms of relations for standard modules of the affine Lie algebras $C^{(1)}_n$. 
\emph{The Ramanujan Journal}, {\bf 48(3)}, pp.509--543. 

\bibitem{PT25}
Primc, M. and Trupčević, G., 2025. 
Linear independence for $C^{(1)}_\ell$ by using $C^{(1)}_{2 \ell}$. 
\emph{Journal of algebra}, {\bf 661}, pp.341--356.

\bibitem{RR} 
Ramanujan, S. and Rogers, L.J., 1919. 
Proof of certain identities in combinatory analysis. 
In \emph{Proc. Cambridge Philos. Soc} {\bf 19(214--216)}, p. 3.

\bibitem{Russell_starting_point}
Russell, M.C., 2026. 
Companions to the Andrews-Gordon and Andrews-Bressoud 
Identities and Recent Conjectures of Capparelli, Meurman, Primc, and Primc. 
\emph{SIGMA} {\bf 22(046)}, 38 pages.  
(Contribution to the \emph{Special Issue on 
Recent Advances in Vertex Operator Algebras in honor of James Lepowsky})


\bibitem{T18}
Trup\v{c}evi\'{c}, G., 2018. 
Bases of standard modules for affine Lie algebras of type. 
\emph{Communications in algebra}, {\bf 46(8)}, pp.3663--3673.

\bibitem{Warnaar23}
Warnaar, S.O., 2023. 
The $A_2$ Andrews–Gordon identities and cylindric partitions. 
Transactions of the American Mathematical Society, Series B, 10(22), pp.715-765.

\bibitem{Warnaar25}
Warnaar, S.O. Affine Jacobi-Trudi formulas 
and $q, t$-Rogers-Ramanujan identities. 
arXiv preprint arXiv:2511.17034. 2025 Nov 21.

\end{thebibliography}

\end{document}